\title[\texorpdfstring{$\sl(2)$}{sl(2)}-type singular fibres]{\texorpdfstring{$\sl(2)$}{sl(2)}-type singular fibres of the symplectic and odd orthogonal Hitchin system
}
\author{Johannes Horn}
\begin{document}

\begin{abstract} We define and parametrize so-called $\sl(2)$-type fibres of the \linebreak $\Sp(2n,\C)$- and $\SO(2n+1,\C)$-Hitchin system. These are (singular) Hitchin fibres, such that spectral curve establishes a two-sheeted covering of a second Riemann surface $Y$. This identifies the $\sl(2)$-type Hitchin fibres with fibres of an $\SL(2,\C)$- respectively $\PSL(2,\C)$-Hitchin map on $Y$.

Building on results of \cite{Ho1}, we give a stratification of these singular spaces by semi-abelian spectral data, study their irreducible components and obtain a global description of the first degenerations. We will compare the semi-abelian spectral data of $\sl(2)$-type Hitchin fibres for the two Langlands dual groups. This extends the well-known Langlands duality of regular Hitchin fibres to $\sl(2)$-type Hitchin fibres. Finally, we will construct solutions to the decoupled Hitchin equation for $\sl(2)$-type fibres of the symplectic and odd orthogonal Hitchin system. We conjecture these to be limiting configurations along rays to the ends of the moduli space. 
\end{abstract}

\maketitle


\section{Introduction}\label{chap:intro}

For more than thirty years, the study of moduli spaces of Higgs bundles is a very active research area located at the crossroads of algebraic, complex and differential geometry with the theory of integrable systems and surface group representations. One major reason for the ongoing interest in these moduli spaces is their extremely rich geometry. They were introduced by Hitchin \cite{Hi87a} as examples of non-compact hyperkähler spaces. They are homeomorphic to moduli spaces of flat $G$-bundles on $X$ by the famous non-abelian Hodge correspondence \cite{Hi87a,Do87,Si88,Co88}. And most importantly for the present work, they have a dense subset carrying the structure of an algebraically completely integrable system - the so-called Hitchin system \cite{Hi87b}. 



By definition, the Higgs bundle moduli space $\M_G$ on a Riemann surface $X$ associated to a complex reductive linear group $G$ is a moduli space of pairs $(E,\Phi)$. Here $E$ is a holomorphic $G$-vector bundle on $X$ and $\Phi$ is holomorphic one-form valued in $\g$, called the Higgs field. $\M_G$ has a complex symplectic structure on its smooth locus and 
the Hitchin map 
\[ \H_G : \M_G \rightarrow B_G.
\] defines a proper, surjective, holomorphic map to a complex vector space $B_G$ of half the dimension of $\M_G$, referred to as the Hitchin base. 
Hitchin showed for the classical groups \cite{Hi87b} and Scognamillo for all complex reductive groups \cite{Sc98}, that on a dense subset $B_G^\reg \subset B_G$ the fibres of the Hitchin map are torsors over abelian varieties. Thereby, the pre-image of the regular locus $B_G^\reg$ under the Hitchin map is an algebraically completely integrable system, nowadays called the Hitchin system. 

To identify the Hitchin fibers over the regular locus with abelian varieties one introduces spectral data. The Hitchin map applied to a Higgs bundle $(E,\Phi)$ computes the eigenvalues of the Higgs field $\Phi$. These eigenvalues are decoded in the spectral curve $\Sigma$, a covering of the original Riemann surface $X$. The eigenspaces determine a line bundle on the spectral curve. For a point in the regular locus $B_G^\reg$ the spectral curve is smooth. In this case, the moduli spaces of eigen line bundles are the classical examples of abelian varieties, most importantly Jacobian and Prym varieties. 

The Hitchin fibration played a major role in two recent developments in the theory of Higgs bundle moduli spaces: Firstly, in the study of the asymptotic of the hyperkähler metric \cite{MSWW19} and secondly, in the Langlands duality of Higgs bundle moduli spaces \cite{DoPa12}. Both results were considered on the regular locus of the Hitchin map and it is an interesting question how they extend to the singular locus (see \cite{AEFS18}). In this paper, we do the first steps in this direction. 


%

\subsection*{Singular Hitchin fibres of \texorpdfstring{$\sl(2)$}{sl(2)}-type}
We introduce and study the class of $\sl(2)$-type Hitchin fibres of the $\Sp(2n,\C)$- and $\SO(2n+1,\C)$-Hitchin system. This class of (singular) Hitchin fibres is distinguished by the singularities of the spectral curve, such that for $n=2$ all fibres are of $\sl(2)$-type (see Definition \ref{def:sl(2)-type_Sp} resp. \ref{def:sl(2)-type_SO} for precise definitions). For $\SL(2,\C)$, the singular Hitchin fibres were studied in \cite{Sch98,GO13} using the Beauville-Narasimhan-Ramanan correspondence \cite{BNR89}. In \cite{Ho1}, the author developed a more direct approach introducing semi-abelian spectral data. These consist of an abelian torsor over the Prym variety of the normalised spectral curve and non-abelian coordinates parametrising local deformations of the Higgs bundle at the singularities of the spectral curve.

For Hitchin fibres of $\sl(2)$-type the spectral curve $\Sigma$ defines a two-sheeted covering over another Riemann surface $Y$. The main result of this work identifies the $\Sp(2n,\C)$- and $\SO(2n+1,\C)$-Hitchin fibres of $\sl(2)$-type with $\SL(2,\C)$- respectively $\PGL(2,\C)$-Hitchin fibres of a moduli space of twisted Higgs bundles on $Y$ (Theorem \ref{Theo:isom:Hitchinfibres} and \ref{theo:so:biholo}). This allows to extend the results of \cite{Ho1} to $\sl(2)$-type Hitchin fibres.

\begin{theo}[Theorem \ref{theo:Sp(2n):strat}, \ref{theo:so(2n+1):fibreing}]\label{theo:intro1} Let $G=\Sp(2n,\C)$ or $G=\SO(2n+1,\C)$. Let $b \in B_G$ with irreducible and reduced spectral curve of $\sl(2)$-type. Then there exists a stratification 
\[ \H_G^{-1}(b) = \bigsqcup_{i \in I} \S_i
\] by finitely many locally closed subsets $\S_i$, such that every stratum $\S_i$ is a finite-to-one covering of a $(\C^*)^{r_i} \times \C^{s_i}$-bundle over an abelian torsor $T(b)$. 
\end{theo}
When the spectral curve $\Sigma$ is smooth, it is of $\sl(2)$-type. Then the stratification is trivial and this result gives a new approach to the identification of regular fibres of the symplectic and odd orthogonal Hitchin system with abelian torsors originally obtained in \cite{Hi07}.

The abelian torsor parametrises the eigen line bundles of $(E,\Phi) \in \H^{-1}_G(b)$ and will be referred to as the abelian part of the spectral data. The $(\C^*)^{r_i} \times \C^{s_i}$-fibres, the non-abelian part of the spectral data, parametrize Hecke transformations of the Higgs bundle at the singularities of the spectral curve. 


The stratification of Theorem \ref{theo:intro1} contains a unique, open and dense stratum $\S_0 \subset \H^{-1}_G(b)$. This dense stratum is compactified by lower dimensional strata distinguished from $\S_0$ by a lower dimensional moduli space of Hecke parameters. For the unique closed stratum of lowest dimension, this parameter space is a point and hence this stratum is an abelian torsor. 

In the second part of \cite{Ho1}, it was studied how the fibres glue together to form the singular Hitchin fibre. Let us describe the first degeneration in more detail. For $G=\SL(2,\C)$, the Hitchin base is the vector space of quadratic differentials $H^0(X,K_X^2)$. In this setting, the examples we want to consider are Hitchin fibres over a quadratic differential $q \in H^0(X,K_X^2)$ with a single zero of order $2$, such that all other zeros are simple. For $\Sp(2n,\C)$ and $\SO(2n+1,\C)$, there are singular Hitchin fibres like this, for all $n \in \N$. 

In this example, we have two strata each isomorphic to a $(\C^*)^{r_i} \times \C^{s_i}$-bundle over the abelian torsor $T(q)$ with exponents given by
\[ \S_0: (r_0=1,s_0=0), \quad \S_1: (r_1=0,s_1=0).
\] In Figure \ref{fig:singfibre2}, we sketched the situation by compressing the abelian part of the spectral data to a circle. On the left hand side, we see a sketch of the open and dense stratum $\S_0$, where the $\C^*$-fibres are depicted by little tunnels. We obtain the singular Hitchin fibre by gluing the two missing points of the $\C^*$-fibre to the abelian torsor in a twisted way. Indeed, the Higgs bundles corresponding to the points zero and infinity do not have the same eigen line bundle and hence do not correspond to the same point on the abelian torsor. In particular, the fibring over the abelian torsor does not extend to $\Hit^{-1}(q)$. 
This example can be also found in \cite{GO13,HiCritical} for the $\SL(2,\C)$-case. 
More generally, we will give a global description of the first degenerations up to normalisation in examples \ref{exam:sp(2n,C)_first_deg} and \ref{exam:so(2n+1,C)_first_deg}. 
\begin{figure}[h]
\includegraphics[scale=0.21]{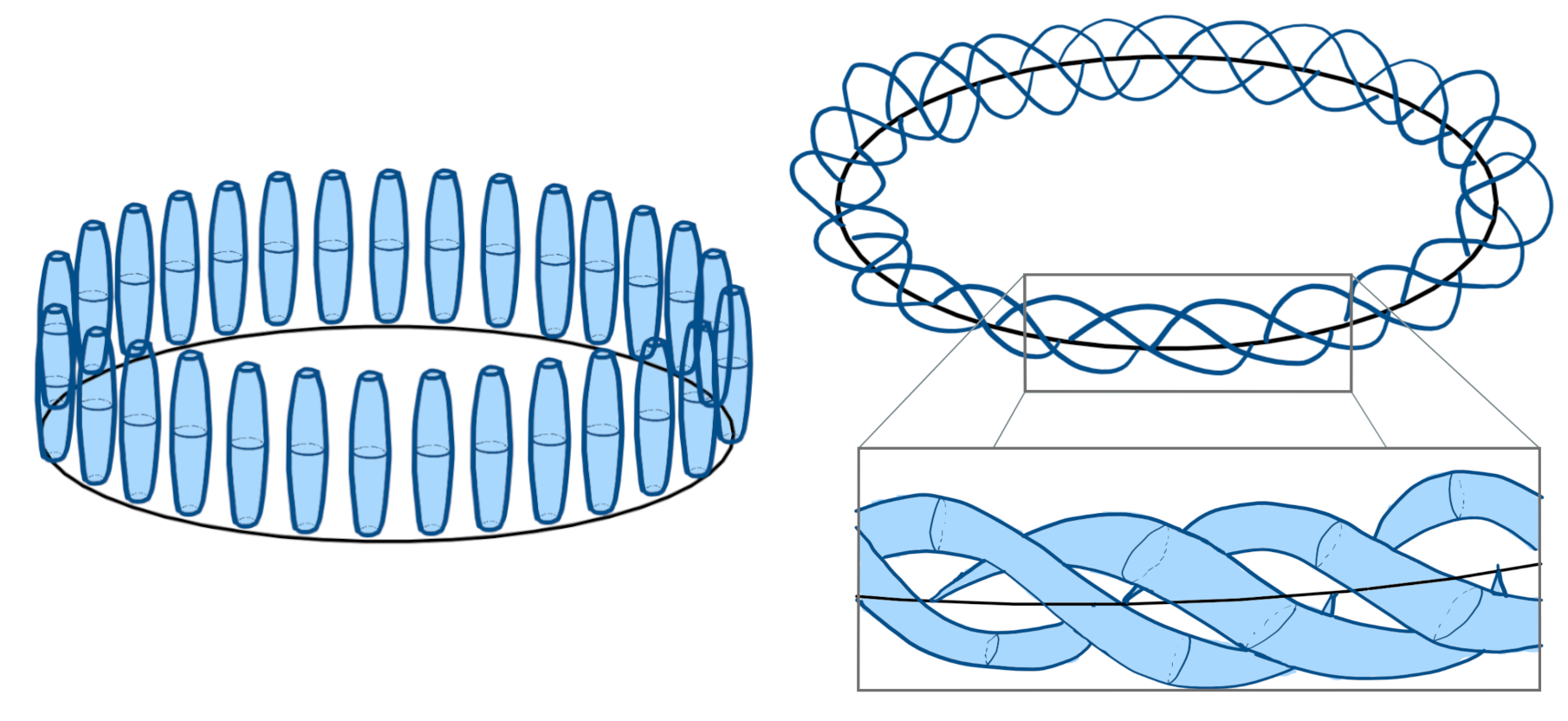}
\caption{Twisted $\P^1$-bundle over an abelian torsor \label{fig:singfibre2}}
\end{figure}

\subsection*{Towards Langlands duality for singular Hitchin fibres}
The Langlands duality of Higgs bundle moduli spaces is a reincarnation of mirror symmetry and its geometric interpretation in terms of integrable systems by the Strominger-Yau-Zaslow conjecture \cite{SYZ}. 
%
For Hitchin systems, mirror symmetry is connected to another important duality in pure mathematics - the so-called Langlands duality. For a algebraic group $G$ there exists a Langlands dual group $G^L$, such that conjecturally the representation theory of $G$ is controlled by Galois representations into $G^L$. 
%

Starting from the work of Hausel and Thaddeus \cite{HaTh03} for $G=\SL(n,\C)$, $G^L=\PSL(n,\C)$ and Hitchin \cite{Hi07} for $G=\Sp(2n,\C)$, $G^L=\SO(2n+1,\C)$ and $G=G^L=\mathsf{G}_2$, Donagi and Pantev \cite{DoPa12} established the following formulation of Langlands duality of $G$-Hitchin systems for a complex semi-simple Lie group $G$: 
\begin{itemize}
\item[i)] The Hitchin bases $B_G$ and $B_{G^L}$ are isomorphic and the isomorphism restricts to the regular loci $B_G^\reg$ and $B_{G^L}^\reg$.
\item[ii)] The regular fibres over corresponding points $b \in B_G^\reg$ and $b'\in B_{G^L}^\reg$ are abelian torsors over dual abelian varieties. 
\end{itemize}

Concerning Langlands duality for singular Hitchin fibres of $\sl(2)$-type, we have to take a closer look at the abelian part of the spectral data. For $G=\Sp(2n,\C)$, the spectral curve $\Sigma$ has an involutive deck transformation $\sigma: \Sigma \rightarrow \Sigma$. 
\begin{wrapfigure}{r}{0.4\textwidth} \vspace{-0.5cm}
\begin{tikzpicture}\matrix (m) [matrix of math nodes,row sep=2em,column sep=3em,minimum width=2em]
  { & & \tilde{\Sigma} \\
    \Sigma/\sigma  & \Sigma & \\
    & X & \\
  };
  \path[-stealth]
  	(m-1-3) edge (m-2-2)
  			edge  (m-2-1)
  			edge  (m-3-2)
    (m-2-2) 
           edge  (m-2-1)
           edge  (m-3-2)
    (m-2-1) edge (m-3-2);
\end{tikzpicture} \caption{Commutative diagram of spectral curves  \label{fig:spectralcurves}}
\end{wrapfigure}
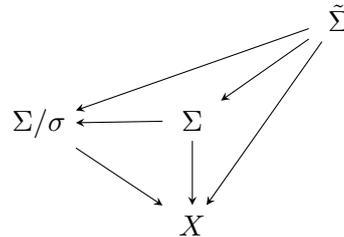 
The quotient defines a complex algebraic curve $\Sigma/\sigma$. Together with the normalised spectral curve $\tilde{\Sigma}$, we obtain the commutative diagram of spectral curves in Figure \ref{fig:spectralcurves}. 

By definition the spectral curve $\Sigma$ is of $\sl(2)$-type if and only if $\Sigma/\sigma$ is smooth. In this case, there is an abelian variety associated to the 2-sheeted branched covering of Riemann surfaces $\tilde{\Sigma} \rightarrow \Sigma/\sigma$, the so-called Prym variety. The abelian part of the spectral data for $G=\Sp(2n,\C)$ is a torsor over this Prym variety. 

For $G= \SO(2n+1,\C)$, the abelian part of the spectral data is a union of torsors over a quotient of the Prym variety by the finite group $\Z_2^{2g}$, where $g$ is the genus of $X$. This quotient can be identified with the dual abelian variety. We obtain the following formulation of Langlands correspondence for singular Hitchin fibres of $\sl(2)$-type.

\begin{coro}[Corollary \ref{coro:duality}]  Let $b \in B_{\Sp(2n,\C)}=B_{\SO(2n+1,\C)}$ be of $\sl(2)$-type, such that the spectral curve is irreducible and reduced. Then the Hitchin fibres $\H_{\Sp(2n,\C)}^{-1}(b)$ and $\H_{\SO(2n+1,\C)}^{-1}(b)$ are related as follows: 
\begin{itemize}
\item[i)] The abelian parts of the spectral data are unions of torsors over dual abelian varieties.
\item[ii)] The parameter spaces of Hecke transformations are isomorphic. 
\end{itemize}
\end{coro}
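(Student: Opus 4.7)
The plan is to reduce the corollary to the classical Langlands duality between $\SL(2,\C)$ and $\PGL(2,\C)$ on the quotient curve $Y := \Sigma/\sigma$. By Theorems~\ref{Theo:isom:Hitchinfibres} and~\ref{theo:so:biholo} the fibres $\H^{-1}_{\Sp(2n,\C)}(b)$ and $\H^{-1}_{\SO(2n+1,\C)}(b)$ are identified with twisted $\SL(2,\C)$- and $\PGL(2,\C)$-Hitchin fibres on $Y$, both having $\tilde{\Sigma} \to Y$ as their spectral covering. This places the two problems on the same geometric data on $Y$, so the stratifications of Theorems~\ref{theo:Sp(2n):strat} and~\ref{theo:so(2n+1):fibreing} are governed by the same combinatorics.

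For part~(i), I would identify the abelian piece $T(b)$ of each stratum explicitly in terms of equivariant line bundles on $\tilde{\Sigma}$. On the $\SL(2,\C)$-side (hence for $\Sp(2n,\C)$) the abelian part is a torsor over the Prym variety $P := \mathrm{Prym}(\tilde{\Sigma}/Y)$. On the $\PGL(2,\C)$-side (hence for $\SO(2n+1,\C)$) the spectral datum is an $\SL(2,\C)$-datum modulo the $2$-torsion subgroup $\Gamma \subset P$ induced by $J(Y)[2]$, so that the full abelian part decomposes as a disjoint union of torsors over $P/\Gamma$ labelled by a topological invariant in $H^2(Y,\Z_2)$. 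That $P$ and $P/\Gamma$ are dual as polarised abelian varieties — a direct consequence of the $(1,\ldots,1,2,\ldots,2)$-polarisation of the Prym — then gives (i).

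For part~(ii), I would argue that the Hecke parameter spaces $(\C^*)^{r_i} \times \C^{s_i}$ appearing in Theorem~\ref{theo:intro1} are determined by the local analytic type of $\tilde{\Sigma} \to Y$ at the singular locus of $\Sigma$ together with the shared Lie algebra $\sl(2,\C)$. Since both sides are built from the same covering $\tilde{\Sigma} \to Y$, the parameter spaces match termwise under the identifications of Step~1.

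The main obstacle I anticipate is of a bookkeeping nature in Step~2: pinning down the precise $2$-torsion subgroup $\Gamma \subset P$, the twist line bundle and degree shifts defining the two torsors, and the compatibility of the $H^2(Y,\Z_2)$-labelling with the topological invariants of the original $\SO(2n+1,\C)$-Higgs bundles. Once this dictionary is in place, both assertions reduce to the standard $\SL(2,\C)/\PGL(2,\C)$ Langlands duality, now transported to the spectral geometry over $Y$.
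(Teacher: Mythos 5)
Your proposal follows essentially the same route as the paper: reduce to the rank-one $\SL(2,\C)$/$\PGL(2,\C)$ duality on $\Sigma/\sigma$ via Theorems \ref{Theo:isom:Hitchinfibres} and \ref{theo:so:biholo}, identify the abelian parts as Prym torsors and their quotients by the $2$-torsion of $\Jac(\Sigma/\sigma)$ (the paper invokes \cite{HaTh03} for $\Prym(\pi)^\vee\cong\Prym(\pi)/\Jac[2]$, which is the same polarisation fact you cite), and note that the Hecke parameters are local at the singularities of $\Sigma$ and unchanged by the $\Jac[2]$-action. This matches the paper's proof of Corollary \ref{coro:duality_rank1} and its transport to higher rank.
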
 


\subsection*{Limiting configurations for singular Hitchin fibres}
Another recent development in the study of Higgs bundle moduli spaces is the analysis of the asymptotic of the hyperkähler metric. Evolving from an intriguing conjectural picture developed by Gaiotto, Moore and Neitzke \cite{GMN13}, it was shown that on the regular locus of the Hitchin map the asymptotic of the hyperkähler metric are described by a so-called semi-flat metric \cite{MSWW19,Fr18b,FMSW20}. 
This is a hyperkähler metric defined on any algebraically completely integrable system by the theory of special Kähler manifolds \cite{Fr99}. It does not extend over the singular locus, but Gaiotto, Moore and Neitzke suggest that it can be modified to define a hyperkähler metric on $\M_G$. Recent progress in this direction can be found in \cite{Tu19}. 

As a first step in analysing the asymptotic of the hyperkähler metric, Fredrickson, Mazzeo, Swoboda, Weiss and Witt studied limits of solutions to the Hitchin equation along rays to the ends of the moduli space \cite{MSWW16,Mo16,Fr18a}. It was shown in \cite{MSWW14,Fr18a}, that these so-called limiting configurations satisfy a decoupled version of the Hitchin equation and are completely determined by spectral data. In Theorem \ref{theo:limit:sl2type}, we will use the semi-abelian spectral data explained above to construct solutions to the decoupled Hitchin equation for $\sl(2)$-type fibres of the symplectic and odd orthogonal Hitchin system. We conjecture them to be limiting configurations. For $\SL(2,\C)$, this is a theorem by Mochizuki \cite{Mo16}.

\subsection*{Reader's guide}
The paper is structured into four sections. In Section \ref{sec:SP2nC}, we will introduce $\sl(2)$-type Hitchin fibres of the symplectic Hitchin system. We prove the identification of these Hitchin fibres with $\SL(2,\C)$-Hitchin fibres on $\Sigma/\sigma$ and give the parametrisation by semi-abelian spectral data using the results of \cite{Ho1}. In Section \ref{sec:SO}, we repeat these considerations for the odd orthogonal group. 

Summing up, we formulate the Langlands correspondence for $\Sp(2n,\C)$- and $\SO(2n+1,\C)$-Hitchin fibres of $\sl(2)$-type in Section \ref{sec:langlands}. Finally, in Section \ref{sec:lim}, we will construct solutions to the decoupled Hitchin equation and motivate why we conjecture theses to be limiting configurations.

\subsection*{Acknowledgement} This paper represents a substantial part of the author's PhD thesis. Thank you to Daniele Alessandrini for a great supervision and many useful comments and questions during the preparation of this work. Thank you to Anna Wienhard, Beatrice Pozzetti and Xuesen Na for enlightening conversations and their interest in this project. We thank the anonymous referee for a careful reading of the manuscript.

This work was supported by the Deutsche Forschungsgemeinschaft (DFG, German Research Foundation) [281869850 (RTG 2229)]; the Klaus Tschira Foundation; and the U.S. National Science Foundation [DMS 1107452, 1107263, 1107367 "RNMS: GEometric structures And Representation varieties" (the GEAR Network)].


\section{\texorpdfstring{$\sl(2)$}{sl(2)}-type fibers of symplectic Hitchin systems}\label{sec:SP2nC}
\subsection{The \texorpdfstring{$\Sp(2n,\C)$}{Sp(2n,C)}-Hitchin system}\label{sec:sp(2n,C)-Hit}
Let $X$ be a Riemann surface of genus $g \geq 2$ and let $M$ denote a holomorphic line bundle on $X$ with $\deg(M) > 0$. 
\begin{defi} An $M$-twisted $\Sp(2n,\C)$-Higgs bundle is a triple $(E,\Phi,\omega)$ of a 
\begin{itemize}
\item[i)] holomorphic vector bundle $E$ of rank $2n$, 
\item[ii)] an anti-symmetric bilinear form $\omega \in H^0(X,\bigwedge ^2 E^\vee)$, such that $\omega^{\wedge n} \in H^0(X,\det(E^\vee))$ is  non-vanishing, and
\item[iii)] $\Phi \in H^0(X,\End(E) \otimes M)$, such that $\omega(\Phi \, \cdot, \cdot)=-\omega(\cdot,\Phi\, \cdot)$. 
\end{itemize}
\end{defi}

\begin{theo}[Simplified stability condition \cite{GGM09}]
A $\Sp(2n,\C)$-Higgs bundle $(E,\Phi,\omega)$ is stable, if for all isotropic $\Phi$-invariant subbundles $0 \neq F \subsetneq E$
\[ \deg(F) < 0 .
\] 
\end{theo}

\noindent Let $\M_{\Sp(2n,\C)}(X,M)$ denote the moduli space of stable $M$-twisted $\Sp(2n,\C)$-Higgs bundles on $X$. This is a complex algebraic variety (see \cite{GGM09,Sch08}). For $M=K_X$, it is a complex symplectic manifold of dimension 
\[ (2g-2)(2n^2+n).
\] Let $A \in \sp(2n,\C)$. The characteristic polynomial of $A$ is of the form
\[ T^{2n}+a_2(A) T^{2n-2} + \dots + a_{2n}(A) \in \C[T].
\] The coefficients $(a_2,\dots,a_{2n})$ are homogeneous generators of $\C[\g]^G$ and the associated Hitchin map is given by
\begin{align*} \H_{\Sp(2n,\C)}: \quad \M_{\Sp(2n,\C)}(X,M) &\rightarrow B_{2n}(X,M):=\bigoplus\limits_{i=1}^n H^0(X,M^{2i}), \\
(E,\Phi) \quad &\mapsto \quad (a_2(\Phi), \dots , a_{2n}(\Phi)).
\end{align*} This is proper, surjective, flat, holomorphic map \cite{Ni91,Si95II}. For $M=K_X$, the Hitchin map restricted to a dense subset $B^{\mathsf{reg}}_{2n} \subset B_{2n}$ defines an algebraically completely integrable system \cite{Hi87b,Hi07}.

The characteristic equation of $(E,\Phi) \in \H_{\Sp(2n,\C)}^{-1}(a_2,\dots,a_{2n})$ is given by
\[ \eta ^{2n} + a_2\eta^{2n-2} + \dots + a_{2n} =0. 
\] Let $p_M: M \rightarrow X$ the bundle map, then $\eta$ can be interpreted as the tautological section $\eta: M \rightarrow p_M^*M$. The pointwise eigenvalues of the Higgs field form the a complex analytic curve
\[ \Sigma:=Z_{M}(\eta^{2n}+p_M^*a_2\eta^{2n-2}+\dots +p_M^*a_{2n}) \subset \mathsf{Tot}M.
\] This is the so-called spectral curve.
The projection $p_M$ restricts to a $2n$-sheeted branched analytic covering $\pi: \Sigma \rightarrow X$. Recall that in general the spectral curve is singular at the points, where different sheets meet. Due to the specific type of characteristic equation the spectral curve comes with an involutive automorphism $\sigma: \Sigma \rightarrow \Sigma$ reflecting in the zero section of $M$.

For $M=K_X$, the regular locus $B_{2n}^{\mathsf{reg}}$ is the subset of the Hitchin base, where the spectral curve $\Sigma$ is smooth. The fibres over $B_{2n}^{\mathsf{reg}}$ are torsors over the Prym variety 
\[ \Prym(\Sigma \rightarrow \Sigma/\sigma)
\] (see \cite{Hi87b,Hi07}). We will reprove this result in Theorem \ref{theo:Sp(2n):strat}.


The regular locus can be detected by the $\sp(2n,\C)$-discriminant. Consider the representation of $\sp(2n,\C)$ 
\[ \left\{ A \in \Mat(2n \times 2n,\C) \mid A^\tr J_{2n} + J_{2n}A=0 \right\}, \text{ where } J_{2n}=\begin{pmatrix} 0 & \id_n \\ -\id_n & 0
\end{pmatrix}.
\] A Cartan subalgebra is given by 
\[ \h= \left\{ H=\diag(h_1, \dots, h_n, -h_n, \dots, -h_1) \mid h_i \in \C
 \right\}.
\] 
Define $e_i \in \h^\vee$ by $e_i(H)=h_i$. Then a root system is given by 
\[ \Delta=\{ \pm e_i \pm e_j \mid 1\leq i<j \leq n \} \cup \{ \pm 2e_i \mid 1 \leq i \leq n \}.
\] The $\sp(2n,\C)$-discriminant is the invariant polynomial defined by product over all roots
\[ \disc_{\sp}:=\prod\limits_{\alpha \in \Delta} \alpha \in \C[\h]^W \cong \C[\g]^G. 
\] There are two types of roots differing by their length. The roots $\pm 2e_i$ have $\sqrt{2}$ times the length of the roots $\pm e_i \pm e_j$ (as depicted in the Dynkin diagram). The Weyl group $W$ preserves the inner product on $\h$ and hence the set of long/short roots. Therefore, we can define invariant polynomials in $\C[\g]^G$ by the product over the long/short roots. The product over the long roots $\prod_{i=1}^{n} -4e_i^2$ is (up to a scalar) the determinant function on $\h$. We refer to the product over the short roots as the reduced $\sp(2n,\C)$-discriminant
\[ \disc_{\sp}^{\textsf{red}}:=\prod_{i <j } -(e_i\pm e_j)^2.
\] We have
 \[ \disc_{\sp}=\det\disc_{\sp}^{\textsf{red}}.
 \]
The discriminant of a Higgs bundle $(E,\Phi) \in \M_{\Sp(2n,\C)}(X,M)$ is the section 
\[ \disc_{\sp}(\Phi) \in H^0(X,M^{2n^2}).
\] Being invariant polynomials $\disc_{\sp}$ and $\disc_{\sp}^{\textsf{red}}$ factor through the Hitchin map. For $\underline{a} \in B_{2n}(X,M)$, we will write $\disc_{\sp}(\underline{a})$ and $\disc_{\sp}^{\textsf{red}}(\underline{a})$ for the (reduced) discriminant computed in this manner. 
 
\begin{lemm}\label{lem:sp:discr} If all zeros of $\disc_{\sp}(\underline{a}) \in H^0(X,M^{2n^2})$ are simple, then the spectral curve is smooth.
\end{lemm}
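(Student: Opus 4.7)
The plan is to exploit the factorisation
\[
\disc_{\sp}(\underline{a}) \;=\; c \cdot \disc_{\sp}^{\textsf{red}}(\underline{a}) \cdot a_{2n}, \quad c \in \C^*,
\]
already implicit above: the long-root product $\det = \prod_{i} -4e_i^2$ is proportional to $a_{2n}$ since $a_{2n}=\det\Phi=\pm\prod h_i^2$. I would then identify $\disc_{\sp}^{\textsf{red}} = \prod_{i<j}(e_i^2-e_j^2)^2$ with the classical $y$-discriminant of the auxiliary polynomial $g(y) := y^n + a_2 y^{n-1} + \dots + a_{2n}$, whose roots are the squared eigenvalues $e_i^2$. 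Note that $f(\eta) = g(\eta^2)$, so the spectral equation factors through the auxiliary curve $Z : g(z,y) = 0$ via the branched double cover $(z,\eta) \mapsto (z, \eta^2)$, which is a local biholomorphism away from $\eta = 0$.

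From the simple-zero hypothesis one then obtains the following dichotomy at any $x_0 \in X$ with $\disc_{\sp}(\underline{a})(x_0) = 0$: exactly one of $\disc_y(g)$ or $a_{2n}$ vanishes to order $1$ at $x_0$, and the other is non-zero. In the first sub-case ($\disc_y(g)$ simple zero, $a_{2n}(x_0) \neq 0$) no eigenvalue is zero at $x_0$, and a standard local analysis (see below) shows $Z$ smooth at its unique double root $(x_0, y_0)$ with $y_0 \neq 0$; smoothness transfers to $\Sigma$ at $(x_0, \pm\sqrt{y_0})$ via $\eta \mapsto \eta^2$, while at the remaining points of $\Sigma$ over $x_0$ the partial $\partial_\eta f$ is already non-zero. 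In the second sub-case ($a_{2n}$ simple zero, $\disc_y(g)(x_0) \neq 0$), the polynomial $g(x_0,\cdot)$ has $n$ distinct roots one of which is $y=0$, so the only candidate singular point of $\Sigma$ above $x_0$ is $(x_0, 0)$; but the local form
\[
f(z,\eta) \;=\; a_{2n}(z) + a_{2n-2}(z)\,\eta^2 + O(\eta^4)
\]
gives $\partial_z f(x_0, 0) = a_{2n}'(x_0) \neq 0$, so $\Sigma$ is smooth at that point as well. Finally, above any $x$ with $\disc_{\sp}(\underline{a})(x) \neq 0$ the polynomial $f(x,\cdot)$ has $2n$ distinct roots and $\Sigma$ is trivially smooth.

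The only substantive step is verifying smoothness of $Z$ at $(x_0, y_0)$ in the first sub-case, and this I expect to be the main (but very mild) obstacle. I would do it by Weierstrass preparation isolating the quadratic factor of $g$ near $(x_0, y_0)$ and completing the square to reduce the local equation to the fold $w^2 = \lambda z + O(z^2)$; the simple zero of $\disc_y(g)$ is exactly what guarantees $\lambda \neq 0$, whence smoothness. Everything else is a direct inspection of the spectral equation and of the behaviour of $\eta \mapsto \eta^2$ away from the zero section.
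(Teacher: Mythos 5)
Your proof is correct and follows essentially the same route as the paper: factor $\disc_{\sp}$ as $a_{2n}$ times the reduced discriminant, split into the two sub-cases forced by the simple-zero hypothesis, and verify smoothness at the resulting simple ramification points (one on the zero section when $a_{2n}$ vanishes simply, two at $\pm\lambda\neq 0$ when $\disc_{\sp}^{\textsf{red}}$ does). Your auxiliary curve $Z$ is exactly the reduced spectral curve $\Sigma/\sigma$ introduced later in the paper; the only difference is that the paper asserts smoothness at simple ramification points directly, while you supply the Weierstrass-preparation details.
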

\begin{proof}[Proof of Lemma \ref{lem:sp:discr}] Let $x \in X$ be a simple zero of  
 \[ \disc_{\sp}(\underline{a})= a_{2n}\disc_{\sp}^{\textsf{red}}(\underline{a}) \in H^0(X,M^{2n^2}).
 \] If $a_{2n}$ has a simple zero at $x$ and $\disc_{\sp}^{\textsf{red}}(\underline{a})(x) \neq 0$, then $\pi^{-1}(x) \in \Sigma$ contains a simple ramification point on the zero section. If $\disc_{\sp}^{\textsf{red}}(\underline{a})$ has a simple zero at $x$ and $a_{2n}(x) \neq 0$, then $\pi^{-1}(x) \in \Sigma$ contains two simple ramification points $0 \neq \lambda,-\lambda \in M_x$. Hence, the spectral curve is smooth.
\end{proof}

\begin{exam}[$\Sp(4,\C)$]\label{exam:disc:sp(4,C)} For $(a_2,a_4) \in B_4(X,M)$,
The $\sp(4,\C)$-discriminant is given by
\[ \disc_{\sp}(a_2,a_4)=a_4(a_2^2-4a_4).
\] If instead we compute the discriminant of the characteristic polynomial - the $\sl(4,\C)$-discriminant -, we obtain 
\[ \disc_{\sl(4,\C)}(a_2,a_4)=a_4(a_2^2-4a_4)^2.
\] This expression has higher order zeros for all $(a_2,a_4) \in B_4(X,M)$. Hence, the $\sl(4,\C)$-discriminant can not detect the regular locus of the $\Sp(4,\C)$-Hitchin map.
\end{exam}

\noindent \textbf{Notation} In the following we will often consider a branched covering of Riemann surfaces $p: Y \rightarrow X$. To avoid confusion, we will refer to points in $Y$, where different sheets meet or equivalently zeros of $\del p$ as ramification points and to the images of these points under $p$ as branch points. We denote by $R=\div(\partial p)\in \Div(Y)$ the ramification divisor and refer to its coefficient $R_y$ at a ramification point $y \in Y$ as the ramification index. $B:=\Nm(R) \in \Div(X)$ is referred to as branch divisor.

\vspace*{0.5cm}
\subsection{\texorpdfstring{$\sl(2)$}{sl(2)}-type spectral curves}\label{sec:sl(2)-type}
In this subsection, we will define the class of $\sl(2)$-type fibres of the $\Sp(2n,\C)$-Hitchin map. These Hitchin fibres are distinguished by the singularities of the spectral curve, such that for $G=\SL(2,\C)$ all Hitchin fibres are of $\sl(2)$-type.\\

\noindent \begin{minipage}[c]{0.69\textwidth}
Let $\underline{a} \in B_{2n}(X,M)$, $\Sigma \subset \mathrm{Tot}(M)$ the associated spectral curve and $\sigma$ the involutive biholomorphism reflecting in the zero section of $M$. Being the zero section of a polynomial with coefficients in a line bundle on a Riemann surface, the spectral curve $\Sigma$ is algebraic. The involution $\sigma$ defines an algebraic $\Z_2$-action on $\Sigma$. We will construct its quotient in the algebraic category. A geometric quotient by this action is given by
\[ \pi_2: \Sigma \rightarrow \Sigma/\sigma:= \mathsf{Spec}(\O_\Sigma^{\sigma}), 
\] where $\O_\Sigma^\sigma$ denotes the sheaf of $\sigma$-invariant regular functions on $\Sigma$. As $\pi$ is invariant under the $\Z_2$-action, we obtain the commutative diagram on the right of this paragraph.
\end{minipage}\hfill
\begin{minipage}[c]{0.29\textwidth}
\begin{tikzpicture}\matrix (m) [matrix of math nodes,row sep=3em,column sep=4em,minimum width=2em]
  {
     & \Sigma \\
    \Sigma/\sigma & \\ 
    & X \\
  };
  \path[-stealth]
    (m-1-2) 
           edge node [above] {$\pi_2$ } (m-2-1)
           edge node [right] {$\pi$} (m-3-2)
    (m-2-1) edge node [left] {$\pi_n$}(m-3-2);
\end{tikzpicture}
\end{minipage}
 
\begin{defi}\label{def:sl(2)-type_Sp} 
An element $\underline{a} \in B_{2n}(X,M)$ is called of $\sl(2)$-type, if $\Sigma/\sigma$ is smooth. In this case, $\H_{\Sp(2n,\C)}^{-1}(\underline{a})$ is called $\sl(2)$-type Hitchin fibre. An $\Sp(2n,\C)$-Higgs bundle is called of $\sl(2)$-type, if it is contained in an $\sl(2)$-type Hitchin fibre.
\end{defi}

\begin{exam}
\begin{itemize} 
\item[i)] Let $n=1$. Then $X\cong \Sigma/\sigma$ is smooth for all $a_2 \in H^0(X,M^2)$ and hence all Hitchin fibres are of $\sl(2)$-type.
\item[ii)] A regular point $\underline{a} \in B_{2n}^{\mathsf{reg}}(X,M)$ is of $\sl(2)$-type. In this case, $\Sigma$ is smooth and so is $\Sigma/\sigma$. The fibers are isomorphic to $\Prym( \Sigma \rightarrow \Sigma/\sigma)$, which in turn determines a regular Hitchin fibre of the $\pi_n^*K_X$-twisted $\SL(2,\C)$-Hitchin system on $\Sigma/\sigma$. 
\item[iii)] Consider $n=2$ and $(a_2,a_4) \in B_{4}(X,M)$, such that $\Sigma$ is smooth except of one point $p \in \Sigma$ on the zero section. Assume that the spectral curve is locally at $p$ isomorphic to $Z(y^2-z^2) \subset \C^2$ with $\sigma: \C^2 \rightarrow \C^2, (y,z) \mapsto (-y,z)$. Locally, the quotient $\Sigma/\sigma$ is isomorphic to the affine curve $\mathsf{Spec}((\C[y,z]/(y^2-z^2))^\sigma)$. There is an isomorphism
\[ \left(\C[y,z]/(y^2-z^2)\right)^\sigma \rightarrow \C[w], \quad y^2 \mapsto w^2, z \mapsto w
\] and hence $\Sigma/\sigma$ is smooth at $p$. In conclusion, $(a_2,a_4) \in B_{4}\setminus B_4^{\mathsf{reg}}$ is of $\sl(2)$-type. 
\end{itemize}
\end{exam}

\begin{prop}\label{prop:sp:descr:sl(2)-type_spectral_curve} A point $\underline{a} \in B_{2n}(X,M)$ is of $\sl(2)$-type if and only if all singular points of $\Sigma$ lie on the zero section of $M \rightarrow X$ and only two sheets meet in the singular points. In particular, all singular points of $\Sigma$ are of type $A_k$, $k \geq 1$, i. e. higher nodes and cusps. 

If $\disc_{\sp}^{\red}(\underline{a}) \in H^0(X,M^{2n(n-1)})$ has simple zero and $Z(a_{2n-2}) \cap Z(a_{2n})= \varnothing$, then $\underline{a}=(a_2, \dots, a_{2n}) \in B_{2n}(X,M)$ is of $\sl(2)$-type. 
\end{prop}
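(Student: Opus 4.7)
The plan is to identify $\Sigma/\sigma$ with the ``square-root'' spectral curve $\bar{\Sigma} \subset \mathsf{Tot}(M^2)$ cut out by
\[
Q(w) := w^n + a_2 w^{n-1} + \dots + a_{2n}, \qquad w=\eta^2,
\]
and then to compare the Jacobian criteria for singularities of $\Sigma$ and $\bar\Sigma$. The identification will be a local computation: on a trivializing chart, $\O_\Sigma = \O_X[\eta]/(Q(\eta^2))$ has $\sigma$-invariants $\O_X[w]/(Q(w))$. Writing $P(\eta) = Q(\eta^2)$, I then get $P_\eta = 2\eta\, Q_w(\eta^2)$ and $P_z = Q_z(\eta^2)$, so a point $(\eta_0, z_0) \in \Sigma$ with $\eta_0 \neq 0$ is singular iff $\bar\Sigma$ is singular at $(\eta_0^2, z_0)$; while $(0, z_0) \in \Sigma$ is singular iff $a_{2n}$ has a zero of order $\geq 2$ at $z_0$, and $(0, z_0) \in \bar\Sigma$ is singular iff additionally $a_{2n-2}(z_0) = 0$.

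The biconditional in Part 1 will follow from this comparison. For the ``if'' direction, suppose every singular point of $\Sigma$ lies on the zero section with only two sheets meeting. Then $\bar\Sigma$ inherits no singularity off the zero section, and on the zero section the two-sheet condition means exactly one root $\mu_j$ of $Q(\cdot, z_0)$ vanishes, i.e.\ $a_{2n-2}(z_0) \neq 0$, so $Q_w(0, z_0) \neq 0$ and $\bar\Sigma$ is smooth there too. The contrapositive of ``only if'' is immediate from the same equivalences: a singular point of $\Sigma$ off the zero section lifts to one of $\bar\Sigma$, and $\geq 4$ sheets meeting at a singular point $(0, z_0)$ of $\Sigma$ forces $a_{2n}(z_0) = a_{2n}'(z_0) = a_{2n-2}(z_0) = 0$, making $\bar\Sigma$ singular. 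For the $A_k$ statement, at a singular point of $\Sigma$ with two sheets meeting on the zero section, $\Sigma$ is locally $\eta^2 = \mu(z)$ with $\mu$ the unique vanishing root of $Q(\cdot, z_0)$; writing $\mu = (z-z_0)^k u$ with $u(z_0) \neq 0$ and extracting a local $k$-th root of $u$ normalizes the equation to $\eta^2 = \tilde z^k$, the standard $A_{k-1}$ singularity.

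For Part 2, I will verify smoothness of $\bar\Sigma$ at every point under the hypotheses. At $(0, z_0) \in \bar\Sigma$, condition (ii) gives $Q_w(0, z_0) = a_{2n-2}(z_0) \neq 0$. At a multiple root $(w_0, z_0) \in \bar\Sigma$ with $w_0 \neq 0$, I apply Weierstrass preparation to factor $Q(w, z) = \bigl(w^2 + b(z) w + c(z)\bigr)\, g(w,z)$ locally, with $g(w_0, z_0) \neq 0$ and $w^2 + b(z_0) w + c(z_0) = (w-w_0)^2$, so the section $\disc_{\sp}^{\red}(\underline{a})$ differs from $b^2 - 4c$ by a local unit. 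Hypothesis (i) then forces $b^2 - 4c$ to have a simple zero at $z_0$, and a direct differentiation yields
\[
(b^2 - 4c)'(z_0) = -4\bigl(b'(z_0)\, w_0 + c'(z_0)\bigr) = -\tfrac{4}{g(w_0, z_0)}\, Q_z(w_0, z_0),
\]
so this simple-zero condition is equivalent to $Q_z(w_0, z_0) \neq 0$, giving the desired smoothness. The main obstacle is this last equivalence, where Weierstrass preparation is needed to reduce the global discriminant section to a local quadratic model and to relate the order of vanishing of $\disc_{\sp}^{\red}$ to the Jacobian criterion for $\bar\Sigma$; the remaining steps are routine bookkeeping of Jacobian criteria and local normal forms.
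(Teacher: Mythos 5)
Your proof is correct and takes essentially the same route as the paper: identify $\Sigma/\sigma$ locally with the plane curve $Q(w,z)=0$, $w=\eta^2$ (this is the content of Proposition \ref{prop:canon:red:spec}), compare Jacobian criteria for $\Sigma$ and for the quotient, and in the converse direction reduce a two-sheeted singular point to the normal form $\eta^2=\tilde z^k$; your treatment of the second part is in fact more detailed than the paper's, which merely asserts that the discriminant condition forces simple ramification away from the zero section. The one point you should add in Part 2 is that, before writing a \emph{quadratic} Weierstrass factor at a multiple root $w_0\neq 0$, the simple-zero hypothesis on $\disc_{\sp}^{\red}(\underline{a})$ already forces the multiplicity of $w_0$ in $Q(\cdot,z_0)$ to be exactly $2$, because $m$ coalescing roots contribute a zero of order at least $m-1$ to the discriminant, so the degree-$m$ Weierstrass polynomial with $m\geq 3$ is excluded rather than silently ignored.
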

\begin{proof} If $\underline{a} \in B_{2n}(X,M)$ is of $\sl(2)$-type, there can not be any singular points away from the zero section of $M$. Otherwise $\Sigma/\sigma$ is singular, too. Let $y \in \Sigma$ be a singular point on the zero section. Choose a trivialization $M \rest_U \cong U \times \C$ over a coordinate neighbourhood $(U,z)$ centred at $\pi(y)$ and let $(z,\lambda)$ be the induced coordinate on $M$. Then $\Sigma$ is locally given by the equation
\[ q(z,\lambda):= \lambda^{2n}+\lambda^{2n-2}a_2(z) + \dots + a_{2n}(z)=0
\] with the involution given by $\sigma: (z,\lambda) \mapsto (z,-\lambda)$. Because $y=(0,0)$ is a singular point, we have 
\[ \frac{\del}{\del z} \rest_{(z,\lambda)=(0,0)} \, q=  \frac{\del}{\del \lambda} \rest_{(z,\lambda)=(0,0)} \, q =0.
\] Hence, $\frac{\del}{\del z} \rest_{z=0} \, a_{2n}=0$, i.e.\ $a_{2n}$ has a higher order zero at $z=0$. Now, $\Sigma/\sigma$ is locally given by the equation
\[ q^\sigma(\eta,z)=\eta^{n}+\eta^{n-1}a_2(z) + \dots + a_{2n}(z)=0
\] and smooth at $(0,0)$ by assumption . Therefore, 
\[ 0 \neq \frac{\del}{\del \eta} \rest_{(z,\eta)=(0,0)} \,  q^\sigma = a_{2n-2}(0).
\] In particular, $\lambda=0$ is a zero of $q(0,\lambda)$ of multiplicity $2$ and hence only two sheets meet in the singular point. 

Conversely, if a singular point $p$ lies on the zero section and two sheets of the covering $\pi$ meet there, then $\Sigma$ is locally given by a polynomial equation of the form $y^2-z^k=0$. Let $R=\C[y,z]/(y^2-z^k)$. The ring of invariant functions $R^\sigma$ is generated by $y^2$ and $z$. In particular, 
\[  R^\sigma \rightarrow \C[z], \quad y^2 \mapsto z^k, z \mapsto z
\] defines an isomorphism of coordinate rings. Hence, $\mathsf{Spec}(R^\sigma) \cong \C$ and the quotient is smooth. 

The discriminant condition implies that, away from the zero section, the only points, where different sheets meet, are smooth ramification points of ramification index 1. Furthermore, $Z(a_{2n-2}) \cap Z(a_{2n})= \varnothing$ implies that only two sheets meet at the zero section, in particular at the singular points. Hence, the spectral curve is of $\sl(2)$-type by the first criterion.
\end{proof}

\begin{rema} Nevertheless, there can be smooth ramification points of $\pi: \Sigma \rightarrow X$ of higher order on the zero section of $M$ for an $\sl(2)$-type spectral curve $\Sigma$. For $n=2$, an example is the spectral curve defined by $(0,a_4) \in B_{4}(X,M)$ with $a_4$ having simple zeros. 
\end{rema}

\begin{rema} An irreducible algebraic/analytic subset $Z \subset \C^n$ is a $C^1$-manifold in a neighbourhood of a point $p$ if and if only $Z$ is locally given by algebraic/analytic equations
\[ F_1(x_1,\dots,x_n)=0, \ \dots, \ F_k(x_1,\dots, x_n)=0,
\] such that $D(F_1,\dots,F_k)$ has maximal rank at $p$. The backwards implication follows from the implicit function theorem. For the converse see \cite[page 13]{Mi68}.
\end{rema}

\begin{prop}\label{prop:canon:red:spec} Let $p: M^2 \rightarrow X$ the bundle map and $\eta: M^2 \rightarrow p^*M^2$ the tautological section. Let $(a_2,\dots,a_{2n}) \in B_{2n}(X,M)$ be of $\sl(2)$-type. The reduced spectral curve $\Sigma/\sigma$ is the zero divisor of
\[ \eta^n+a_2 \eta^{n-1} + \dots + a_{2n-2} \eta + a_{2n} \in H^0(M^2,p^*M^{2n}).
\] In particular, $K_{\Sigma/\sigma} \cong \pi_n^* \left(M^{2n-2} \otimes K_X \right)$ and $\O(R) \cong \pi_n^*M^{2n-2}$, where $R \in \Div(\Sigma/\sigma)$ is the ramification divisor of $\pi_n: \Sigma/\sigma \rightarrow X$.
\end{prop}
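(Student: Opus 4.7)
The plan is to realise $\Sigma/\sigma$ as an explicit effective divisor in the smooth surface $\mathsf{Tot}(M^2)$ via the fibrewise squaring map $\mathsf{Tot}(M) \to \mathsf{Tot}(M^2)$; once this identification is in place, the canonical bundle and ramification divisor formulas follow from adjunction on the surface and Riemann--Hurwitz for $\pi_n$.

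First I would identify the squaring map with the geometric $\sigma$-quotient. Presenting $\mathsf{Tot}(M)$ as the relative Spec over $X$ of $\mathrm{Sym}^\bullet M^{-1}$, the involution $\sigma$ acts by sign on each graded piece and the invariant subring is $\mathrm{Sym}^\bullet M^{-2}$. This canonically identifies $\mathsf{Tot}(M)/\sigma$ with $\mathsf{Tot}(M^2)$, with the quotient map being the fibrewise squaring $v \mapsto v \otimes v$. Letting $\lambda$ denote the tautological section on $M$, this map pulls the tautological $\eta$ on $M^2$ back to $\lambda^2$.

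The key computation is then the quotient of $\Sigma$. Since the defining equation $F(\lambda) = \lambda^{2n} + a_2 \lambda^{2n-2} + \dots + a_{2n}$ involves only even powers of $\lambda$, one can write $F = G(\lambda^2)$ with $G$ the polynomial in the statement. Since $F$ is itself $\sigma$-invariant, the ideal $(F) \subset \mathrm{Sym}^\bullet M^{-1}$ is $\sigma$-stable, and its intersection with the even subring is precisely $G \cdot \mathrm{Sym}^\bullet M^{-2}$. Taking $\sigma$-invariants therefore commutes with the quotient:
\[ \O_{\Sigma/\sigma} = \bigl(\mathrm{Sym}^\bullet M^{-1}/(F)\bigr)^\sigma = \mathrm{Sym}^\bullet M^{-2}/(G), \]
which is exactly the statement $\Sigma/\sigma = Z(G) \subset \mathsf{Tot}(M^2)$ as subschemes.

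From here the second part is a standard surface calculation. The defining section $G \in H^0(\mathsf{Tot}(M^2), p^*M^{2n})$ gives $\O(\Sigma/\sigma) \cong p^*M^{2n}$, and the standard identity $K_{\mathsf{Tot}(L)} = p^*(K_X \otimes L^{-1})$ for total spaces of line bundles yields $K_{\mathsf{Tot}(M^2)} = p^*(K_X \otimes M^{-2})$. Adjunction then gives
\[ K_{\Sigma/\sigma} = \bigl(K_{\mathsf{Tot}(M^2)} \otimes \O(\Sigma/\sigma)\bigr)\bigr|_{\Sigma/\sigma} = \pi_n^*(K_X \otimes M^{2n-2}), \]
and Riemann--Hurwitz $K_{\Sigma/\sigma} \cong \pi_n^* K_X \otimes \O(R)$ delivers $\O(R) \cong \pi_n^* M^{2n-2}$. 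The most delicate point is making the identification $\Sigma/\sigma = Z(G)$ scheme-theoretic (not just set-theoretic); this is exactly the parity argument above, and the $\sl(2)$-type hypothesis enters only implicitly, ensuring smoothness of $\Sigma/\sigma$ so that adjunction and Riemann--Hurwitz apply in the clean form used here.
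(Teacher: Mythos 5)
Your proof is correct and follows essentially the same route as the paper: the identification $K_{\mathsf{Tot}(M^2)}\cong p^*(K_X\otimes M^{-2})$, adjunction, and $\O(R)=K_{\Sigma/\sigma}\otimes\pi_n^*K_X^{-1}$ are exactly the paper's argument for the second part. For the first assertion the paper simply cites the local coordinate computation from the preceding proposition, whereas you globalize it via the relative $\mathsf{Spec}$ of $\mathrm{Sym}^\bullet M^{-1}$ and the parity argument identifying $(F)\cap\mathrm{Sym}^\bullet M^{-2}=(G)$; this is a welcome scheme-theoretic sharpening of the same idea, not a different method.
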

\begin{proof}
The first assertion is clear from the proof of the previous proposition. It is easy to see, that $K_{M^2} \cong p_{M^2}^*(K_X \otimes M^{-2})$ and hence by the adjunction formula
\[ K_{\Sigma/\sigma} = \left( K_{M^2} \otimes p_{M^2}^*M^{2n}\right) \rest_{\Sigma/\sigma} = \pi_n^* \left(M^{2n-2} \otimes K_X \right).
\] The last assertion follows as $\O(R)=K_{\Sigma/\sigma}\otimes \pi_n^*K_X^{-1}$.
\end{proof}

\begin{wrapfigure}{r}{0.4\textwidth} \vspace{-0.5cm}
\begin{tikzpicture}\matrix (m) [matrix of math nodes,row sep=2em,column sep=3em,minimum width=2em]
  { & & \tilde{\Sigma} \\
  & & \\
    \Sigma/\sigma  & \Sigma & \\
    & & \\ 
    & X & \\
  };
  \path[-stealth]
  	(m-1-3) edge (m-3-2)
  			edge node [above] {$\tilde{\pi}_2$ } (m-3-1)
  			edge node [right] {$\tilde{\pi}$} (m-5-2)
    (m-3-2) 
           edge node [above] {$\pi_2$ } (m-3-1)
           edge node [right] {$\pi$} (m-5-2)
    (m-3-1) edge node [left] {$\pi_n$}(m-5-2);
\end{tikzpicture} \caption{Spectral curves  \label{fig2}}
\end{wrapfigure}
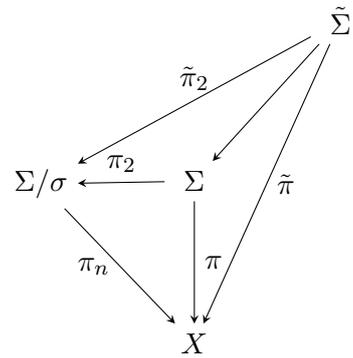 
In the subsequent analysis of $\sl(2)$-type Hitchin fibres, another version of the spectral curve plays an important role. We can naturally associate a smooth curve $\tilde{\Sigma}$ to the singular spectral curve $\Sigma$ by normalisation. It can be defined as the unique extension of the covering $\pi\rest_{\Sigma^\times}: \Sigma^\times \rightarrow X^\times$  to a holomorphic covering of Riemann surfaces. Here $\cdot^\times$ refers to the complement of ramification resp. branch points. If $\Sigma/\sigma$ is smooth, it can be defined in the same way as the extension of the covering of Riemann surfaces $\pi_2\rest_{\Sigma^\times}: \Sigma^\times \rightarrow (\Sigma/\sigma)^\times$. Intrinsically, it is the analytic curve $\tilde{\Sigma}$ associated to the integral closure of the structure sheaf. We obtain the commutative diagram in Figure \ref{fig2}. 

For $a_{2n} \in H^0(X,M^{2n})$, let
\begin{align*} 
n_{\odd}:=n_{\odd}(a_{2n}):=\# \{ x \in Z(a_{2n}) \mid x \text{ zero of odd order} \}.
\end{align*}

\begin{lemm} Let $(a_2,\dots,a_{2n}) \in B_{2n}(X,M)$ be of $\sl(2)$-type. Then the genus of $\Sigma/\sigma$ is given by
\[ g(\Sigma/\sigma)=n(g-1)+(n^2-n)\deg(M)+1.
\] The genus of the normalised spectral curve is 
\[ g(\tilde{\Sigma})=2n(g-1)+2(n^2-n)\deg(M) + \tfrac12 n_{\odd} +1.
\] If $M=K$, we have
\[ g(\Sigma/\sigma)=(2n^2-n)(g-1)+1 
\] and
\[ g(\tilde{\Sigma})=(4n^2-2n)(g-1) + \tfrac12 n_{\odd} +1.
\]
\end{lemm}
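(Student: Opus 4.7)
The strategy is a direct application of Riemann--Hurwitz to the two coverings $\pi_n\colon \Sigma/\sigma \to X$ and $\tilde{\pi}_2\colon \tilde{\Sigma}\to \Sigma/\sigma$, combined with a local computation at the singular fibres of $\pi_n$ to locate the branch locus of $\tilde{\pi}_2$.

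First I would compute $g(\Sigma/\sigma)$. Proposition \ref{prop:canon:red:spec} already gives $\O(R)\cong \pi_n^*M^{2n-2}$ for the ramification divisor $R$ of the $n$-sheeted covering $\pi_n\colon \Sigma/\sigma\to X$, so
\[
\deg R \;=\; n \cdot (2n-2)\deg(M)\;=\;2n(n-1)\deg(M).
\]
Riemann--Hurwitz then yields
\[
2g(\Sigma/\sigma)-2 \;=\; n(2g-2) + 2n(n-1)\deg(M),
\]
which rearranges to $g(\Sigma/\sigma)=n(g-1)+(n^2-n)\deg(M)+1$, the claimed formula.

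The second step is to identify the branch locus of the double cover $\tilde{\pi}_2\colon \tilde{\Sigma}\to \Sigma/\sigma$. The ramification points are the $\sigma$-fixed points on $\tilde{\Sigma}$, i.e.\ the preimages in $\tilde{\Sigma}$ of points of $\Sigma/\sigma$ lying on the zero section of $M^2$, which correspond to zeros of $a_{2n}$. Fix $x\in Z(a_{2n})$ of order $k$. Choose a local coordinate $z$ at $x$ and a trivialisation of $M$, so near the corresponding point $(z,\eta)=(0,0)$ of $\Sigma/\sigma$ the equation reads $a_{2n-2}(0)\eta + z^k u(z) + O(\eta^2)=0$, with $u(0)\neq 0$ and $a_{2n-2}(0)\neq 0$ by the proof of Proposition \ref{prop:sp:descr:sl(2)-type_spectral_curve}. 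Hence $z$ is a local uniformiser on the smooth curve $\Sigma/\sigma$ and $\eta$ vanishes to order $k$ in $z$. Passing to $\tilde{\Sigma}$, where $\lambda^2=\eta$, the local equation becomes $\lambda^2 = c\,z^k (\text{unit})$. For $k$ even the two branches of $\lambda=\pm\sqrt{c}\,z^{k/2}(\text{unit})$ each map isomorphically to $\Sigma/\sigma$ (no ramification), while for $k$ odd one obtains a single smooth branch with local parameter $\mu$ satisfying $z = \mu^2(\text{unit})$, giving a simple ramification point of $\tilde{\pi}_2$. Therefore the branch divisor of $\tilde{\pi}_2$ has degree exactly $n_{\odd}$.

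With this in hand, Riemann--Hurwitz for the degree-$2$ cover $\tilde{\pi}_2$ gives
\[
2g(\tilde{\Sigma})-2 \;=\; 2\bigl(2g(\Sigma/\sigma)-2\bigr) + n_{\odd},
\]
and substituting the first formula yields
\[
g(\tilde{\Sigma}) \;=\; 2n(g-1) + 2(n^2-n)\deg(M) + \tfrac{1}{2}n_{\odd} + 1.
\]
The two specialisations to $M=K$ follow immediately by substituting $\deg(M)=2g-2$ and simplifying. The only nontrivial point is the local analysis identifying odd-order zeros of $a_{2n}$ with branch points of $\tilde{\pi}_2$; everything else is bookkeeping via Riemann--Hurwitz.
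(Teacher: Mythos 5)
Your proof is correct and follows the same route as the paper, which simply cites Proposition \ref{prop:canon:red:spec} and Riemann--Hurwitz; you have filled in the details the paper leaves implicit, in particular the local analysis showing that $\tilde{\pi}_2$ is simply branched exactly over the points of $\Sigma/\sigma$ corresponding to odd-order zeros of $a_{2n}$ (using $a_{2n-2}(x)\neq 0$ from the $\sl(2)$-type condition). All the degree bookkeeping checks out.
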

\begin{proof} This is immediate from Proposition \ref{prop:canon:red:spec} and the Riemann-Hurwitz formula.
\end{proof}

\subsection{\texorpdfstring{$\sl(2,\C)$}{sl(2,C)}-type Hitchin fibres are fibres of an \texorpdfstring{$\SL(2,\C)$}{SL(2,C)}-Hitchin map}\label{ssec:sp:fibreident}

In this subsection, we prove the main theorem in the $\Sp(2n,\C)$-case identifying the $\sl(2)$-type Hitchin fibres with fibres of an $\SL(2,\C)$-Hitchin system on the spectral curve $\Sigma/\sigma$. 

\begin{prop}\label{prop:pushforward:sp2nC} Let $p: Y \rightarrow X$ be an $s:1$ covering of Riemann surfaces. Fix a square root $\O(R)^{\frac12}$ of the ramification divisor $R\in \Div(Y)$. Let $(E, \Phi) \in \M_{\SL(2,\C)}(Y,p^*M)$, then the pushforward $(p_*(E\otimes \O(R)^{\frac12}),p_*\Phi)$ defines a $M$-twisted $\Sp(2s,\C)$-Higgs bundle on $X$. 

Recall that the Ramification divisor $R$ has even degree by the Riemann-Hurwitz formula.
\end{prop}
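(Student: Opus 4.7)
The plan is to verify the three conditions of being an $M$-twisted $\Sp(2s,\C)$-Higgs bundle: rank $2s$, a non-degenerate antisymmetric form $\omega$ on the underlying bundle, and a twisted endomorphism valued in $M$ that is antisymmetric with respect to $\omega$. Set $F := E \otimes \O(R)^{\frac12}$ on $Y$. Rank is immediate: $E$ has rank $2$ and $p$ is $s$-to-one, so $p_*F$ has rank $2s$.

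For the symplectic form, the key input is Grothendieck duality for the finite flat morphism $p$: the relative dualizing sheaf is $\omega_{Y/X} = K_Y \otimes p^*K_X^{-1} = \O(R)$, and we have a natural trace map $\mathrm{tr}_p : p_*\O(R) \to \O_X$. The $\SL(2,\C)$-structure on $E$ supplies an antisymmetric isomorphism $\omega_E : E \otimes E \to \O_Y$. Tensoring by $\O(R)^{\frac12} \otimes \O(R)^{\frac12} = \O(R)$ produces an antisymmetric pairing $F \otimes_{\O_Y} F \to \O(R)$. Pushing forward along the canonical map $p_*F \otimes_{\O_X} p_*F \to p_*(F \otimes_{\O_Y} F)$ and composing with $\mathrm{tr}_p$ yields an antisymmetric pairing
\[
\omega : p_*F \otimes_{\O_X} p_*F \longrightarrow \O_X.
\]
Non-degeneracy is exactly the statement that the induced map $p_*F \to (p_*F)^\vee$ is an isomorphism. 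But Grothendieck duality for $p$ gives the natural isomorphism $(p_*F)^\vee \cong p_*(F^\vee \otimes \omega_{Y/X})$, and the $\SL(2,\C)$-structure together with the choice of $\O(R)^{\frac12}$ identifies $F^\vee \otimes \O(R)$ with $E^\vee \otimes \O(R)^{\frac12} \cong E \otimes \O(R)^{\frac12} = F$, so $(p_*F)^\vee \cong p_*F$ as claimed. Thus $\omega$ is symplectic of rank $2s$.

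For the Higgs field, $\Phi \in H^0(Y,\End(E) \otimes p^*M)$ induces $\Phi \otimes \id$ on $F$, giving $\Phi_F : F \to F \otimes p^*M$. The projection formula yields
\[
p_*(F \otimes p^*M) \cong p_*F \otimes M,
\]
so $p_*\Phi : p_*F \to p_*F \otimes M$ is $M$-twisted, as required. Finally, the $\SL(2,\C)$-condition $\tr(\Phi)=0$ is equivalent to antisymmetry of $\Phi$ with respect to $\omega_E$; this is a pointwise statement that persists under the tensor twist to $F$ and under pushforward along $p$, because $\omega$ on $p_*F$ was constructed from $\omega_E$ by $\O_Y$-linear operations and the $\O_X$-linear trace. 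Hence $\omega(p_*\Phi \cdot, \cdot) + \omega(\cdot, p_*\Phi \cdot) = 0$, completing the verification.

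The only slightly delicate point is the non-degeneracy of the pushed-forward pairing and its compatibility with the duality isomorphism; this is where the choice of the \emph{square root} $\O(R)^{\frac12}$ (rather than some other twist) is essential, since it is precisely the twist that turns the Grothendieck-duality identification $(p_*F)^\vee \cong p_*(F^\vee \otimes \omega_{Y/X})$ into a self-duality. The existence of this square root is guaranteed by the parenthetical Riemann--Hurwitz observation, which implies $\deg R \in 2\Z$.
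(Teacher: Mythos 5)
Your proof is correct, but it takes a genuinely different route from the paper's. You establish non-degeneracy of the pushed-forward pairing globally and functorially, via relative duality for the finite flat map $p$: the adjoint of the trace pairing on $p_*F$ factors as $p_*$ applied to the isomorphism $F \to F^\vee \otimes \omega_{Y/X}$ (which exists precisely because of the half-twist by $\O(R)^{\frac12}$), followed by the duality isomorphism $p_*(F^\vee\otimes\omega_{Y/X}) \cong (p_*F)^\vee$, so it is an isomorphism. The paper instead works by hand: away from the branch locus it writes the pushed-forward form in block-diagonal symplectic frames over trivially covered neighbourhoods, and at a ramification point of order $k$ it realizes $p^*p_*E'$ as an explicit Hecke transformation of $\bigoplus_j (\tau^j)^*E'$ for the local deck transformation $\tau$, computes the degenerate form $\Omega$ in an eigenframe of the $\Z_k$-action, and checks directly that the Hecke-transformed form $\hat\Omega$ is non-degenerate. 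Your argument is shorter and more conceptual --- it is essentially the general pushforward-of-pairings statement the paper records separately as Proposition \ref{prop:push:pairing} --- whereas the paper's explicit local model (the $\Z_k$-invariant frames and the transition function $\psi_{01}$) is not wasted effort: it is reused in the proof of Theorem \ref{Theo:isom:Hitchinfibres} to show that pullback and pushforward are mutually inverse. Your reduction of $\omega$-antisymmetry of the Higgs field to $\tr(\Phi)=0$ in rank two, and its persistence under the $\O_Y$-linear twist and the $\O_X$-linear trace, is also correct and matches what the paper leaves implicit.
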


\begin{proof}
Let $E':=E\otimes \O(R)^{\frac12}$. The pushforward $p_*E'$ is locally free and 
\[ p_*\Phi: p_*E' \rightarrow p_*(E' \otimes p^*M)=p_*E'\otimes M
\] defines a $M$-twisted Higgs field on $p_*E'$. The symplectic form $\omega \in H^0(Y,\bigwedge^2E^\vee)$ induces a degenerate symplectic form $\omega'=\omega(\partial p)^{-1} \in H^0(Y,\bigwedge^2E^\vee(-R))$ on $E'$. Let $U \subset X$ be trivially covered, such that $E'\rest_{p^{-1}(U)}$ is trivial. Hence $p^{-1}(U)= V_1 \sqcup \dots \sqcup V_s$. Let $s_{ij}$ with $i=1,2; j=1,\dots, s$ be symplectic frames of $E' \rest_{V_j}$, i.e.\
\[ \omega'\rest_{V_j}= \begin{pmatrix} 0 & 1 \\ -1 & 0 \end{pmatrix}
\] respective $s_{1j},s_{2j}$. Then the induced symplectic form on $p_*(E')\rest_{U}$ is given by 
\[ p_* \omega' \rest_{U}= \left(\begin{array}{cc|c|cc}  0 & 1 &&& \\ -1 & 0 &&& \\ \hline & & \ddots & & \\ \hline &&&  0 & 1 \\ &&& -1 & 0
\end{array} \right)
\] respective the frame $s_{ij}$. This defines a symplectic form $p_*\omega'$ on $p_*E' \rest_{Y^\times}$, where $Y^\times= Y \setminus \supp R$. Obviously, $p_*\omega'( p_*\Phi \, \cdot, \cdot)=-p_*\omega'(\cdot, p_*\Phi \, \cdot)$. 

To extend the symplectic form over the branch points, we use a description of the algebraic pushforward by local $\Z_k$-invariant bundles at the corresponding ramification point. Let $\omega':=\omega(\partial p)^{-1} \in H^0(Y,\bigwedge^2 (E')^\vee)$. Let $y \in Y$ be a ramification point of order $k$. Choose coordinate neighbourhoods $(V,z)$ centred at $y$ and $(U,w)$ centred at $p(y)$, such that the projection map is given by $p: z \mapsto z^k$. Let $\xi$ a primitive root of unity of order $k$. Then $\tau: V \rightarrow V,\ z \mapsto \xi z$ induces a local $\Z_k$-action interchanging the sheets. Consider the local holomorphic $\Z_k$-vector bundle 
\[ F:= E'\rest_V \oplus \tau^*E'\rest_V \oplus \dots \oplus (\tau^{k-1})^*E'\rest_V.
\] Let $s_1,s_2$ be a symplectic frame of $E' \rest_V$, then 
\begin{align*}
 s_{ij}:=\tfrac{1}{k} (s_i + \xi^{j}\tau^*s_i + \xi^{2j}(\tau^2)^*s_i + \dots + \xi^{(k-1)j}(\tau^{k-1})^*s_i)
\end{align*} for $i \in \{ 1,2\}$ and $0 \leq j \leq k-1$ define a frame of $F$, such that the $\Z_k$-action is given by
\[ \diag(1, 1, \xi, \xi, \dots, \xi^{k-1}, \xi^{k-1}).
\]
The induced degenerate symplectic form $\Omega=\omega' + \tau^* \omega' + \dots +(\tau^{k-1})^*\omega'$  is given by 
\[ \Omega(s_{1l},s_{2m})=\left\{ \begin{array}{ll} z^{-k+1} & \text{for } l+m=k-1 \\ 
0 & \text{otherwise}.
\end{array} \right.
\] We obtain a local $\Z_k$-invariant holomorphic vector bundle $\hat{F}$ descending to $p(V)$ as a Hecke transformation
\[ 0 \rightarrow \hat{F} \rightarrow F \rightarrow \bigoplus\limits_{i=1}^{k-1} (\O_y/z^{i}\O_y)^2 \rightarrow 0
\] introducing the new transition function
\[ \psi_{01}=\diag(1,1,z,z, \dots, z^{k-1},z^{k-1})
\] respective the frame $s_{ij}$. The Hecke transformed Higgs bundle is $\Z_k$-invariant and descends to a local frame of the pushforward $p_*(E',\Phi)$ on $p(V)$. The induced symplectic form is given by 
\[ \hat{\Omega}=(\psi_{01}^*\Omega)(\hat{s}_{1l},\hat{s}_{2m})=\left\{ \begin{array}{ll} 1 & \text{for } l+m=k-1 \\ 
0 & \text{otherwise},
\end{array} \right.
\] where $\hat{s}_{ij}$ denotes the induced frame of $\hat{F}$ at $y$. Hence, $\hat{\Omega}$ descends to a non-degenerate symplectic form on $p_*E'$. Again it is clear that the induced Higgs field $p_*\Phi$ is anti-symmetric with respect to the symplectic form. 
\end{proof}

In the same way one proves:
\begin{prop}\label{prop:push:pairing} Let $\pi: Y \rightarrow X$ be a branched covering of Riemann surfaces. Let $E,F$ holomorphic vector bundles on $Y$ and $\beta: E \otimes F \rightarrow \C$ a non-degenerate bilinear pairing. Fix a square root $\O(R)^{\frac12}$. Then there is an induced non-degenerate pairing
\[ \pi_*(E \otimes \O(R)^{\frac12}) \otimes \pi_*(F \otimes \O(R)^{\frac12}) \rightarrow \C.
\]  
\end{prop}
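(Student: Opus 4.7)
The plan is to adapt the proof of Proposition \ref{prop:pushforward:sp2nC} essentially verbatim, with the pairing $\beta$ playing the role there played by the symplectic form $\omega$ and with two distinct bundles $E$ and $F$ in place of a single symplectic vector bundle paired with itself. The antisymmetry of $\omega$ used in Proposition \ref{prop:pushforward:sp2nC} is preserved by the construction there but is nowhere invoked in the extension argument, so dropping it causes no issue. Writing $E' := E \otimes \O(R)^{\frac12}$ and $F' := F \otimes \O(R)^{\frac12}$, the goal is to produce a non-degenerate pairing $\pi_* E' \otimes \pi_* F' \to \O_X$.

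First I would treat the unramified locus. Over $X^\times \subset X$ the line bundle $\O(R)^{\frac12}$ is canonically trivial on $\pi^{-1}(X^\times)$. Choosing an evenly covered open $U \subset X^\times$ with $\pi^{-1}(U) = V_1 \sqcup \dots \sqcup V_s$ and local frames of $E|_{V_j}$ dual under $\beta$ to local frames of $F|_{V_j}$, both pushforwards split as direct sums indexed by the sheets. Assembling $\beta$ block-diagonally across the sheets and dividing by $\partial \pi$ then defines a non-degenerate pairing $\pi_* E' \otimes \pi_* F' \to \O_X$ over $X^\times$.

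The key content is extending this pairing across the branch points. At a ramification point $y \in Y$ of index $k$, I would mimic the local $\Z_k$-invariant bundle construction used in the previous proof. Choose coordinates $(V,z)$ at $y$ and $(U,w)$ at $\pi(y)$ with $\pi \colon z \mapsto z^k$, and let $\tau \colon z \mapsto \xi z$ generate the local deck group. Form the local $\Z_k$-bundles $E|_V \oplus \tau^* E|_V \oplus \dots \oplus (\tau^{k-1})^* E|_V$ and analogously for $F$, together with the $\Z_k$-eigenframes obtained by averaging frames of $E|_V$ and $F|_V$ taken to be $\beta$-dual. The induced pairing in these eigenframes carries a factor $z^{-(k-1)}$ from $(\partial \pi)^{-1}$, with nonzero entries only in positions $(l,m)$ satisfying $l+m = k-1$. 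The Hecke transformations encoding the descent of $E'$ and $F'$ introduce transition matrices $\diag(1, z, \dots, z^{k-1})$ on each side, and their combined contribution in the nonzero entries is exactly $z^{l+m} = z^{k-1}$. This cancels the pole, and the Hecke-transformed pairing reads
\[
\hat{\beta}\bigl(\hat{s}^E_{a,l},\, \hat{s}^F_{b,m}\bigr) = \begin{cases} \delta_{ab} & \text{if } l+m = k-1, \\ 0 & \text{otherwise,} \end{cases}
\]
which is manifestly non-degenerate at the stalk over $\pi(y)$. Patching the local descriptions produces the global non-degenerate pairing.

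The only real obstacle is bookkeeping: one must verify that the two $\O(R)^{\frac12}$-twists—one on $E$ and one on $F$—combine symmetrically to cancel the $z^{-(k-1)}$ pole coming from $(\partial \pi)^{-1}$. This is precisely why the square root $\O(R)^{\frac12}$ appears on both factors rather than $\O(R)$ asymmetrically on one. The underlying calculation is identical to the one already carried out for $\omega$ in Proposition \ref{prop:pushforward:sp2nC}, so I expect the proof to amount to pointing back to that argument and remarking on the obvious generalisation.
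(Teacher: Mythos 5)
Your proposal is correct and matches the paper exactly: the paper gives no separate argument for this proposition, stating only that it is proved ``in the same way'' as Proposition \ref{prop:pushforward:sp2nC}, which is precisely the adaptation you carry out. Your observation that the antisymmetry of $\omega$ plays no role in the extension over ramification points, and that the two half-twists combine to cancel the $z^{-(k-1)}$ pole via $z^{l+m}=z^{k-1}$, is the whole content of the intended proof.
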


Let $\underline{a} \in B_{2n}(X,M)$ be of $\sl(2)$-type. The spectral curve $\Sigma$ comes with a section $\lambda \in H^0(\Sigma,\pi^*M)$ solving the spectral equation. The product $\lambda \sigma^*(\lambda) \in H^0(\Sigma,\pi^*M^2)$ defines a $\sigma$-invariant section descending to $b_2 \in H^0(\Sigma/\sigma, \pi^*_n M^2)$. 

\begin{prop}\label{prop:pullback:toHit} Let $\underline{a} \in B_{2n}(X,M)$ be of $\sl(2)$-type and $b_2 \in H^0(\Sigma/\sigma,\pi_n^*M^2)$ the induced section. There is a holomorphic map
\[ \H^{-1}_{\Sp(2n,\C)}(\underline{a}) \rightarrow \H^{-1}_{\SL(2,\C)}(b_2) \subset \M_{\SL(2,\C)}(\Sigma/\sigma,\pi_n^*M).
\]   
\end{prop}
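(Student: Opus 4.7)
The plan is to apply a Beauville--Narasimhan--Ramanan-type spectral construction to the squared Higgs field $\Phi^2$, and then use the symplectic form $\omega$ to equip the resulting rank-two object on $\Sigma/\sigma$ with the trivialisation of its determinant needed for an $\SL(2,\C)$-structure. Given $(E,\Phi,\omega) \in \H^{-1}_{\Sp(2n,\C)}(\underline{a})$, I would first consider $\Psi := \Phi^2 \colon E \to E \otimes M^2$. The spectral equation $\Phi^{2n} + a_2\Phi^{2n-2} + \cdots + a_{2n} = 0$ passes under $\eta = \lambda^2$ to $\Psi^n + a_2\Psi^{n-1} + \cdots + a_{2n} = 0$, so the spectral curve of $\Psi$ is exactly $\Sigma/\sigma$ as in Proposition~\ref{prop:canon:red:spec}. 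Because $\Sigma/\sigma$ is smooth by the $\sl(2)$-type hypothesis, the BNR correspondence produces a locally free sheaf $\tilde E$ on $\Sigma/\sigma$ of rank $2n/n = 2$, together with a canonical identification $(\pi_n)_*\tilde E \cong E$ intertwining multiplication by the tautological section $\eta \in H^0(\Sigma/\sigma,\pi_n^*M^2)$ with the action of $\Psi$.

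Since $\Psi = \Phi \circ \Phi$, the Higgs field $\Phi$ commutes with $\Psi$ and therefore descends through this correspondence to a $\pi_n^*M$-twisted Higgs field $\tilde\Phi \colon \tilde E \to \tilde E \otimes \pi_n^*M$. Over a point $y \in \Sigma/\sigma$ lying above $x \in X$, the fibre $\tilde E_y$ is the $\eta(y)$-eigenspace of $\Psi_x$, and $\tilde\Phi_y$ is the restriction $\Phi_x\rest_{\tilde E_y}$; its two eigenvalues $\lambda_\pm \in M_x$ satisfy $\lambda_\pm^2 = \eta(y)$ and are interchanged by $\sigma$.

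To upgrade $\tilde E$ to an $\SL(2,\C)$-Higgs bundle, recalling $\Sp(2,\C) = \SL(2,\C)$, I would exploit $\omega$. The anti-invariance $\omega(\Phi\,\cdot,\cdot) + \omega(\cdot,\Phi\,\cdot) = 0$ forces $\omega$ to annihilate $E_\lambda \otimes E_\mu$ whenever $\lambda + \mu \neq 0$, so inside each $\tilde E_y$ it pairs the two $\Phi_x$-eigenlines $E_{\lambda_+}$ and $E_{\lambda_-}$ non-degenerately. This produces a nowhere-vanishing skew pairing on $\tilde E$, i.e.\ a trivialisation of $\det \tilde E$. The same anti-invariance yields $\mathrm{tr}\,\tilde\Phi_y = \lambda_+ + \lambda_- = 0$ and $\det\tilde\Phi_y = \lambda_+\lambda_- = -\eta(y) = b_2(y)$, using $\pi_2^*b_2 = \lambda\sigma^*\lambda = -\lambda^2 = -\pi_2^*\eta$, so that the Hitchin image of $(\tilde E,\tilde\Phi)$ is exactly $b_2$.

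The main technical obstacle I foresee is the local freeness of $\tilde E$ at preimages in $\Sigma$ of the singular points of $\Sigma$. By Proposition~\ref{prop:sp:descr:sl(2)-type_spectral_curve} these singularities lie on the zero section of $M$ and project to smooth points of $\Sigma/\sigma$, so $\tilde E$ must be shown to be locally free there even though $\Phi$ itself is not diagonalisable. Working in the local model $y^2 = z^k$ for $\Sigma$ furnished by that proposition, one checks directly that $\Psi = \Phi^2$ has a uniformly two-dimensional kernel in the relevant chart, which gives local freeness of $\tilde E$ on the smooth curve $\Sigma/\sigma$. Holomorphic dependence of the construction on $(E,\Phi,\omega)$ in families is then automatic from the naturality of the BNR correspondence over the smooth base $\Sigma/\sigma$, yielding the desired holomorphic map of Hitchin fibres.
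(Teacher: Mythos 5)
Your overall strategy --- descend $E$ along $\pi_n$ using the $\pi_{n*}\O_{\Sigma/\sigma}$-module structure induced by $\Phi^2$ and Cayley--Hamilton, then use $\omega$ to produce the $\SL(2,\C)$-structure --- is the right one and close in spirit to the paper's construction (the paper realises the rank-two object instead as the kernel subsheaf $\Ker(\pi_n^*\Phi^2+b_2\id)\subset\pi_n^*E$). However, the step where you claim that $\omega$ induces a \emph{nowhere-vanishing} skew pairing on $\tilde E$, hence a trivialisation of $\det\tilde E$, is false for $n\geq 2$. A degree count already rules it out: from $\chi(\tilde E)=\chi(\pi_{n*}\tilde E)=\chi(E)$ and Riemann--Hurwitz one gets
\[ \deg\tilde E \;=\; 2(g(\Sigma/\sigma)-1)-2n(g(X)-1)\;=\;\deg R \;=\; n(2n-2)\deg M \;>\;0,
\]
so $\det\tilde E\cong\O(R)\cong\pi_n^*M^{2n-2}$ is never trivial. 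The fibrewise argument you give (that $\omega$ pairs $E_{\lambda_+}$ with $E_{\lambda_-}$ non-degenerately) is only valid where $\pi_n$ is unramified and the $2n$ eigenvalues are distinct; at a ramification point of $\pi_n$ the identification $E_x\cong\bigoplus_{y\in\pi_n^{-1}(x)}\tilde E_y$ breaks down, and by relative duality the pairing induced by $\omega$ on $\tilde E$ is naturally valued in the relative dualizing sheaf $\omega_{\pi_n}=\O(R)$ rather than in $\O_{\Sigma/\sigma}$; equivalently, the $\O$-valued pairing degenerates exactly along $R$. The necessary correction is the half-ramification twist: $\tilde E\otimes\O(R)^{-\frac12}=\tilde E\otimes\pi_n^*M^{1-n}$ has trivial determinant and carries the genuine symplectic form. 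This is precisely the role of the $\O(R)^{\frac12}$ appearing in Proposition \ref{prop:pushforward:sp2nC}, and of the identification $E_2\cong E_2^\vee\otimes\pi_n^*M^{2-2n}$ in the paper's proof, which it obtains cleanly by dualizing the four-term exact sequence defining $E_2$ and using the self-duality of $\pi_n^*\Phi^2+b_2\id$ under $\omega$ --- an argument that is sheaf-theoretic and therefore handles the ramification points without any pointwise diagonalisation.

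Two smaller remarks. First, the ``main technical obstacle'' you flag (local freeness of $\tilde E$ at points over the singularities of $\Sigma$) is in fact automatic: $\tilde E$ is torsion-free because $\pi_{n*}\tilde E=E$ is locally free and $\pi_n$ is finite, and a torsion-free coherent sheaf on the smooth curve $\Sigma/\sigma$ is locally free; also $\Phi^2$ does not have ``uniformly two-dimensional kernel'' near such a point (in the local model $\Phi_0^2=z^m\id$), so that phrasing should be dropped. Second, the genuine delicate locus is the ramification divisor of $\pi_n$ away from the zero section, which your proposal does not address --- this is where both the non-degeneracy of the pairing and the correct twist must be checked.
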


\begin{proof}
Let $(E,\Phi)\in \H^{-1}_{\Sp(2n,\C)}(\underline{a})$. The pullback of the characteristic polynomial along $\pi_n: \Sigma/\sigma \rightarrow X$
\[ \lambda^{2n}+ \pi_n^*a_2\lambda^{2n-2} + \dots + \pi_n^*a_{2n} 
\] factors through $\lambda^2+b_2$ and hence defines a generalised locally free eigen sheaf $E_2$ by
\[ 0 \rightarrow E_2 \rightarrow \pi^*_n E \xrightarrow{\pi_n^*\Phi^2 +b_2 \id} \pi^*_n (E \otimes M^2) \rightarrow E_2 \otimes \pi^*_n M^{2n} \rightarrow 0. \]
Here the cokernel of $\pi_n^*\Phi^2 +b_2 \id$ is identified with $E_2 \otimes \pi^*_n M^{2n}$ using the symplectic form. The dualized exact sequence tensored with $\pi_n^*M^2$ results in
\[ 0 \rightarrow E_2^\vee \otimes \pi^*_n M^{2-2n} \rightarrow \pi^*_n E^\vee \xrightarrow{(\pi_n^*\Phi^2 +b_2 \id)^\vee} \pi^*_n (E^\vee \otimes M^2) \rightarrow E_2^\vee \otimes \pi_n^*M^2 \rightarrow 0.
\] The symplectic form $\omega$ identifies $E$ with $E^\vee$ and from the anti-symmetry of the Higgs field the bundle map $\pi_n^*\Phi^2 +b_2 \id_{\pi_n^*E}$ is self-dual. Hence, there is an induced isomorphism $E_2 \cong E_2^\vee \otimes \pi^*_n M^{2-2n}$. In particular, $\omega$ restricts to a symplectic form $\omega_2$ on $E_2 \otimes \pi^*_n M^{n-1}$ and the induced Higgs field $\Phi_2$ on $E_2$ is anti-symmetric with respect to it. Hence, $(E_2,\Phi_2)$ is a $\pi_n^*M$-twisted $\SL(2,\C)$-Higgs bundle on $\Sigma/\sigma$. Stability will be discussed in the proof of the following theorem.
\end{proof}

\begin{theo}\label{Theo:isom:Hitchinfibres} Let $\underline{a} \in B_{2n}(X,M)$ be of $\sl(2)$-type and $b_2 \in H^0(\Sigma/\sigma,\pi_n^*M^2)$ the induced section. The holomorphic map
\[ \H^{-1}_{\Sp(2n,\C)}(\underline{a}) \rightarrow \H^{-1}_{\SL(2,\C)}(b_2) 
\] defined in Proposition \ref{prop:pullback:toHit} is a biholomorphism. Its inverse is given by Proposition \ref{prop:pushforward:sp2nC}.
\end{theo}
\begin{proof}
We need to show that the holomorphic maps defined in Proposition \ref{prop:pushforward:sp2nC}  and \ref{prop:pullback:toHit} with $\O(R)^{\frac12}=\pi_n^*M^{n-1}$ are inverse to each other. Let $(E_2,\Phi_2) \in \H^{-1}(b_2)$. By Proposition \ref{prop:pushforward:sp2nC}, $(\pi_{n*}(E_2\otimes \pi_n^*M^{n-1}),\pi_{n*}\Phi_2)$ defines a $\Sp(2n,\C)$-Higgs bundle on $X$ with spectral curve $\Sigma$. We have a natural map 
\[ E_2\otimes \pi_n^*M^{1-n} \rightarrow E_2\otimes \pi_n^*M^{n-1}
\] by multiplying with the canonical section of $\O(R) \cong \pi_n^*M^{2n-2}$. This induces an inclusion
\[ \iota: E_2\otimes \pi_n^*M^{1-n} \rightarrow \pi_n^*\pi_{n*}(E_2\otimes \pi_n^*M^{n-1}).
\] It is clear by construction that the $\im(\iota)=\Ker(\pi_n^*\pi_{n*}\Phi_2^2 +b_2 \id)$. 

For the converse, let $(E,\Phi) \in \H^{-1}_{\Sp(2n,\C)}(\underline{a})$ and denote by $(E_2,\Phi_2)$ the induced $\SL(2,\C)$-Higgs bundle on $\Sigma/\sigma$. It is clear that 
\[ \pi_{n*}(E_2\otimes \pi_n^*M^{n-1}, \Phi_2)\rest_{X^\times} \cong (E,\Phi)\rest_{X^\times},
\] where $X^\times=X\setminus \pi_n(\supp R)$. We are left with showing that this isomorphism extends over the branch points. Let $x \in X$ be a branch point. For simplicity of notation we assume that it corresponds to a ramification point $y \in Y$ of index $n-1$. Let $(V,z)$ resp. $(U,w)$ be coordinate neighbourhoods centred at $y$ resp. $x$, such that the covering is given by $\pi_n: V \rightarrow U, z \mapsto z^n$. We have a local automorphism $\tau: V \rightarrow V, z \mapsto \xi z$, where $\xi$ is a primitive $n$-th root of unity. This automorphism interchanging the sheets induces a local $\Z_k$-action on $\Sigma/\sigma$ at $y$. The pullback $\pi_n^*(E,\Phi) \rest_V$ is invariant by this $\Z_k$-action. 
As explained in the proof of \ref{prop:pushforward:sp2nC}, we can obtain a frame of $\pi_n^*\pi_{n*}(E_2\otimes \pi_n^*M^{n-1},\Phi_2)\rest_{\pi_n^{-1}X^\times}$ at $y$ by extending
\[ (F,\Psi)= (E',\Phi)\rest_{V^\times} \oplus \tau^*(E',\Phi)\rest_{V^\times} \oplus \dots \oplus (\tau^{k-1})^*(E',\Phi)\rest_{V^\times}
\] to a $\tau$-invariant $\SL(2n,\C)$-Higgs bundle at $y$. This is the unique way to do so. Hence, the isomorphism extends over the branch points.

Finally, let us check that this isomorphism preserves stability. If $\Sigma$ is irreducible, there are no Higgs field-invariant subbundles of $E_2$ or $E$ and hence all Higgs bundles in the corresponding $\SL(2,\C)$- resp. $\Sp(2n,\C)$-Hitchin fiber are stable. So let us assume $\Sigma$ is reducible. Being $\sl(2)$-type the spectral curve has two irreducible components $\Sigma=\Sigma_1 \cup \Sigma_2$ interchanged by $\sigma$. Let $L \subset E_2$ be an $\Phi_2$-invariant line bundle, then $V=\pi_{n*}L \otimes\pi_n^*M^{n-1}$ is a $\pi_{n*}\Phi_2$-invariant isotropic subbundle of $E=\pi_{n*}E_2 \otimes\pi_n^*M^{n-1}$ of degree
\begin{align} \deg(V)=\deg\left(\Nm(L) \otimes M^{n(n-1)} \otimes \det(\pi_{n*} \O_{\Sigma/\sigma})\right) = \deg(L), \label{equ:stabl:deg}
\end{align} where we used that 
\begin{align*} \det(\pi_{n*} \O_{\Sigma/\sigma})^2=\O(-B)=\Nm(-R)=M^{2n(1-n)}.
\end{align*} Hence, if $(E,\Phi,\omega)$ is stable, $(E_2,\Phi_2)$ is stable. Furthermore, all $\Phi$-invariant subbundles of $(E,\Phi,\omega)$ are of this form. (In other words, there are two of them corresponding to the irreducible components of $\Sigma$.) Hence, the converse holds true as well.
\end{proof}

\subsection{Semi-abelian spectral data for \texorpdfstring{$\sl(2)$}{sl(2)}-type Hitchin fibres}\label{sec:sp:semi-abel}\ \\
In this section, we apply the results of \cite{Ho1} to $\Sp(2n,\C)$-Hitchin fibres of $\sl(2)$-type. Let us start by defining the twisted Prym varieties, the abelian part of the spectral data.

\begin{defi} Let $p: Y \rightarrow X$ be branched covering of Riemann surfaces. Let $N \in \Pic(X)$. Define
\[ \Prym_N(p):= \Nm_p^{-1}(N),
\] where $\Nm_p: \Pic(Y) \rightarrow \Pic(X)$ is the norm map associated to $p$.
\end{defi}

\begin{lemm} $\Prym_N(p)$  is an abelian torsor over the Prym variety $\Prym_{\O_X}(p) = \Ker(\Nm_p)$, whenever it is non-empty. If $p: Y \rightarrow X$ is two-to-one and $\sigma: Y \rightarrow Y$ the involution interchanging the sheets, then 
\[ \Prym_N \subseteq \{ L \in \Pic(Y) \mid L \otimes \sigma^*L=p^*N\}.
\] If $p$ is not unbranched, this is an equality.
\end{lemm}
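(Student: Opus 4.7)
The torsor assertion is immediate from group theory: since $\Nm_p \colon \Pic(Y) \to \Pic(X)$ is a homomorphism of abelian groups, any non-empty fiber is a coset of $\Ker(\Nm_p) = \Prym_{\O_X}(p)$, so $\Prym_N(p)$ is naturally an abelian torsor over the Prym variety.

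For the two-sheeted case the plan is to exploit the fundamental identity
\[
p^* \Nm_p(L) \;\cong\; L \otimes \sigma^* L
\]
in $\Pic(Y)$, valid for every $L \in \Pic(Y)$. I would verify this on divisors by checking that $p^* p_*(y) = y + \sigma(y)$ for every closed point $y \in Y$: in the unramified case $p^{-1}(p(y)) = \{y, \sigma(y)\}$ as a reduced divisor, while in the ramified case $\sigma(y) = y$ and the scheme-theoretic fiber has multiplicity two, giving $2y = y + \sigma(y)$. Extending linearly and passing to line bundles gives the identity, and specializing to $L \in \Prym_N(p)$ with $\Nm_p(L) = N$ immediately yields $L \otimes \sigma^* L = p^* N$, establishing the displayed inclusion.

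The main difficulty is the converse under the branching hypothesis. Given $L \otimes \sigma^* L = p^* N$, the identity rewrites this as $p^*(\Nm_p(L) \otimes N^{-1}) \cong \O_Y$, reducing the problem to showing $p^* \colon \Pic(X) \to \Pic(Y)$ is injective when $p$ is branched. My approach is to use the canonical $\sigma$-eigendecomposition $p_* \O_Y \cong \O_X \oplus M^{-1}$, where $M \in \Pic(X)$ is the line bundle on $X$ defining the double cover and satisfies $M^{\otimes 2} \cong \O_X(B)$, with $B$ the effective, non-zero branch divisor. For $L \in \Ker(p^*)$ the projection formula yields
\[
L \oplus (L \otimes M^{-1}) \;\cong\; p_* p^* L \;\cong\; p_* \O_Y \;\cong\; \O_X \oplus M^{-1}
\]
as $\O_X$-modules, and Krull--Schmidt for coherent sheaves on the Riemann surface $X$ forces the two direct-sum decompositions to agree as unordered pairs of indecomposable line-bundle summands. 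Thus either $L \cong \O_X$ (the desired conclusion) or $L \cong M^{-1}$ together with $L \otimes M^{-1} \cong \O_X$; the latter forces $M^{\otimes 2} \cong \O_X$ and hence $\O_X(B) \cong \O_X$, contradicting $B \neq 0$.

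The delicate point, and where I expect the argument to be most fragile, is precisely eliminating this second alternative. It is the source of the well-known non-trivial $2$-torsion element of $\Ker(p^*)$ in the étale case, and the branching hypothesis is exactly what makes it vanish; I would want to be careful that $B$ being non-trivial as an effective divisor (and not merely as a line bundle) is what we use at the final step.
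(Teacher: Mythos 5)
Your proof is correct. The paper itself gives no argument here---it declares the torsor statement clear and cites Proposition 5.6 of \cite{Ho1} for the rest---so there is nothing to compare against line by line; your write-up supplies a complete, self-contained proof of exactly what the citation covers. The coset argument for the torsor claim, the divisor-level verification of $p^*\Nm_p(L)\cong L\otimes\sigma^*L$ (including the multiplicity-two fiber at ramification points), and the reduction of the equality statement to injectivity of $p^*$ on $\Pic(X)$ are all the standard route. Your Krull--Schmidt elimination of the alternative $L\cong M^{-1}$ is valid (Atiyah's theorem applies on a compact Riemann surface, and $M^{2}\cong\O_X(B)$ with $B$ effective and nonzero has positive degree, hence is non-trivial), but the ``delicate point'' you flag is in fact settled even more cheaply by a degree count: $L\in\Ker(p^*)$ forces $2\deg L=\deg p^*L=0$, so $\deg L=0$, whereas $\deg M^{-1}=-\tfrac12\deg B<0$ in the branched case. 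Either way the argument closes, and your remark correctly isolates why the hypothesis ``not unbranched'' is essential---in the \'etale case $M^{-1}$ is genuinely a non-trivial $2$-torsion element of $\Ker(p^*)$ and the inclusion is strict.
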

\begin{proof} The first statement is clear. For the second, see \cite{Ho1} Proposition 5.6.
\end{proof}

In the same vein as in \cite{Ho1} for $\SL(2,\C)$, the semi-abelian spectral data will define a stratification of the singular Hitchin fibers. The strata are indexed by so-called Higgs divisors.

\begin{defi} Let $a_{2n} \in H^0(X,K_X^{2n})$. An associated Higgs divisor is a divisor $D \in \Div(X)$, such that $\supp (D) \subset Z(a_{2n})$ and for all $x \in Z(a_{2n})$ 
\[ 0 \leq D_x \leq \tfrac{1}{2} \ord_x(a_{2n}).
\]
\end{defi}

\begin{lemm}\label{lemm:local_form+Higgs_divisor}
Let $\underline{a} \in B_{2n}(X,K_X)$ be of $\sl(2)$-type. Let $(E,\Phi) \in \H^{-1}_{\Sp(2n,\C)}(\underline{a})$ and $x \in Z(a_{2n}) \subset X$ a zero of order $m$. There exists a coordinate neighbourhood $(U,z)$ centred at $x$ and a frame of $E\rest_U$, such that the Higgs field is given by
\[ \Phi=\left( \begin{array}{cc|c} 0 & z^{l_x} &\\ z^{m-l_x} & 0 & \\ \hline & & \phi \end{array}
\right) \d z
\] for some $0 \leq l_x \leq \tfrac{m}{2}$. Here $\phi$ has pointwise non-zero eigenvalues. The Higgs divisor of $(E,\Phi)$ is the divisor
\[ D= D(E,\Phi)=\sum_{x \in Z(a_{2n})} l_x.
\]
\end{lemm}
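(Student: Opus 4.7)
The plan is to transport the local normal form problem from $X$ to the reduced spectral curve $\Sigma/\sigma$ via Theorem \ref{Theo:isom:Hitchinfibres}, apply the analogous $\SL(2,\C)$ local normal form from \cite{Ho1}, and pull it back along the unramified covering near the relevant point.

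First, I would analyse the geometry of $\pi_n : \Sigma/\sigma \to X$ over $x$. Since $a_{2n}$ has a zero of order $m$ at $x$, the proof of Proposition \ref{prop:sp:descr:sl(2)-type_spectral_curve} shows that $a_{2n-2}(x) \neq 0$, so the local spectral polynomial $P(z,\eta) = \eta^n + a_2(z)\eta^{n-1} + \ldots + a_{2n}(z)$ has $\eta = 0$ as a simple root at $z=0$. Hence there is a unique point $y \in \pi_n^{-1}(x)$ with $\eta(y)=0$, and the non-vanishing $\partial_\eta P|_{(0,0)} = a_{2n-2}(0)$ gives, by the implicit function theorem, that $\pi_n$ is unramified at $y$. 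Solving $P=0$ for $\eta$ yields $\eta(z) = -a_{2n}(z)/a_{2n-2}(z) + O(\eta^2)$, which vanishes to order exactly $m$ at $y$. Since $b_2 = \lambda \sigma^*\lambda = -\lambda^2 = -\eta$ locally, the section $b_2 \in H^0(\Sigma/\sigma, \pi_n^*K^2)$ has a zero of order $m$ at $y$.

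Next, by Theorem \ref{Theo:isom:Hitchinfibres}, $(E,\Phi)$ corresponds to an $\SL(2,\C)$-Higgs bundle $(E_2,\Phi_2) \in \H^{-1}_{\SL(2,\C)}(b_2)$ on $\Sigma/\sigma$. Invoking the analogous local normal form for $\SL(2,\C)$-Higgs bundles at zeros of the characteristic section from \cite{Ho1} gives a coordinate $\tilde z$ centred at $y$ and a local frame of $E_2$ in which
\[ \Phi_2 = \begin{pmatrix} 0 & \tilde z^{l_x} \\ \tilde z^{m-l_x} & 0 \end{pmatrix} \d \tilde z, \qquad 0 \leq l_x \leq m/2. \]
Because $\pi_n$ is unramified at $y$, $\tilde z$ descends to a coordinate $z$ on $X$ near $x$ and $\pi_n$ restricts to a biholomorphism $V_1 \to U$ on suitable neighbourhoods. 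The remaining preimages $y_2, \ldots, y_n$ satisfy $\eta(y_i) \neq 0$, so the generalised zero eigenspace of $\Phi(x)$ is exactly two-dimensional and yields a $\Phi$-invariant local splitting $E|_U = E^{(0)} \oplus E^{(\neq 0)}$ with $\phi := \Phi|_{E^{(\neq 0)}}$ having point-wise non-zero eigenvalues. Under the correspondence of Theorem \ref{Theo:isom:Hitchinfibres}, the rank-$2$ block $(E^{(0)}, \Phi|_{E^{(0)}})$ is identified---after accounting for the $\pi_n^*K^{n-1}$-twist of Proposition \ref{prop:pushforward:sp2nC}---with $(E_2, \Phi_2)|_{V_1}$ via the biholomorphism $\pi_n|_{V_1}$, so transporting the frame above produces the desired block form on $X$.

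The main obstacle is the last step: verifying that the line bundle twist and the unramified identification of neighbourhoods interact so that the $\SL(2,\C)$ normal form on $\Sigma/\sigma$ translates without modification into the asserted block form for $\Phi$. This is essentially bookkeeping, but it is the only place where the precise construction of Proposition \ref{prop:pushforward:sp2nC} enters, and one has to make sure that the powers of $z$ and the differential $\d z$ come out with the correct normalisation.
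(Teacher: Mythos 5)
Your argument is correct, but it reaches the normal form by a genuinely different route from the paper. The paper's proof is purely local on $X$: since only two sheets of $\Sigma$ meet over $x$ on the zero section, $0$ is an eigenvalue of $\Phi_x$ of algebraic multiplicity two, so $(E,\Phi)\rest_U$ splits as the direct sum of the rank-$2$ generalised $0$-eigenspace $(E_0,\Phi_0)$ (onto which $\omega$ restricts as a symplectic form, by anti-symmetry of $\Phi$) and a complement on which $\Phi$ has non-zero eigenvalues; one then applies \cite{Ho1} Lemma 5.1 directly to $(E_0,\Phi_0)$, never leaving $X$. You instead route through the global biholomorphism of Theorem \ref{Theo:isom:Hitchinfibres}, apply the same \cite{Ho1} normal form to $(E_2,\Phi_2)$ at the unique point $y\in\pi_n^{-1}(x)$ on the zero section, and transport it back along the sheet through $y$, which is unramified because $\partial_\eta P\rest_{(0,0)}=a_{2n-2}(0)\neq 0$. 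This is legitimate (no circularity: the theorem precedes the lemma and its proof does not use it), and the ``bookkeeping'' you worry about at the end is in fact harmless precisely because $y\notin\supp R$, so the twist $\pi_n^*K^{n-1}\cong\O(R)^{\frac12}$ is locally trivial near $y$ and the sheet through $y$ contributes the block $(E_2,\Phi_2)\rest_{V_1}$ verbatim, with $\d\tilde z=\d z$. What your detour buys is the conceptual point that the $2\times 2$ block \emph{is} the $\SL(2,\C)$-Higgs bundle on $\Sigma/\sigma$ seen through the unramified sheet, which is exactly how the lemma feeds into the stratification theorem; what the paper's route buys is brevity and independence from the fibre identification. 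One shared caveat: your appeal to the proof of Proposition \ref{prop:sp:descr:sl(2)-type_spectral_curve} for $a_{2n-2}(x)\neq 0$ is only literally justified when the point of $\Sigma$ over $x$ on the zero section is singular, i.e.\ $m\geq 2$; for $m=1$ with $a_{2n-2}(x)=0$ more than two sheets can meet at a smooth point of an $\sl(2)$-type spectral curve (cf.\ the remark following that proposition), and the paper's own proof makes the same implicit assumption when it asserts that $0$ has algebraic multiplicity two.
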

\begin{proof}
By assumption 0 is an eigenvalue of $\Phi_x$ of algebraic multiplicity two. Therefore, we can find a coordinate neighbourhood $(U,z)$ centred at $x$, such that $(E,\Phi)\rest_U=(E_0 \oplus E_1, \Phi_0 \oplus \Phi_1)$, where $E_0$ is of rank $2$ with $\Phi_0(x)$ nilpotent and $E_1$ is of rank $2n-2$ with $\Phi_1$ having non-zero eigenvalues. Moreover, by the anti-symmetry of $\Phi$ the symplectic form $\omega$ restricts to a symplectic form on $E_0$ and $E_1$. Now, we can bring $(E_0,\Phi_0)$ in the desired form by \cite{Ho1} Lemma 5.1. 
\end{proof}

For $a_{2n} \in H^0(X,K_X^{2n})$, let
\begin{align*} n_{\even}:=\# \{ x \in Z(a_{2n}) \mid x \text{ zero of even order} \}, \\
n_{\odd}:=\# \{ x \in Z(a_{2n}) \mid x \text{ zero of odd order} \}.
\end{align*} For $D \in \Div^+(X)$ a Higgs divisor associated to $a_{2n}$, let 
\[ n_{\diag}(D):=\#\{ x \in Z(a_{2n}) \mid D_x= \tfrac12 \ord_x(a_{2n}) \}.
\] 

\begin{theo}\label{theo:Sp(2n):strat} Let $\underline{a} \in B_{2n}(X,K_X)$ be of $\sl(2)$-type, such that $\Sigma$ is irreducible and reduced. 
There is a stratification 
\[ \H_{\Sp(2n,\C)}^{-1}(\underline{a})= \bigsqcup_{D} \S_D
\] by locally closed analytic sets $\S_D$ indexed by Higgs divisors associated to $a_{2n}$. If $a_{2n}$ has at least one zero of odd order, every stratum $S_D$ is a holomorphic fiber bundle 
\[ (\C^*)^{r}\times (\C)^{s} \rightarrow \S_D \rightarrow \Prym_{\pi_n^*K_X^{-1}(D)}(\tilde{\pi}_2)
\] with
\[ r=n_{\even}-n_{\diag}(D), \quad r+s=2n(g-1)  - \deg(D)-\frac{n_{\odd}}{2}.
\] 
If all zeros of $a_{2n}$ have even order, $\tilde{\pi}_2$ is unbranched and each stratum $\S_D$ is a $2:1$-branched covering of a holomorphic $(\C^*)^{r}\times (\C)^{s}$-bundle over 
\[ \Prym_{I\pi_n^*K_X^{-1}(D)}(\tilde{\pi}_2).
\] with $r,s$ given by above formulae. Here, $I$ denotes the unique non-trivial line bundle on $\Sigma/\sigma$, such that $\tilde{\pi}_2^*I=\O_{\tilde{\Sigma}}$. In both cases, 
\[ \dim \S_D = (2n^2+n)(g-1)-\deg(D).
\]
\end{theo}
\begin{proof}
This is a direct consequence of Theorem \ref{Theo:isom:Hitchinfibres} and the stratification result for singular fibres of $\SL(2,\C)$-Hitchin systems with irreducible and reduced spectral curve in \cite{Ho1} Theorem 5.13. The dimension of the twisted Prym varieties is given by 
\[ \dim \Prym(\tilde{\pi}_2)=g(\tilde{\Sigma}) - g(\Sigma/\sigma)=n(2n-1)(g-1)+ \frac{n_{\odd}}{2}.
\] 
\end{proof}

\begin{theo}\label{theo:Sp(2n):global_fibreing} Let $\underline{a}=(a_2, \dots, a_{2n}) \in B_{2n}(X,K_X)$ be of $\sl(2)$-type, such that
$a_{2n}\in H^0(X,K_X^{2n})$ has only zeros of odd order. Then $\H_{\Sp(2n,\C)}^{-1}(\underline{a})$ is a holomorphic fiber bundle over $\Prym_{\pi_n^*K_X^{-1}}(\tilde{\pi}_2)$ with fibres given by the compact moduli of Hecke parameters described in \cite{Ho1} Section 7. 
\end{theo}
\begin{proof}
This is a direct consequence of \cite{Ho1} Theorem 7.13.
\end{proof}

Putting together \cite{Ho1} Corollary 7.14, 7.16 and Example 8.3, 8.5 we obtain:
\begin{exam}\label{exam:sp(2n,C)_first_deg} Let $\underline{a}=(a_2, \dots, a_{2n}) \in B_{2n}(X,K_X)$ be of $\sl(2)$-type. Let $a_{2n}$ have $k_l$ zero of order $l$ for $l \in \{2,3,4,5\}$ and at least one zero of odd order. Then up to normalisation $\H_{\Sp(2n,\C)}^{-1}(\underline{a})$ is given by a holomorphic fibre bundle 
\[ (\P^1)^{k_2+k_3} \times (\P(1,1,2))^{k_4+k_5} \rightarrow \H_{\Sp(2n,\C)}^{-1}(\underline{a}) \rightarrow \Prym_{\pi_n^*K_X^{-1}}(\tilde{\pi}_2).
\]
\end{exam}

\begin{theo}\label{theo:fibrebdle_trivial} The fiber bundles over abelian varieties appearing in Theorem \ref{theo:Sp(2n):strat}, \ref{theo:Sp(2n):global_fibreing} and Example \ref{exam:sp(2n,C)_first_deg} are smoothly trivial.
\end{theo}
\begin{proof}
This will be proved in Section \ref{sec:lim} using analytic techniques. 
\end{proof}

\begin{coro}\label{coro:irred_comp} Let $\underline{a}=(a_2, \dots, a_{2n}) \in B_{2n}(X,K_X)$ of be $\sl(2)$-type, such that
$a_{2n}\in H^0(X,K_X^{2n})$ has at least one zero of odd order, then $\H_{\Sp(2n,\C)}^{-1}(\underline{a})$ is an irreducible complex space. If all zero of $a_{2n}$ have even order, then $\H_{\Sp(2n,\C)}^{-1}(\underline{a})$ is connected and has four irreducible components.
\end{coro}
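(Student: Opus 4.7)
The plan is to reduce to studying the open dense stratum in the stratification of Theorem \ref{theo:Sp(2n):strat}. The dimension formula $\dim \S_D = (2n^2+n)(g-1)-\deg(D)$ singles out the stratum $\S_0$ indexed by the trivial Higgs divisor $D=0$ as the unique stratum of top dimension, so $\S_0$ is open and dense in $\H^{-1}_{\Sp(2n,\C)}(\underline{a})$. Every lower stratum $\S_D$ lies in the closure of $\S_0$: one sees this by letting coordinates of the $(\C^*)^r\times\C^s$-factor in Theorem \ref{theo:Sp(2n):global_fibreing} and Example \ref{exam:sp(2n,C)_first_deg} degenerate to $0$ or $\infty$, which strictly raises the Higgs divisor. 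Hence the irreducible components of the full fibre are exactly those of $\overline{\S_0}$, and connectedness will follow once one verifies that the components of $\overline{\S_0}$ all meet through some lower stratum.

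In the first case, when at least one zero of $a_{2n}$ is of odd order, $\tilde{\pi}_2:\tilde{\Sigma}\to\Sigma/\sigma$ is a genuinely branched double cover by Riemann--Hurwitz, and its norm kernel $\Ker(\Nm_{\tilde{\pi}_2})$ is a connected abelian variety. Consequently the twisted Prym torsor $\Prym_{\pi_n^*K^{-1}}(\tilde{\pi}_2)$ is connected. Since $\S_0$ is a holomorphic $(\C^*)^r\times\C^s$-bundle over this connected smooth base with connected smooth fibre, $\S_0$ is smooth and connected, hence irreducible. By density $\H^{-1}_{\Sp(2n,\C)}(\underline{a})$ is irreducible.

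In the second case, when all zeros of $a_{2n}$ are even, $\tilde{\pi}_2$ is étale; for an étale double cover the kernel of the norm map has exactly two connected components, so $\Prym_{I\pi_n^*K^{-1}}(\tilde{\pi}_2)$ is a disjoint union of two abelian torsors. The associated $(\C^*)^r\times\C^s$-bundle inherits two connected components, and the $2:1$ branched covering from Theorem \ref{theo:Sp(2n):strat} must split into two étale sheets over each of them to produce the desired four irreducible components. Establishing this splitting is the main obstacle; I would verify it by identifying the deck transformation of the cover with a free $\Z_2$-action on the Prym torsor coming from the ambiguity in the square-root choice of Proposition \ref{prop:pushforward:sp2nC} together with the action of the $2$-torsion bundle $I$. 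Once the four smooth irreducible components of $\S_0$ are established, connectedness of $\H^{-1}_{\Sp(2n,\C)}(\underline{a})$ follows because Hecke-parameter degenerations into the lower strata $\S_D$ bridge both the two Prym components and the two covering sheets, gluing the four pieces into a single connected variety.
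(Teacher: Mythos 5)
Your strategy---identify the unique top-dimensional stratum $\S_0$, show it is dense, and read off the components from the connectivity of the Prym torsor and of the $2:1$ cover---is the right skeleton, but it is not what the paper does, and the load-bearing steps are exactly the ones you leave open. The paper's proof is a one-line reduction: by Theorem \ref{Theo:isom:Hitchinfibres} the fibre is biholomorphic to an $\SL(2,\C)$-Hitchin fibre on $\Sigma/\sigma$, and the irreducibility/component count is then quoted directly from \cite{Ho1} (Corollary 8.6 and Theorem 8.8). Everything you are attempting to establish by hand---density of $\S_0$, the closure relations between strata, the splitting of the $2:1$ cover, and the gluing of components through lower strata---is precisely the content of those cited results, so your proposal amounts to re-proving them, and it does not.

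Concretely: (1) the claim that every $\S_D$ lies in $\overline{\S_0}$ is justified only by ``let a coordinate degenerate to $0$ or $\infty$''; Theorem \ref{theo:Sp(2n):global_fibreing} and Example \ref{exam:sp(2n,C)_first_deg} cover only the all-odd case, resp.\ zeros of order $\le 5$, so for general $a_{2n}$ this needs the full closure analysis of \cite{Ho1} Sections 7--8. (2) In the even case you concede that the splitting of the $2:1$ branched cover of Theorem \ref{theo:Sp(2n):strat} into four pieces is ``the main obstacle'', i.e.\ the four-component count is not proved; worse, your proposed repair misidentifies the source of the two-fold ambiguity, which does not come from the square root $\O(R)^{\frac12}$ in Proposition \ref{prop:pushforward:sp2nC} (that choice is fixed once and for all) but from the fact that $(E_2,\Phi_2)$ and $(E_2\otimes I,\Phi_2)$ have the same pullback to $\tilde{\Sigma}$ when $\tilde{\pi}_2$ is unbranched, so that the passage to spectral data is generically two-to-one. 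A genuinely branched $2:1$ cover of an irreducible base need not split, so the component count really does require this identification. (3) Connectedness in the even case requires that the closures of the four components of $\S_0$ actually intersect in lower strata; your assertion that Hecke degenerations ``bridge both the two Prym components and the two covering sheets'' is exactly the twisted-gluing computation (the eigen line bundles at $0$ and $\infty$ of a collapsing $\C^*$-coordinate differ by an explicit line bundle, and one must check that this twist interchanges the components of $\Ker(\Nm_{\tilde{\pi}_2})$), and you do not carry it out. The cleanest fix is to do what the paper does: invoke Theorem \ref{Theo:isom:Hitchinfibres} and cite the $\SL(2,\C)$-statements.
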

\begin{proof}
This follows from \cite{Ho1} Corollary 8.6 and Theorem 8.8.
\end{proof}

\begin{rema} Notice that the identification of Hitchin fibres in Theorem \ref{Theo:isom:Hitchinfibres} is not restricted to $\sl(2)$-type Hitchin fibres with irreducible and reduced spectral curve. In particular, the parametrization of singular Hitchin fibres with reducible  spectral curve in \cite{GO13} Section 7 describes certain $\sl(2)$-type Hitchin fibres of the $\Sp(2n,\C)$-Hitchin system, for all $n \in \N$.
\end{rema}


\section{\texorpdfstring{$\sl(2)$}{sl(2)}-type fibres of odd orthogonal Hitchin systems} \label{sec:SO}
\subsection{The \texorpdfstring{$\SO(2n+1,\C)$}{SO(2n+1,C)}-Hitchin system}
Let $G=\SO(2n+1,\C)$ and 
\begin{align*} \so(2n+1,\C)= \left\{ A \in ´\Mat(n \times n , \C) \mid A^{\tr}J_{2n+1} + J_{2n+1}A=0  \right\},
\end{align*} where 
\[ J_{2n+1}=\begin{pmatrix} 0 & \id_n & 0 \\ \id_n & 0 & 0 \\ 0 & 0 & 1 \end{pmatrix}.
\]
Then a Cartan subalgebra is given by 
\[ \h= \{H=\diag(h_1, \dots, h_n,-h_1,\dots,-h_n,0) \mid h_i \in \C \}.
\]
Define by $e_i \in \h^\vee$ by $e_i(H)=h_i$. Then a root system is given by 
\[ \Delta=\{ \pm e_i \pm e_j \mid 1\leq i,j \leq n, i \neq j \} \cup \{ \pm e_i \mid 1 \leq i \leq n \}.
\] As before the $\so(2n+1,\C)$-discriminant decomposes by the length of the roots
\[ \disc_{\so}=\prod\limits_{i=1}^{n} -e_i^2 \disc_{\so}^{\red}, \quad \text{where} \quad \disc_{\so}^{\red}=\prod_{i \neq j } -(e_i\pm e_j)^2.
\] The characteristic polynomial of $A \in \so(2n+1,\C)$ has the form
\[ \lambda(\lambda^{2n} + a_2 \lambda^{2n-2} + \dots + a_{2n}).
\] The coefficients $a_2, \dots, a_{2n}$ form a basis of the invariant polynomials $\C[\g]^G$. 

\begin{defi}
An $M$-twisted $\SO(m,\C)$-Higgs bundle is a triple $(E,\Phi,\omega)$ of a 
\begin{itemize}
\item[i)] holomorphic vector bundle $E$ of rank $m$ with $\det(E) \cong \O_X$,
\item[ii)] a holomorphic non-degenerate symmetric bilinear form $\omega \in H^0(X,S^2 E^\vee)$, and
\item[iii)] a Higgs field $\Phi \in H^0(X,\End(E) \otimes M)$, such that $\omega(\Phi \, \cdot, \cdot)=-w(\cdot,\Phi\, \cdot)$. 
\end{itemize}
$(E,\Phi,\omega)$ is called stable, if for all isotropic $\Phi$-invariant subbundles $0 \neq F \subsetneq E$
\[ \deg(F) < 0.
\] (see \cite{GGM09} for this simplified stability condition).
\end{defi}
Let $\M_{\SO(m,\C)}(X,M)$ be the moduli space of stable $M$-twisted $\SO(m,\C)$-Higgs bundles on $X$. For $m=2n+1$ the Hitchin map is given by
\begin{align*}
\H_{\SO(2n+1,\C)}: \M_{\SO(2n+1,\C)}(X,M) &\rightarrow B_{\SO(2n+1,\C)}(X,M):=\bigoplus\limits_{i=1}^n H^0(X,M^{2i}), \\
(E,\Phi,\omega) \quad &\mapsto \quad (a_2(\Phi), \dots , a_{2n}(\Phi)).
\end{align*} In particular, we observe that $B_{\SO(2n+1,\C)}(X,M)=B_{2n}(X,M)$. Let $(E,\Phi,\omega) \in \H_{\SO(2n+1,\C)}^{-1}(a_2, \dots, a_{2n})$, then the characteristic polynomial of $\Phi$ is given by 
\[ \lambda(\lambda^{2n} + a_{2n-2} \lambda^{2n-2} + \dots + a_{2n}).
\] Hence, the spectral curve decomposes in two irreducible components $\underline{0} \cup \Sigma$, where $\underline{0}$ is the image of the zero section in $M$ and $\Sigma$ is the $\Sp(2n,\C)$-spectral curve associated to $(a_2,\dots,a_n)$.

\begin{defi}\label{def:sl(2)-type_SO} An element of the Hitchin base $\underline{a} \in B_{\SO(2n+1,\C)}(X,M)$ is called of $\sl(2)$-type, if $\Sigma/\sigma$ is smooth. In this case, the corresponding Hitchin fibre $\H^{-1}_{\SO(2n+1,\C)}(\underline{a})$ is called of $\sl(2)$-type. A $M$-twisted $\SO(2n+1,\C)$-Higgs bundles is of $\sl(2)$-type, if it is contained in a $\sl(2)$-type Hitchin fibre.
\end{defi}

From Lemma \ref{lem:sp:discr} and Proposition \ref{prop:sp:descr:sl(2)-type_spectral_curve}, we immediately have
\begin{lemm}\label{lem:so(2n+1):discr} Let $\underline{a} \in B_{2n}(X,M)$. If all zeros of $\disc_{\so}(\underline{a}) \in H^0(X,M^{2n^2})$ are simple, then $\Sigma$ is smooth. If $\disc_{\so}^{\red}(\underline{a}) \in H^0(X,M^{2n(n-1)})$ has simple zeros, then $\underline{a}$ is of $\sl(2)$-type.
\end{lemm}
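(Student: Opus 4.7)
The plan is to reduce both statements to the symplectic analogues Lemma \ref{lem:sp:discr} and Proposition \ref{prop:sp:descr:sl(2)-type_spectral_curve}. The key observation is that the non-trivial component $\Sigma$ of the $\SO(2n+1,\C)$-spectral curve is cut out by exactly the same polynomial $\eta^{2n}+a_2\eta^{2n-2}+\dots+a_{2n}=0$ in $\mathsf{Tot}\,M$ as the $\Sp(2n,\C)$-spectral curve attached to the same $\underline{a}$, and the $\sl(2)$-type condition is defined identically as smoothness of $\Sigma/\sigma$. Thus the geometric questions ``$\Sigma$ is smooth'' and ``$\underline{a}$ is of $\sl(2)$-type'' are literally the same problem in the symplectic and odd orthogonal settings; only the invariant polynomial that detects them differs.

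My first step would be to compare the two discriminants. The roots $\pm e_i\pm e_j$ appear as the short roots of $C_n$ and simultaneously as the long roots of $B_n$, which identifies the two reduced discriminants up to a nonzero scalar, $\disc_{\sp}^{\red}=c_1\disc_{\so}^{\red}$. The remaining factors $\prod_i(-4e_i^2)$ and $-\prod_i e_i^2$ are both nonzero scalar multiples of $a_{2n}$, using $\prod_i e_i^2=(-1)^n a_{2n}$. Therefore $\disc_{\sp}(\underline{a})$ and $\disc_{\so}(\underline{a})$ differ by a nonzero constant and in particular have the same zero divisor on $X$. The first claim is then immediate from Lemma \ref{lem:sp:discr}.

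For the second claim I would apply Proposition \ref{prop:sp:descr:sl(2)-type_spectral_curve}, which requires in addition to simple zeros of $\disc_{\sp}^{\red}$ the condition $Z(a_{2n-2})\cap Z(a_{2n})=\varnothing$. The step I expect to need the most care is verifying that this extra condition is automatic under the simple-zero hypothesis. The idea is: the formula $a_{2n-2}=(-1)^{n-1}\sum_i\prod_{k\ne i}h_k^2$ shows that at a zero of $a_{2n}$ the section $a_{2n-2}$ vanishes only if at least two eigenvalue pairs $(h_i,-h_i)$ and $(h_j,-h_j)$ vanish there. Locally this forces the factor $(h_i^2-h_j^2)^2$ in $\disc_{\sp}^{\red}=\prod_{i<j}(h_i^2-h_j^2)^2$ to vanish to order at least two, contradicting simplicity. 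With $Z(a_{2n-2})\cap Z(a_{2n})=\varnothing$ established, Proposition \ref{prop:sp:descr:sl(2)-type_spectral_curve} delivers the $\sl(2)$-type conclusion.
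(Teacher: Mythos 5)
Your reduction of both statements to the symplectic setting is the right idea, and your treatment of the first claim is correct: the long-root factor of $\disc_{\sp}$ and the short-root factor of $\disc_{\so}$ are both nonzero scalar multiples of $a_{2n}$, the remaining root factors coincide, so $\disc_{\sp}(\underline{a})$ and $\disc_{\so}(\underline{a})$ have the same zero divisor and Lemma \ref{lem:sp:discr} applies verbatim to the component $\Sigma$.

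The second claim is where there is a genuine gap. The implication you rely on --- that simple zeros of the reduced discriminant force $Z(a_{2n-2})\cap Z(a_{2n})=\varnothing$ --- is false. Your order-of-vanishing argument treats the eigenvalue branches as single-valued holomorphic functions of the base coordinate; at a branch point they are not, and $(h_i^2-h_j^2)^2=(\eta_i-\eta_j)^2$ is precisely the single-valued combination that can vanish to order exactly one even though both branches $\eta_i=h_i^2$ and $\eta_j=h_j^2$ vanish there. Concretely, for $n=2$ take locally $a_2=z$ and $a_4=\tfrac14(z^2-z)$: then $\disc_{\sp}^{\red}=a_2^2-4a_4=z$ has a simple zero at $z=0$ while $a_2(0)=a_4(0)=0$; the two roots $\eta_{1,2}=\tfrac12(-z\pm\sqrt{z})$ both vanish at $0$, yet $(\eta_1-\eta_2)^2=z$. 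So you cannot reach the hypotheses of the second part of Proposition \ref{prop:sp:descr:sl(2)-type_spectral_curve}; those hypotheses are sufficient but not necessary (in the example above $\Sigma/\sigma$ is nevertheless smooth, as one checks that $\partial_\eta=2\eta+z$ and $\partial_z=\eta+\tfrac14(2z-1)$ have no common zero). The correct route is more direct: $\disc_{\sp}^{\red}=\prod_{i<j}(\eta_i-\eta_j)^2$ is exactly the discriminant of the reduced characteristic polynomial $\eta^n+a_2\eta^{n-1}+\dots+a_{2n}$, whose zero locus in $\mathsf{Tot}(M^2)$ is $\Sigma/\sigma$. Now run the argument of Lemma \ref{lem:sp:discr} on this curve: where $\disc_{\sp}^{\red}\neq 0$ the $n$ sheets of $\pi_n$ are distinct and the curve is smooth; at a simple zero exactly two roots collide and do so simply, so after completing the square the local equation becomes $\tilde{\eta}^2=(\text{unit})\cdot z$ and $\Sigma/\sigma$ is a smooth simple ramification point. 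This yields the $\sl(2)$-type conclusion with no auxiliary condition on $a_{2n-2}$.
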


Hence, the descriptions and properties of $\sl(2)$-type spectral curves in Section \ref{sec:sl(2)-type} carry over to $\sl(2)$-type Hitchin fibres of the odd orthogonal Hitchin system by adding the irreducible component $\underline{0}$. 

\subsection{Odd orthogonal \texorpdfstring{$\sl(2,\C)$}{sl(2,C)}-type fibres as fibres of an \texorpdfstring{$\SO(3,\C)$}{SO(3,C)}-Hitchin map}\label{ssec:so:identfibre}

\begin{lemm}\label{lem:so:local} Let $(E,\Phi, \omega) \in \M_{\SO(2n+1,\C)}(X,M)$ be of $\sl(2)$-type. Let $p \in Z(\det(\Phi))$ be a zero of order $m$, then there exists a coordinate neighbourhood $(U,z)$ centred at $p$ and an orthogonal splitting $(E,\Phi)\rest_U=(V_0 \oplus V_1, \Phi_0 \oplus \Phi_1)$, such that $V_0$ is of rank $3$ and $\Phi_0(p)$ is nilpotent, and $V_1$ is of rank $2n-2$ containing the eigenspaces to eigenvalues $\lambda$ with $\lambda(p) \neq 0$. There exists a orthogonal frame of $V_0\rest_U$, such that 
\[ \Phi_0(z)= z^{l_p} \begin{pmatrix} 0 & 1-z^{m-2l_p} & 0 \\ z^{m-2l_p}-1 & 0 & i(z^{m-2l_p}+1) \\ 0 & -i(z^{m-2l_p}+1) & 0 \end{pmatrix}  \d z.
\]
\end{lemm}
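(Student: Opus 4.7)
The plan mirrors the proof of Lemma~\ref{lemm:local_form+Higgs_divisor} in the symplectic case: first extract a rank-three $\omega$-orthogonal summand of $(E,\Phi)$ near $p$ that carries the generalised zero eigenvalue of $\Phi$, and then normalise the Higgs field on this summand using the local isomorphism $\so(3,\C)\cong \sl(2,\C)$ together with the $\SL(2,\C)$ normal form of \cite{Ho1}~Lemma~5.1.

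For the local splitting, the $\sl(2)$-type hypothesis combined with Proposition~\ref{prop:sp:descr:sl(2)-type_spectral_curve} applied to the $\Sp(2n,\C)$-component of the spectral curve says that exactly two sheets of $\Sigma$ meet the zero section at $p$. Together with the automatic component $\underline{0}$ of the $\SO$-spectral curve, the characteristic polynomial of $\Phi$ factors in a coordinate disk $(U,z)$ around $p$ as
\[ \chi_\Phi(\lambda,z) \;=\; \lambda\bigl(\lambda^2 + c(z)\bigr)\,\tilde{s}(\lambda^2,z) \]
with $c$ vanishing to order $m$ at $z=0$ and $\tilde{s}(0,0)\neq 0$. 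Since $\lambda(\lambda^2+c)$ and $s(\lambda,z):=\tilde{s}(\lambda^2,z)$ are coprime in $\O_U[\lambda]$, Bezout combined with Cayley--Hamilton produces a holomorphic direct sum decomposition $E|_U=V_0\oplus V_1$ of ranks $3$ and $2n-2$, with $\Phi$ preserving each summand. The splitting is $\omega$-orthogonal: $s(\Phi)$ is a polynomial in $\Phi^2$, hence self-adjoint for $\omega$; it annihilates $V_1$ and acts invertibly on $V_0$, so for $v\in V_0$, $w\in V_1$ one computes $\omega(v,w)=\omega(s(\Phi)^{-1}v,s(\Phi)w)=0$. Non-degeneracy of $\omega$ on $E$ then forces its restrictions to $V_0$ and $V_1$ to be non-degenerate.

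To normalise $\Phi_0:=\Phi|_{V_0}$, note that the rank-three symmetric bundle $(V_0,\omega_0)$ is trivial on the disk $U$, and since $U$ is simply connected the $\SO(3,\C)$-structure lifts through $\SL(2,\C)\to\SO(3,\C)$ to an $\SL(2,\C)$-Higgs bundle $(W,\Phi_2)$ on $U$, with $V_0$ the bundle of traceless endomorphisms of $W$, $\omega_0$ induced by the trace form, and $\Phi_0=\ad(\Phi_2)$. Comparing characteristic polynomials gives $\det(\Phi_2)=-c(z)/4$, which vanishes to order $m$, so \cite{Ho1}~Lemma~5.1 supplies a holomorphic frame of $W$ in which
\[ \Phi_2 \;=\; \begin{pmatrix} 0 & z^{l_p}\\ z^{m-l_p} & 0\end{pmatrix}\, dz, \qquad 0\leq l_p\leq m/2, \]
for a unique integer $l_p$. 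A direct computation of $\ad(\Phi_2)$ in the trace-orthonormal basis $H/\sqrt{2}$, $(E+F)/\sqrt{2}$, $i(E-F)/\sqrt{2}$ of $\sl(2,\C)$ then reproduces exactly the matrix claimed in the statement.

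The main obstacle is the first step: one has to verify carefully that the coprime factorisation of $\chi_\Phi$ yields a genuine holomorphic subbundle decomposition of constant ranks, with no Jordan mixing across the $V_0$- and $V_1$-eigenvalues as $z$ varies. Both requirements follow from $\tilde{s}(0,0)\neq 0$ and from the $\sl(2)$-type local description of $\Sigma$, but deserve explicit verification. Once the decomposition is in place, the reduction on $V_0$ via the $\SL(2,\C)$ lift is routine, and the final change of basis reproducing the prescribed $3\times 3$ matrix, including the factors of $i$, is a direct computation.
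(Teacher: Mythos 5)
Your proof is correct and follows essentially the same route as the paper: split off the rank-three nilpotent summand orthogonally, lift $(V_0,\omega_0,\Phi_0)$ through $\SL(2,\C)\to\PSL(2,\C)\cong\SO(3,\C)$ over the disk, apply the $\SL(2,\C)$ normal form of \cite{Ho1} Lemma 5.1, and transport via $\ad$ into an orthonormal frame. You supply more detail than the paper on the existence of the orthogonal splitting (which the paper dispatches with ``by construction''), and the only slip is the immaterial sign $\det(\Phi_2)=c/4$ rather than $-c/4$, which does not affect the order of vanishing used to invoke the normal form.
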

\begin{proof}
By construction $(V_0,\Phi_0)$ is a $\mathsf{O}(3,\C)$-Higgs bundle on $U$. Due to the exceptional isomorphism $\SO(3,\C) \cong \PSL(2,\C)$ the Higgs field $\Phi_0$ can be obtained as $\ad(\Psi)$ for a $\SL(2,\C)$-Higgs field $\Psi$ (cf. Section \ref{sec:langlands}). By \cite{Ho1} Lemma 5.1, we can find a local frame, such that 
\[ \Psi = \begin{pmatrix} 0 & z^{l_p} \\ z^{m-l_p} & 0 \end{pmatrix} \d z.
\] With respect to the induced local frame of $V_0$ the Higgs field $\Phi$ is given by 
\[ \Phi= \ad(\Psi)=\begin{pmatrix} 0 & -z^{l_p} & 0 \\ -z^{m-l_p} & 0 & z^{l_p}\\ 0 & z^{m-l_p} & 0 \end{pmatrix} \d z
\] and the orthogonal structure induced by the Killing form by
\[ \begin{pmatrix} && 1 \\ &1 & \\ 1 && \end{pmatrix}.
\] Choosing an orthogonal frame we obtain the desired form. 
\end{proof}

\begin{defi} Let $(E,\Phi, \omega) \in \M_{\SO(2n+1,\C)}(X,M)$ be of $\sl(2)$-type. The Higgs divisor of $(E,\Phi, \omega)$ is the divisor 
\[ D(E,\Phi,\omega):= \sum_{p \in Z(a_{2n})} l_p,
\] where $l_p$ is defined by the previous lemma. 
\end{defi}

\begin{lemm}\label{lemm:so:kernel_Hecketrafo}
Let $(E,\Phi,\omega) \in \M_{\SO(2n+1,\C)}(X,M)$ be of $\sl(2)$-type and $D$ its Higgs divisor, then 
\begin{itemize} \item[i)]$\Ker(\Phi) \cong M^{-n}(D)$ and $\omega\rest_{\Ker(\Phi)}=\frac{a_{2n}}{s_D^2} \in H^0(X,M^{2n}(-2D))$, where $s_D$ denotes the canonical section of $\O(D)$.
\item[ii)] there is an exact sequence of coherent sheaves
\[ 0 \rightarrow \O(\Ker(\Phi) \oplus \Ker(\Phi)^{\perp}) \rightarrow \O(E) \rightarrow \mathcal{T} \rightarrow 0,
\] where $\mathcal{T}$ is a torsion sheaf with $\det(\mathcal{T}) \cong \O(\Lambda-2D)$. 
\item[iii)] $(E,\Phi,\omega)$ is uniquely determined by $D$ and 
\[ \left(\ker(\Phi)^\perp,\Phi \rest_{\Ker(\Phi)^{\perp}}, \omega \rest_{\Ker(\Phi)^{\perp}} \right).
\] 
\end{itemize}
\end{lemm}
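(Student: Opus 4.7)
The plan is to reduce all three claims to the explicit local normal form from Lemma~\ref{lem:so:local}, tracking the symmetric form $\omega$, and then to pass to global conclusions via $\omega$-orthogonality of eigenspaces and Cayley--Hamilton. Generically on $X\setminus Z(a_{2n})$ the Higgs field has $0$ as a simple eigenvalue, so the sheaf-theoretic kernel is generically of rank one; its saturation is a line subbundle $K\subset E$. The identity $\omega(\Phi\cdot,\cdot)=-\omega(\cdot,\Phi\cdot)$ forces distinct eigenspaces of $\Phi$ to be $\omega$-orthogonal, so $\omega|_K$ is non-degenerate away from $Z(a_{2n})$, and $K\subset E$ is $\omega$-non-degenerate off the zero locus of $a_{2n}$.

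For part i), I would evaluate $\omega|_K$ in the local chart of Lemma~\ref{lem:so:local}. Its kernel is spanned by $v=(i(z^{m_p-2l_p}+1),0,1-z^{m_p-2l_p})$, and in the standard orthogonal frame one computes $\omega(v,v)=-4z^{m_p-2l_p}$. Since $\omega|_K$ is a regular section of $K^{-2}$ vanishing precisely to order $m_p-2l_p=\ord_p(a_{2n}/s_D^2)$ at each $p\in Z(a_{2n})$, summing yields $K^{-2}\cong M^{2n}(-2D)$ with $\omega|_K$ corresponding to $a_{2n}/s_D^2$. To upgrade this to $K\cong M^{-n}(D)$, I would use $Q(\Phi):=\Phi^{2n}+a_2\Phi^{2n-2}+\dots+a_{2n}\id$. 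Cayley--Hamilton gives $\Phi\,Q(\Phi)=0$, so $Q(\Phi)$ takes values in $K\otimes M^{2n}$; non-zero eigenspaces of $\Phi$ satisfy $Q(\lambda)=0$ by the characteristic polynomial, so $Q(\Phi)$ vanishes on $K^\perp$ and descends to a canonical section $\alpha\colon K^\vee\cong E/K^\perp \to K\otimes M^{2n}$, i.e.\ a section of $K^{2}\otimes M^{2n}$. The identity $Q(\Phi)|_K=a_{2n}\id$ then forces $\alpha=s_D^{2}$ under the induced identification $K^2\otimes M^{2n}\cong \O(2D)$, pinning down the square root as $K\cong M^{-n}(D)$.

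For part ii), the inclusion $K\oplus K^\perp\hookrightarrow E$ is an isomorphism on $X\setminus Z(a_{2n})$ by the non-degeneracy just observed, hence its cokernel $\mathcal T$ is torsion supported on $Z(a_{2n})$. Locally at $p$, I would expand in the standard frame of $V_0$ using the basis $\{v,\,e_2,\,u_2\}$ of $K\oplus K^\perp$ with $u_2=(1-z^{m_p-2l_p},0,-i(z^{m_p-2l_p}+1))$; the change-of-basis determinant is $4z^{m_p-2l_p}$, so the length of $\mathcal T$ at $p$ equals $m_p-2l_p$ and $\det(\mathcal T)\cong \O(\div(a_{2n})-2D)$, which is the stated identity with $\Lambda=\div(a_{2n})$.

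For part iii), reconstruction proceeds by reversing the Hecke modification. Given $D$ and the triple $(K^\perp,\Phi|_{K^\perp},\omega|_{K^\perp})$, set $K:=M^{-n}(D)$ with its tautological form $a_{2n}/s_D^2$, form the degenerate orthogonal Higgs bundle $(K\oplus K^\perp,\,0\oplus\Phi|_{K^\perp},\,\omega|_K\oplus\omega|_{K^\perp})$, and recover $E$ as the unique Hecke modification whose local cokernel has length $m_p-2l_p$ at each $p\in Z(a_{2n})$ and restores non-degeneracy of the symmetric form. The main obstacle is proving the uniqueness of this Hecke modification; I expect to resolve it by invoking Lemma~\ref{lem:so:local} once more, since the local $\SO(3,\C)\cong\PSL(2,\C)$ identification used in its proof shows that the Hecke data at $p$ is entirely encoded by the integer $l_p$, with no hidden non-abelian parameters to fix.
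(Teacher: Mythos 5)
Your local computations and part ii) are sound and follow essentially the paper's route: the kernel vector $v=(i(z^{m_p-2l_p}+1),0,1-z^{m_p-2l_p})$ in the frame of Lemma~\ref{lem:so:local} does satisfy $\omega(v,v)=-4z^{m_p-2l_p}$, the change-of-basis determinant for $\{v,e_2,u_2\}$ is $4z^{m_p-2l_p}$, and the cokernel of $\Ker(\Phi)\oplus\Ker(\Phi)^{\perp}\hookrightarrow E$ is therefore torsion of length $m_p-2l_p$ at each $p$, giving $\det(\mathcal T)\cong\O(\Lambda-2D)$ and $\omega\rest_{\Ker(\Phi)}=a_{2n}/s_D^2$.

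The genuine gap is in your identification $\Ker(\Phi)\cong M^{-n}(D)$ in part i). Your Cayley--Hamilton section $\alpha\in H^0(X,\Ker(\Phi)^{2}\otimes M^{2n})$ is well defined and does have divisor $2D$ (one has $a_{2n}=\alpha\cdot\omega\rest_{\Ker(\Phi)}$), but a canonical section of the \emph{square} with divisor $2D$ only yields $\Ker(\Phi)^{2}\cong M^{-2n}(2D)$, which you already knew from $\omega\rest_{\Ker(\Phi)}$. On a curve of genus $g\geq 2$ there are $2^{2g}$ line bundles with the same square, so nothing in your argument excludes $\Ker(\Phi)\cong M^{-n}(D)\otimes J$ for a nontrivial $2$-torsion point $J\in\Jac(X)$. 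What is needed is a canonical nonvanishing section of $\Ker(\Phi)\otimes M^{n}(-D)$ itself; the paper (following Hitchin) obtains one by writing $\bigwedge^{n}\omega(\Phi\,\cdot,\cdot)$ as the contraction of the volume form of $\det(E)\cong\O_X$ against a holomorphic section $v_0\in H^0(X,E\otimes M^{n})$, and checking in the local frame that $v_0$ vanishes exactly on $D$ and spans the kernel. Separately, in part iii) your appeal to the $\PSL(2,\C)$ picture having ``no hidden non-abelian parameters'' does not prove uniqueness of the orthogonality-restoring Hecke modification (indeed the $\SL(2,\C)$ Hecke data at these points is exactly where the non-abelian $u$-coordinates live); the paper's argument is that two such modifications differ by a meromorphic $\SO(2,\C)$-valued gauge in the rank-two isotropic piece, and any such gauge is automatically holomorphic. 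You would need to supply an argument of this kind rather than cite the normal form of the Higgs field.
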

\begin{proof} \begin{itemize}[wide=\parindent] \item[i)]
The proof of the first assertion is closely following an argument in Section 4.1/4.2 of \cite{Hi07} using the local form for the Higgs field describe in Lemma \ref{lem:so:local}. Let $x \in X$ and $(U,z)$ a coordinate chart centred at $x$. Consider an orthogonal splitting $E\rest_U=V_0 \oplus V_2 \oplus \dots \oplus V_n$, such that $V_0$ is as in the lemma and $V_i$ for $i \geq 2$ is rank $2$ containing the eigen spaces to eigenvalues $\pm \lambda_i \neq 0$. Let $e_0,e_1,e_2$ be an orthogonal frame for $V_0$, such that $\Phi_0$ has the form described in the Lemma and $e_{2i-1},e_{2i}$ an orthogonal frame of $V_i$ of eigen sections of $\Phi$. Then the induced alternating bilinear form $\alpha:=\omega(\Phi \cdot, \cdot)$ is given by
\[ \ \hspace{1.1cm} \alpha=i z^l \left(e_2 \wedge (e_3 +i e_1) + z(\cdots)\right) + i \lambda_2 (e_3 \wedge e_4)+ \dots+ i \lambda_n (e_{2n-1} \wedge e_{2n} ) .
\] Let us assume that with respect to our frame the volume form is given by $\vol=e_0 \wedge \dots \wedge e_{2n} \in H^0(U,\det(E))$. Then, we can write $\bigwedge^{n} \alpha \in H^0(U,\bigwedge^{2n}E \otimes M^{n})$ as a contraction $i_{v_0} \vol$ with 
\[ v_0 = -i^{n-1} z^{l} \lambda_2 \cdots \lambda_n (e_3 +i e_1)+ z^{l+1}(\cdots) \in H^0(U,E \otimes M^n).
\] So $v_0$ defines a non-vanishing section of $H^0(X,E \otimes M^n(-D))$ that spans the kernel of $\Phi$. Hence, $\Ker:=\Ker(\Phi) \cong M^{-n}(D)$.

Furthermore, using the local form of the previous lemma one computes that for $p \in Z(a_{2n})$ we have $\omega \rest_{\ker}=z^{\ord_p a_{2n}-2D_p}$. Hence (up to the right choice of $s_D$) 
\[ \omega \rest_{\ker} =\frac{a_{2n}}{s_D^2} \in H^0(X,M^{2n}(-2D)).
\]
\item[ii)] $\ker^\perp \subset E$ is a $\Phi$-invariant subbundle of rank $2n$, such that 
\[ E\rest_U \cong \ker \oplus \ker^\perp \rest_U
\] for all open $U \subset X$, such that $U \cap Z(a_{2n}) = \varnothing$. Hence, the inclusions define an exact sequence of coherent sheaves
\[ 0 \rightarrow \O(\Ker \oplus \Ker^\perp) \rightarrow O(E) \rightarrow \mathcal{T} \rightarrow 0
\] with $\mathcal{T}$ a torsion sheaf supported on $Z(a_{2n})$. Now, $\det(\mathcal{T})$ can be computed from the local description in Lemma \ref{lem:so:local}. 
\item[iii)] Stated differently ii) tells us that $E$ is a Hecke modification of $\Ker \oplus \Ker^\perp$ (see \cite{Ba10} Definition 1.1). We need to show that there is a unique Hecke modification doing the job, i. e. a unique Hecke modification, such that 
\[ F= \Ker \oplus \Ker^\perp
\] with its degenerate symmetric bilinear form 
\[ \beta= \omega \rest_{\Ker} \oplus \omega \rest_{\Ker^\perp}
\] is transformed into an $\SO(2n+1,\C)$-bundle $(\hat{F},\hat{\beta})$. At $p \in Z(a_{2n})$ we have an orthogonal decomposition 
\[ \left(\ker^\perp, \Phi \rest_{\ker^\perp} \right)\rest_U=\left( V_2 \oplus V_{2n-2}, \Phi_2 \oplus \Phi_1 \right)
\] by restricting the orthogonal decomposition in Lemma \ref{lem:so:local}. One the one side, $V_2$ is of rank 2 and $\Phi_2(p)$ is nilpotent, on the other, $\Phi_1$ has non-zero eigenvalues and $\omega\rest_{V_1}$ is non-degenerate. Thereby, we are left with showing that we can find a unique Hecke modification twisting
\[ \left( \Ker\rest_{U} \oplus V_2 , \frac{a_{2n}}{s_D^2} \oplus \omega\rest_{V_2} \right)
\] into a $\SO(3,\C)$-bundle.

Using the local description of the Higgs field in Lemma \ref{lem:so:local} one can show that there are local frames $e_0$ of $\Ker_U$ and $e_1,e_2$ of $V_2$, such that the non-degenerate bilinear form at $p$ is given by 
\[  \frac{a_{2n}}{s_D^2} \oplus \omega\rest_{V_2} = \begin{pmatrix} z^{m-2l} & 0 & 0 \\ 0 & z^{m-2l} & 0 \\ 0 &0  & 1 \end{pmatrix},
\] where $m= \ord_p(a_{2n})$ and $l=D_p$. Hence, the Hecke modification can be assumed to take place in $\mathsf{span}\{e_0,e_1\}$. If there were two Hecke modifications,
\begin{center}
\begin{tikzpicture} \matrix (m) [matrix of math nodes,row sep=2em,column sep=3em,minimum width=2em]
  {
    \Ker\rest_{U} \oplus V_2 & \hat{F}_1 \\ 
    & \hat{F}_2 \\
  };
  \path[-stealth]
    (m-1-1) 
           edge node [above] {$s_1$ } (m-1-2)
           edge node [left] {$s_2$ \ } (m-2-2);
\end{tikzpicture}
\end{center}
such that $\hat{F}_1,\hat{F}_2$ are $\SO(3,\C)$-bundles with the induced orthogonal structure, then up to choosing frames $s_1 \circ s_2^{-1}$ reduces to a meromorphic $\SO(2,\C)$-gauge (an element of the $\SO(2,\C)$-loop group). It is not hard to show, that such a gauge is automatically holomorphic. Hence, the resulting $\SO(3,\C)$-bundles $\hat{F}_1,\hat{F}_2$ are isomorphic.
\end{itemize}
\end{proof}

\begin{prop}\label{prop:SO:pushforward} Let $\underline{a}\in B_{2n}(X,M)$ be of $\sl(2)$-type and $b_2 \in H^0(\Sigma/\sigma,\pi_n^*M^2)$ the induced section. The pushforward induces a holomorphic map
\[ \M_{\SO(3,\C)}(\Sigma/\sigma,\pi_n^*M) \supset \H_{\SO(3,\C)}^{-1}(b_2) \rightarrow \H_{\SO(2n+1,\C)}(\underline{a}) \subset \M_{\SO(2n+1,\C)}(X,M).
\]
\end{prop}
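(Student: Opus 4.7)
My approach is to reverse the decomposition of Lemma \ref{lemm:so:kernel_Hecketrafo}: an $\SO(2n+1,\C)$-Higgs bundle on $X$ is uniquely determined by its Higgs divisor together with the rank-$2n$ orthogonal-complement data $(\Ker(\Phi)^\perp, \Phi|_{\Ker(\Phi)^\perp}, \omega|_{\Ker(\Phi)^\perp})$, so I will produce exactly these two pieces of data downstairs from the $\SO(3,\C)$-Higgs bundle upstairs, pushing forward only the rank-$2$ orthogonal complement $\Ker(\Psi)^\perp$ and not the full $F$. The naive rank-$3n$ pushforward $\pi_{n*}(F\otimes\O(R)^{\frac12})$ is of the wrong rank, since the $\SO(2n+1,\C)$-characteristic polynomial factors as $\lambda(\lambda^{2n}+a_2\lambda^{2n-2}+\dots+a_{2n})$ with a single-factor kernel rather than an $n$-factor one.

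Concretely, given $(F,\Psi,\beta)\in\H_{\SO(3,\C)}^{-1}(b_2)$, I would first apply Lemma \ref{lemm:so:kernel_Hecketrafo} on $\Sigma/\sigma$ (with $n$ replaced by $1$ and $M$ by $\pi_n^*M$) to extract the line bundle $\Ker(\Psi)\cong(\pi_n^*M)^{-1}(D_2)$ for an associated Higgs divisor $D_2\in\Div(\Sigma/\sigma)$, together with $(V,\psi,\gamma):=(\Ker(\Psi)^\perp,\Psi|_{\Ker(\Psi)^\perp},\beta|_{\Ker(\Psi)^\perp})$, a rank-$2$ bundle carrying a non-degenerate symmetric form $\gamma$ and a $\gamma$-antisymmetric Higgs field $\psi$. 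I would then apply Proposition \ref{prop:push:pairing} to $(V,\gamma)$ twisted by $\O(R)^{\frac12}$, obtaining $E_{2n}:=\pi_{n*}(V\otimes\O(R)^{\frac12})$ of rank $2n$ with a non-degenerate symmetric pairing $\omega_{2n}$; the pushforward Higgs field $\Phi_{2n}:=\pi_{n*}\psi$ is $\omega_{2n}$-antisymmetric by the same local $\Z_k$-invariant frame computation as in the proof of Proposition \ref{prop:pushforward:sp2nC}. The characteristic polynomial of $\Phi_{2n}$ is the desired $\lambda^{2n}+a_2\lambda^{2n-2}+\dots+a_{2n}$, since locally the eigenvalues of $\psi$ are $\pm i\mu$ with $\mu^2=b_2$ and the transformation of the spectral equation under $\pi_n$ follows the pattern in the proof of Theorem \ref{Theo:isom:Hitchinfibres}. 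Finally, I would identify the Higgs divisor $D\in\Div(X)$ matched to $D_2$ via the local forms of Lemmas \ref{lemm:local_form+Higgs_divisor} and \ref{lem:so:local} at each branch point of $\pi_n$, define $L_0:=M^{-n}(D)$ with degenerate symmetric form $a_{2n}/s_D^2$, and invoke Lemma \ref{lemm:so:kernel_Hecketrafo}(iii) to Hecke-modify $L_0\oplus E_{2n}$ (with Higgs field $0\oplus\Phi_{2n}$) uniquely into an $\SO(2n+1,\C)$-Higgs bundle $(E,\Phi,\omega)$ whose characteristic polynomial is $\lambda(\lambda^{2n}+a_2\lambda^{2n-2}+\dots+a_{2n})$, so $(E,\Phi,\omega)\in\H_{\SO(2n+1,\C)}^{-1}(\underline{a})$.

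The main obstacle I anticipate is the precise matching of $D_2$ on $\Sigma/\sigma$ to $D$ on $X$ through the ramification pattern of $\pi_n$ cataloged in Proposition \ref{prop:sp:descr:sl(2)-type_spectral_curve}; this amounts to a case analysis at the zeros of $a_{2n}$ (even versus odd order, diagonal versus off-diagonal Higgs divisor), because the compatibility of $\omega_{2n}$ with the chosen symmetric form on $L_0$ under the final Hecke modification depends on this local bookkeeping. Holomorphicity of the resulting map in $(F,\Psi,\beta)$ is then automatic, since the kernel decomposition, the pushforward, and the uniqueness clause of Lemma \ref{lemm:so:kernel_Hecketrafo}(iii) are all functorial and therefore holomorphic in families.
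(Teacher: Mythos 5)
Your proposal follows the paper's proof essentially verbatim: push forward only the rank-$2$ orthogonal complement of the kernel, twisted by $\pi_n^*M^{n-1}=\O(R)^{\frac12}$, together with its pairing via Proposition \ref{prop:push:pairing}, and then reconstruct the $\SO(2n+1,\C)$-Higgs bundle by the unique Hecke modification of Lemma \ref{lemm:so:kernel_Hecketrafo}(iii). The only slips are cosmetic: $\gamma=\beta|_{\Ker(\Psi)^\perp}$, and hence the pushed-forward pairing, are non-degenerate only away from $Z(b_2)$ resp.\ $Z(a_{2n})$ (the final Hecke modification is exactly what repairs this), and the paper does devote a few lines to holomorphicity, noting that the reconstruction depends only on the local rank-$2$ subbundle on which the Higgs field is nilpotent and not on the discrete datum $D$, rather than calling it automatic.
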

\begin{proof}
Let $(E_3,\Phi_3,\omega_3) \in \H_{\SO(3,\C)}^{-1}(b_2)$. The pushforward 
\[ \pi_{n*}\left(\ker(\Phi_3)^\perp \otimes \pi_n^*M^{n-1}, \Phi_3\rest_{\ker(\Phi_3)^\perp}, (\del \pi_n^{-1})\omega_3 \rest_{\ker(\Phi_3)^\perp} \right)
\] defines a $M$-twisted $\GL(2n,\C)$-Higgs bundle on $X$ with 
\[ \det\left( \pi_{n*}\left(\ker(\Phi_3)^\perp \otimes \pi_n^*M^{n-1}\right)\right) = M^{-n}(\Nm D).
\] and a symmetric bilinear form $\pi_{n*}\left((\del \pi_n^{-1})\omega_3 \rest_{\ker(\Phi_3)^\perp}\right)$, which is non-degenerate away from $Z(a_{2n})$ by Proposition \ref{prop:push:pairing}. Furthermore, $\pi_{n*} \Phi_3$ is anti-symmetric with respect to this bilinear form. 
Moreover, we have a induced Higgs divisor given by $\Nm(D)$ supported at $Z(a_{2n})$. Now there is a unique way to recover a $\SO(2n+1,\C)$-Higgs bundle $(E,\Phi,\omega)$ by Lemma \ref{lemm:so:kernel_Hecketrafo}. 

This reconstruction seems to depend on the Higgs divisor $D$. However, we saw in the proof of Lemma \ref{lemm:so:kernel_Hecketrafo} that the construction is local and only depends on the rank 2 subbundle of $\pi_{n*}(\ker(\Phi_3)^\perp \otimes \pi_n^*M^{n-1})$, on which the Higgs field has vanishing eigenvalue. So for a trivially covered neighbourhood $U \subset X$ of $x \in Z(a_{2n})$ it recovers $(E_3,\Phi_3) \rest_{V}$, where $V \subset \pi_n^{-1}(U)$ is the unique connected component, such that $\lambda \in H^0(V,\pi_n^*K)$ has a zero. Hence, $(E,\Phi,\omega)$ varies holomorphically with $(E_3,\Phi_3,\omega_3)$. 

Finally, we show that this map preserves stability. If $\Sigma$ is irreducible, there are no $\Phi$-invariant isotropic subbundles of $(E,\Phi,\omega)$. Hence, it is automatically stable. If this is not the case, being of $\sl(2)$-type the corresponding $\Sp(2n,\C)$-spectral curve decomposes into two irreducible components $\Sigma=\Sigma_1 \cup \Sigma_2$. The $\Phi_3$-invariant isotropic subbundles $L_1,L_2 \subset \ker(\Phi_3)^\perp \subset E_3$ are the eigen line bundles corresponding to the irreducible components $\Sigma_1, \Sigma_2$ of $\Sigma$. Their pushforwards $\pi_{n*}L_i$ define $\pi_{n*} \Phi_3$-invariant isotropic subbundles of $E$. These are all $\Phi$-invariant isotropic subbundles of $E$. Now, Equation \ref{equ:stabl:deg} in the proof of Theorem \ref{Theo:isom:Hitchinfibres}. Shows that $(E_3,\Phi_3,\omega_3)$ is stable if and only if $(E,\Phi,\omega)$ is. 
\end{proof}

\begin{prop}\label{prop:SO:pullback} Let $\underline{a}\in B_{2n}(X,M)$ be of $\sl(2)$-type and $b_2 \in H^0(\Sigma/\sigma,\pi_n^*M^2)$ the induced section. The pullback along $\pi_n: \Sigma/\sigma \rightarrow X$ induces a holomorphic map
\[ \H^{-1}_{\SO(2n+1,\C)}(\underline{a}) \rightarrow \H^{-1}_{\SO(3,\C)}(b_2) \subset \M_{\SO(3,\C)}(\Sigma/\sigma,\pi_n^*M) 
\]   
\end{prop}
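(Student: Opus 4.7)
The strategy copies Proposition \ref{prop:pullback:toHit} verbatim, carving out a rank-$3$ generalised eigensheaf on $\Sigma/\sigma$ in place of a rank-$2$ one. Given $(E,\Phi,\omega)\in \H^{-1}_{\SO(2n+1,\C)}(\underline{a})$, the characteristic polynomial of $\Phi$ is $\lambda\cdot(\lambda^{2n}+a_2\lambda^{2n-2}+\dots+a_{2n})$, and by the very definition of $b_2$ its pullback along $\pi_n$ is divisible by $\lambda(\lambda^2+b_2)$. I therefore set
\[
T:=\pi_n^*\Phi\,(\pi_n^*\Phi^2+b_2\,\mathrm{id})\colon \pi_n^*E\longrightarrow \pi_n^*E\otimes \pi_n^*M^{3}, \qquad E_3:=\ker T.
\]
Because $\Sigma/\sigma$ is smooth under the $\sl(2)$-type hypothesis and the factorisation of the characteristic polynomial is clean, the same argument as in the symplectic case produces an exact sequence
\[
0\longrightarrow E_3\longrightarrow \pi_n^*E\xrightarrow{\ T\ } \pi_n^*E\otimes \pi_n^*M^{3}\longrightarrow E_3\otimes \pi_n^*M^{2n+1}\longrightarrow 0
\]
with $E_3$ locally free of rank $3$, and restricting $\pi_n^*\Phi$ gives a $\pi_n^*M$-twisted Higgs field $\Phi_3$ on $E_3$ whose Hitchin class is $b_2$ by construction.

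Next, I transport the orthogonal structure. Antisymmetry of $\Phi$ with respect to $\omega$ renders $\pi_n^*\Phi^2+b_2\,\mathrm{id}$ self-adjoint and hence $T=\pi_n^*\Phi^3+b_2\pi_n^*\Phi$ skew-adjoint with respect to $\pi_n^*\omega$. Dualising the exact sequence above and identifying $E$ with $E^\vee$ via $\omega$ (the argument is identical to that of Proposition \ref{prop:pullback:toHit}) yields an isomorphism $E_3\cong E_3^{\vee}\otimes \pi_n^*M^{2-2n}$. Consequently $\pi_n^*\omega$ restricts to a non-degenerate symmetric bilinear form $\omega_3$ on $E_3\otimes \pi_n^*M^{n-1}$, and $\Phi_3$ is antisymmetric with respect to $\omega_3$. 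Taking determinants in the self-duality isomorphism shows that $\det(E_3\otimes \pi_n^*M^{n-1})$ is $2$-torsion; to land in $\M_{\SO(3,\C)}$ rather than $\M_{\mathsf{O}(3,\C)}$ one must select the trivial branch, which I would do by continuity from the regular locus $B_{2n}^{\mathsf{reg}}$, where $\Sigma$ is smooth, the orthogonal splitting of $\pi_n^*E$ as the rank-$3$ summand plus the sum of eigen-line bundles is manifest, and the condition $\det E\cong \O$ forces the trivial square root.

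Every step is algebraic/holomorphic in $(E,\Phi,\omega)$, so the assignment
\[
(E,\Phi,\omega)\longmapsto (E_3\otimes \pi_n^*M^{n-1},\Phi_3,\omega_3)
\]
is a holomorphic map into $\H^{-1}_{\SO(3,\C)}(b_2)$. The main obstacle is to confirm local freeness of $E_3$ and non-degeneracy of $\omega_3$ over the ramification locus of $\pi_n$ and the singular points of $\Sigma$; both should be verified in the local normal form of Lemma \ref{lem:so:local}, where under the $\sl(2)$-type assumption $(E,\Phi,\omega)$ decomposes orthogonally as a rank-$3$ block with nilpotent Higgs-value plus a regular block, mirroring exactly the local decomposition on $\Sigma/\sigma$ produced by $T$. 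This parallels the branch-point analysis carried out in Propositions \ref{prop:pushforward:sp2nC} and \ref{prop:SO:pushforward}, and in combination with Proposition \ref{prop:SO:pushforward} would yield the biholomorphism $\H^{-1}_{\SO(2n+1,\C)}(\underline{a})\cong \H^{-1}_{\SO(3,\C)}(b_2)$ asserted in Theorem \ref{theo:so:biholo}.
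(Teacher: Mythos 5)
Your proposal follows essentially the same route as the paper's proof: the same operator $\Psi=\pi_n^*\Phi(\pi_n^*\Phi^2+b_2\,\mathrm{id})$, the same four-term exact sequence defining the rank-$3$ generalised eigensheaf $E_3$, and the same dualisation argument via $\omega$ yielding $E_3\cong E_3^\vee\otimes\pi_n^*M^{2-2n}$ and hence the symmetric form $\omega_3$ on $E_3\otimes\pi_n^*M^{n-1}$. Your additional remarks on fixing the trivial square root of $\det$ and on checking local freeness and non-degeneracy at the ramification locus flag details the paper's proof passes over silently, and are sensible extra care rather than a different approach.
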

\begin{proof}
Let $(E,\Phi,\omega)\in \H^{-1}_{\SO(2n+1,\C)}(\underline{a})$. The pullback of the characteristic polynomial to $\Sigma/\sigma$ 
\[ \lambda\left(\lambda^{2n}+ \pi_n^*a_2\lambda^{2n-2} + \dots + \pi_n^*a_{2n}\right)
\] factors through $\lambda(\lambda^2+b_2)$ and hence defines a generalised eigen bundle $E_3$ on $\Sigma/\sigma$ by
\[ 0 \rightarrow E_3 \rightarrow \pi^*_n E \xrightarrow{\Psi} \pi^*_n (E \otimes M^3) \rightarrow E_3 \otimes \pi^*_n M^{2n+1} \rightarrow 0, \] where 
\[ \Psi:=\pi_n^*\Phi\left(\pi_n^*\Phi^2 +b_2 \id_{\pi_n^*E}\right).
\] Here the cokernel of $\Psi$ is identified with $E_3 \otimes \pi^*_n M^{2n+1}$ using the orthogonal form. The dual exact sequence tensored with $\pi_n^*M^3$ results in
\[ 0 \rightarrow E_3^\vee \otimes \pi^*_n M^{2-2n} \rightarrow \pi^*_n E^\vee \xrightarrow{\Psi^\vee} \pi^*_n (E^\vee \otimes M^3) \rightarrow E_3^\vee \otimes \pi_n^*M^3 \rightarrow 0.
\] The orthogonal bilinear form $\omega$ identifies $E$ with $E^\vee$ and from the anti-symmetry of the Higgs field $\Psi^\vee=-\Psi$ under this identification. Hence, $\omega$ induces an isomorphism $E_3 \cong E_3^\vee \otimes \pi^*_n M^{2-2n}$. Finally, $\omega$ restricts to a symmetric, non-degenerate bilinear form $\omega_3$ on $E_3 \otimes \pi^*_n M^{n-1}$ and the induced Higgs field $\Phi_3$ on $E_3$ is anti-symmetric with respect to it. Hence, $(E_3,\Phi_3)$ is a $\pi_n^*M$-twisted $\SO(3,\C)$-Higgs bundle on $\Sigma/\sigma$. It will become clear that this map preserves stability by the proof of the following theorem stating that it is the inverse map to the one defined in Proposition \ref{prop:SO:pushforward}.
\end{proof}

\begin{theo}\label{theo:so:biholo} Let $\underline{a} \in B_{2n}(X,M)$ be of $\sl(2)$-type and let $b_2 \in H^0(\Sigma/\sigma, \pi_n^*M^2)$ the induced section. The holomorphic map 
between the Hitchin fibres
\[ \H^{-1}_{\SO(3,\C)}(b_2) \rightarrow \H^{-1}_{\SO(2n+1,\C)}(\underline{a})
\]  defined in Proposition \ref{prop:SO:pushforward} is a biholomorphism of complex spaces.
\end{theo}

\begin{proof}
We are left with showing that the maps defined in the previous propositions are inverse to each other. 
We start with $(E_3,\Phi_3) \in \H^{-1}_{\SO(3,\C)}(b_2)$. Consider the holomorphic map 
\[ \Ker(\Phi_3)^\perp \otimes \pi_n^*M^{1-n} \rightarrow \Ker(\Phi_3)^\perp \otimes \pi_n^*M^{n-1}
\] tensoring with $s_R= \del \pi_n \in H^0(\Sigma/\sigma, \pi_n^* M^{2n-2})$. This induces an embedding of locally free sheaves
\[ 0 \rightarrow \Ker(\Phi_3)^\perp \otimes \pi_n^*M^{1-n} \rightarrow \pi_n^* \pi_{n*} \left( \Ker(\Phi_3)^\perp \otimes \pi_n^*M^{n-1} \right).
\] By construction its image is $\Ker(\pi_n^*\pi_{n*}\Phi_3^2-b_2 \id)$. Hence, we recover $\Ker(\Phi_3)^\perp$ by the map defined in Proposition \ref{prop:SO:pullback}. This uniquely determines $(E_3,\Phi_3,\omega_3)$ by Lemma \ref{lemm:so:kernel_Hecketrafo} iii).  

For the converse, let $(E,\Phi,\omega) \in \H^{-1}_{\SO(2n+1,\C)}(\underline{a})$. Then 
\[ (E_3, \Phi_3) = \left( \Ker\Psi, \Phi\rest_{\Ker \Psi} \right)
\] decomposes $\pi^*(E,\Phi)$ into rank 3 subbundles. For $U \subset X$, such that 
\[ \pi_n^{-1}(U)= \bigsqcup_{i=1}^n U_i,
\] these are the generalised eigenbundles to the eigenvalues $0, \pm \lambda\rest_{U_i}$. The pushforward of $\ker(\Phi_3)^\perp \otimes \pi_n^*M^{n-1} \subset E_3$ reassembles the eigenbundles to $\pm \lambda\rest_{U_j}$ for all $i$. By Lemma \ref{lemm:so:kernel_Hecketrafo}
this uniquely determines a $\SO(2n+1,\C)$-Higgs bundle. Hence, we recover $(E,\Phi,\omega)$. 
\end{proof}

\begin{rema}[Hitchin's approach to regular fibers]
Another way to attack to problem is trying to generalise Hitchin's approach in \cite{Hi07}. Hitchin describes the regular $\SO(2n+1,\C)$-Hitchin fibres by relating them to the corresponding $\Sp(2n,\C)$-Hitchin fibre on $X$. Let $(V,\Phi,g) \in \H^{-1}_{\SO(2n+1,\C)}(\underline{a})$ with $\underline{a}$ of $\sl(2)$-type. Adopting Hitchin's notation, let $V_0 \subset V$ be the kernel line bundle and $\Phi': V/V_0 \rightarrow V/V_0$ the induced Higgs field. It is easy to see that $\alpha:=g( \Phi' \cdot , \cdot )$ defines a holomorphic anti-symmetric bilinear form on $V/V_0$ that is non-degenerate, where $\Phi$ has distinct eigenvalues. If $\deg(D) \equiv 0 \mod 2n$, where $D=D(V,\Phi)$, we can choose a square root $L^{2n}=K_X^{-n}(D)$ and define a symplectic Higgs bundle by 
\[ (E:= V /V_0 \otimes L, \phi', \alpha).
\] $\bigwedge ^{n} \alpha \in H^0(X,\det(E))$ is generically non-zero and $\det(E)= \O_X$ by Lemma \ref{lemm:so:kernel_Hecketrafo} i). Hence, $\alpha$ is non-degenerate on $E$. For regular Hitchin fibres, $D$ is always zero and therefore this defines a map
\begin{align*} \H^{-1}_{\SO(2n+1,\C)}(\underline{a}) \rightarrow \H^{-1}_{\Sp(2n,\C)}(\underline{a}).
\end{align*} Hitchin uses this map to study the regular $\SO(2n+1,\C)$-fibres as covering spaces of symplectic Hitchin fibres. The singular fibres are stratified by the Higgs divisors $D$. One the open and dense stratum, we have $D=0$ and we could apply the same argument. But for the lower strata $\deg(D)$ mod $2n$ is unconstrained. Hence, this trick does not generalise. 
\end{rema}


\section[Langlands correspondence]{Langlands correspondence for \texorpdfstring{$\sl(2)$}{sl(2)}-type Hitchin fibres}\label{sec:langlands} In this section, we compare the $\sl(2)$-type Hitchin fibres for the Langlands dual groups $\Sp(2n,\C)$ and $\SO(2n+1,\C)$ projection to the same point in the Hitchin base. Concerning the abelian part of the spectral data we will recover torsors over dual abelian varieties. This reproves and generalizes the result for regular fibres in \cite{Hi07}. The non-abelian part of the spectral data will not change under the duality. This is a new phenomena. We will start with the rank 1 case.

For $\rk(\g)=1$, we can compare the Hitchin fibres by using the exceptional isomorphisms $\Sp(2,\C) \cong \SL(2,\C)$ and $\SO(3,\C) \cong \PGL(2,\C)$. The moduli space of $\PGL(2,\C)$-Higgs bundles can be constructed as follows (see \cite{Ha13}). First recall that 
\[ \M_{\GL(1,\C)}(X,M) \cong \Pic(X) \times H^0(X,M)
\] is an abelian group with an action on $\M_{\GL(2,\C)}(X,M)$. Let
\[ (L,\lambda) \in \M_{\GL(1,\C)}(X,M)\quad  \text{and} \quad (E,\Phi) \in \M_{\GL(2,\C)}(X,M),
\] then the proper, holomorphic action is given by
\[  \left( (L,\lambda),(E,\Phi)\right) \mapsto (E \otimes L, \Phi + \lambda \id_E ).
\] We define the $\PGL(2,\C)$-Higgs bundle moduli as the orbifold quotient
\[ \M_{\PGL(2,\C)}(X,M) = \M_{\GL(2,\C)}(X,M) / \M_{\GL(1,\C)}(X,M).
\] Acting with $H^0(X,M)$, we can find a representative for each $\PGL(2,\C)$-Higgs bundles with $\tr(\Phi)=0$. Hence,
\[ \M_{\PGL(2,\C)}(X,M) \cong \H_{\GL(2,\C)}^{-1}(B_{\SL(2,\C)}(X,M)) / \Pic(X), 
\] where we think of $B_{\SL(2,\C)}(X,M) \subset B_{\GL(2,\C)}(X,M)$ by the obvious inclusion. For $N \in \Pic(X)$ define
\[ \M_{\SL(2,\C)}^N(X,M) = \left\{ (E,\Phi) \in M_{\GL(2,\C)}(X,M) \mid \det(E)=N, \tr(\Phi)=0 \right\}. 
\] The action of $\Pic(X)$ identifies $\M_{\SL(2,\C)}^{N_1}(X,M)$ and $\M_{\SL(2,\C)}^{N_2}(X,M)$, whenever $\deg(N_1)=\deg(N_2) \mod 2$. Hence, fixing a line bundle $N \in \Pic(X)$ of degree 1, we have
\begin{align} \M_{\PGL(2,\C)}(X,M)=\left( \M_{\SL(2,\C)}^{\O_X}(X,M) \sqcup \M_{\SL(2,\C)}^{N}(X,M) \right) / \Jac(X)[2], \label{equ:MPGL(n,C)}
\end{align} where $\Jac(X)[2] \cong \Z^{2g}_2$ denotes the group of two-torsion points of $\Jac(X)$. 

The isomorphism to the moduli space of $\SO(3,\C)$-Higgs bundles is defined using the adjoint representation  
\begin{align*} \M_{\PGL(2,\C)}(X,M) \quad  &\rightarrow \qquad \M_{\SO(3,\C)}(X,M) \\
(E,\Phi) \qquad &\mapsto  \left( (E \times_{\Ad} \sl(2,\C)) \otimes \det(E)^{-1}, \ad(\Phi), \omega \right).
\end{align*}
Here, the orthogonal structure $\omega$ is induced by the Killing form on $\sl(2,\C)$. Topologically $\SO(3,\C)$-Higgs bundles on a Riemann surface are classified by the second Stiefel-Whitney class 
\[ \mathsf{sw}_2 \in H^2(X,\Z_2) \cong \Z_2.
\] This is the obstruction to lift a $\SO(3,\C)$-Higgs bundle to a $\mathsf{Spin}(3,\C) \cong \SL(2,\C)$-Higgs bundle. Hence, under the isomorphism $\M_{\SL(2,\C)}^{\O_X}(X,M)/\Jac(X)[2]$ is map\-ped onto the connected component of $\SO(3,\C)$-Higgs bundles with $sw_2=0$ and $\M_{\SL(2,\C)}^{N}(X,M)/\Jac(X)[2]$ onto the connected component with $sw_2=1$. 

The Hitchin map 
\[ \H_{\PGL(2,\C)}:\M_{\PGL(2,\C)}(X,M) \rightarrow H^0(X,M^2)
\] is defined in terms of the decomposition (\ref{equ:MPGL(n,C)}) by the $\SL(2,\C)$-Hitchin map on each connected component.

For $(E,\Phi) \in \M_{\PGL(2,\C)}(X,M)$, there is a well-defined $\SL(2,\C)$-Higgs field $\Phi$ by (\ref{equ:MPGL(n,C)}). In particular, we can define a Higgs divisor $D(E,\Phi)$ as we did in Lemma \ref{lemm:local_form+Higgs_divisor}.

\begin{theo}\label{theo:SO(2n+1)_strat} Let $a_2 \in H^0(X,M^2)$, such that the spectral curve is irreducible and reduced, then there is a stratification 
\[ \H^{-1}_{\PGL(2,\C)}(a_2) = \bigsqcup_{D } \S_D
\] by finitely many locally closed analytic sets $\S_D$ indicated by Higgs divisors $D$ associated to $a_2$. If there is at least on zero of $a_2$ of odd order, each stratum is a holomorphic $(\C^*)^{r} \times \C^{s}$-bundle over
\[  \left( \Prym_{M^{-1}(D)}(\tilde{\pi}) \sqcup \Prym_{NM^{-1}(D)}(\tilde{\pi}) \right) /\Jac(X)[2], 
\] where 
\[ r=n_{\even}-n_{\diag}(D), \quad r+s=2n(g-1)  - \deg(D)-\frac{n_{\odd}}{2}.
\]
If all zeros of $a_{2}$ are of even order, each stratum $\S_D$ is a holomorphic  $(\C^*)^{r} \times \C^{s}$-bundle over 
\[  \left( \Prym_{IM^{-1}(D)}(\tilde{\pi}) \sqcup \Prym_{INM^{-1}(D)}(\tilde{\pi}) \right) /\Jac(X)[2], 
\] with $r,s$ given by above formulae. Here $I$ is the unique non-trivial line bundle on $X$, such that $\tpst I=\O_X$.
A local trivialisation of the fibre bundle $\S_D \subset \H^{-1}_{\PGL(2,\C)}(a_2)$ induces a local trivialisation of the fibre bundle structure of the corresponding stratum $\S_D \subset \H^{-1}_{\SL(2,\C)}(a_2)$ and vice versa.
\end{theo}

\begin{proof}
Fix a $\SL(2,\C)$-representative $(E,\Phi)$ of a Higgs bundle in 
\[ \S_D \subset \H^{-1}_{\PGL(2,\C)}(a_2) \subset \left( \M_{\SL(2,\C)}^{\O_X}(X,M) \sqcup \M_{\SL(2,\C)}^{N}(X,M) \right) / \Jac(X)[2]
\]  By \cite{Ho1} Theorem 5.5, we can associate an eigen line bundle $L$ on the normalised spectral cover $\tilde{\pi}: \tilde{\Sigma} \rightarrow X$ to $(E,\Phi)$. If $\det(E)=\O_X$, it will lie in $\Prym_{M^{-1}(D)}(\tilde{\pi})$ and, if $\det(E)=N$, in $\Prym_{NM^{-1}(D)}(\tilde{\pi})$. After choosing frames $s$ of $L$ at $\tilde{\pi}^{-1}Z(a_2)$ the $\SL(2,\C)$-Higgs bundle $(E,\Phi)$ is uniquely determined by its $u$-coordinate in $(\C^*)^{r} \times \C^{s}$ with $r,s$ as in the Theorem. The action by $\Jac(X)[2]$ lifts to the normalised spectral curve and induces an action
\begin{align*} \Jac(X)[2] \times \Prym_F(\tilde{\pi}) \rightarrow \Prym_F(\tilde{\pi}), \qquad (J,L) \mapsto \tpst J \otimes L
\end{align*} for $F \in \Pic(X)$. For $F=M^{-1}(D)$ and $F=NM^{-1}(D)$, this is exactly the action on the eigen line bundle induced by the action of $\Jac(X)[2]$ on $(E,\Phi)$. 

Recall, that in $\SL(2,\C)$-case for $a_2 \in H^0(X,M^2)$ having only zeros of even order, each stratum is a two-sheeted covering of a fibre bundle over the twisted Prym variety. This was due to the identification of $(E,\Phi)$ and $(E\otimes I,\Phi)$ via pullback. However, $I \in \Jac(X)[2]$ and so 
\[ \tpst: \M_{\PGL(2,\C)}(X,M) \rightarrow \M_{\PGL(2,\C)}(\tilde{\Sigma},\tpst M)
\] is injective.

The non-abelian part of the spectral data decodes the local Hecke parameter at $\tilde{\pi}^{-1}Z(a_2)$ and does not change under the action of $J \in \Jac(X)[2]$ on $(E,\Phi)$. Choosing a collection of frames $j$ of $J$ at $Z(a_2)$, we obtain a frame of $\tilde{\pi}^*J \otimes L$ at $\tilde{\pi}^{-1}Z(a_2)$ by $\tpst j \otimes s$. The $u$-coordinate does not depend on the choice of $j$ by \cite{Ho1} Proposition 5.8. This proves the last assertion.
\end{proof}

\begin{theo}\label{theo:SO(2n+1)_global_fibreing}
Let $a_2 \in H^0(X,M^2)$, such that the spectral curve is locally irreducible, then the $\PGL(2,\C)$-Hitchin fibre over $a_2$ is itself a holomorphic fibre bundle over 
\[ \left( \Prym_{M^{-1}(D)}(\tilde{\pi}) \sqcup \Prym_{NM^{-1}(D)}(\tilde{\pi}) \right) /\Jac(X)[2] 
\] with fibres given by the compact moduli of Hecke parameters. 
\end{theo}
\begin{proof}
This is a direct consequence of the previous theorem and \cite{Ho1} Theorem 7.13.
\end{proof}

\begin{exam}\label{exam:so(2n+1,C)_first_deg} Example \ref{exam:sp(2n,C)_first_deg} carries over to the $\PGL(2,\C)$-case. Let $a_{2n}$ have $k_l$ zeros of order $l$ for $l \in \{2,3,4,5\}$ and at least one zero of odd order. Then up to normalisation $\H_{\PGL(2,\C)}^{-1}(a_2)$ is given by a holomorphic  
\[ (\P^1)^{k_2+k_3} \times (\P(1,1,2))^{k_4+k_5}-
\] bundle over 
\[ \left( \Prym_{M^{-1}(D)}(\tilde{\pi}) \sqcup \Prym_{NM^{-1}(D)}(\tilde{\pi}) \right) /\Jac(X)[2].
\]
\end{exam}

Before we formulate the Langlands duality in $\rk(\g)=1$, let us identify the abelian part of the spectral data for $\PGL(2,\C)$ as an abelian torsor over the dual abelian variety to the Prym variety. 
\begin{prop}[\cite{HaTh03} Lemma 2.3]\label{prop:dual:prym} Let $\pi: Y \rightarrow X$ a $s$-sheeted covering of Riemann surfaces, then 
\[ \Prym(\pi)^\vee \cong \Prym(\pi)/\Jac(X)[s].
\]
\end{prop}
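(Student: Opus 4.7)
The proof hinges on the fundamental adjunction between the pullback and the norm maps associated to $\pi: Y \to X$. First I would record that $\Nm_\pi \circ \pi^* = [s]$ on $\Jac(X)$ (multiplication by $s$) and that $\Nm_\pi$ is surjective when $Y$ is connected, which yields the short exact sequence of abelian varieties
\[
 0 \to \Prym(\pi) \to \Jac(Y) \xrightarrow{\Nm_\pi} \Jac(X) \to 0,
\]
where $\Prym(\pi)$ is taken as the identity component of $\ker(\Nm_\pi)$.

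Next I would dualize this sequence. Each Jacobian is canonically self-dual via its principal $\Theta$-polarization, and the essential classical fact is that under these identifications pullback and norm are Poincaré-adjoint, i.e.\ $(\pi^*)^\vee = \Nm_\pi$ and $(\Nm_\pi)^\vee = \pi^*$. Dualizing the exact sequence above and applying self-duality therefore produces
\[
 0 \to \Jac(X) \xrightarrow{\pi^*} \Jac(Y) \to \Prym(\pi)^\vee \to 0,
\]
so that $\Prym(\pi)^\vee \cong \Jac(Y)/\pi^*\Jac(X)$.

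Finally, I would compare this presentation to $\Prym(\pi)/\pi^*(\Jac(X)[s])$ by composing the inclusion $\Prym(\pi) \hookrightarrow \Jac(Y)$ with the projection $\Jac(Y) \to \Jac(Y)/\pi^*\Jac(X)$. The kernel is $\Prym(\pi) \cap \pi^*\Jac(X)$: if $\pi^*L$ lies in $\Prym(\pi)$ then $\O_X = \Nm_\pi(\pi^*L) = L^s$, so $L \in \Jac(X)[s]$, and conversely every element of $\pi^*(\Jac(X)[s])$ lies in $\Prym(\pi)$ by the same computation. Surjectivity follows from a dimension count: the abelian subvariety $\Prym(\pi) + \pi^*\Jac(X) \subset \Jac(Y)$ has dimension $\dim \Prym(\pi) + \dim \pi^*\Jac(X) - \dim(\Prym(\pi)\cap\pi^*\Jac(X)) = g(Y)$, and is connected, so it equals $\Jac(Y)$. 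Reading the resulting quotient as $\Prym(\pi)/\Jac(X)[s]$, with $\Jac(X)[s]$ acting through $\pi^*$, delivers the stated isomorphism.

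The main obstacle is the bookkeeping around any failure of injectivity of $\pi^*\colon \Jac(X) \to \Jac(Y)$, since its kernel is automatically contained in $\Jac(X)[s]$ and interferes with literally writing the dual sequence as displayed; one must therefore agree from the outset that $\Prym(\pi)/\Jac(X)[s]$ denotes the quotient by the \emph{image} of $\Jac(X)[s]$ under $\pi^*$. A secondary subtlety is possible disconnectedness of $\ker(\Nm_\pi)$, which forces the passage to the identity component in the first step but does not affect the duality statement.
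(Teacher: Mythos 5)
The paper offers no proof of this proposition — it is quoted directly from \cite{HaTh03}, Lemma 2.3 — and the argument there is precisely the dualization of the norm sequence that you carry out (self-duality of the Jacobians, $\Nm_\pi^\vee=\pi^*$, then comparing $\Jac(Y)/\pi^*\Jac(X)$ with $\Prym(\pi)/\pi^*\Jac(X)[s]$), so your reconstruction is correct and matches the source. Your closing caveats are the right ones: the quotient must be read as the quotient by the image of $\Jac(X)[s]$ under $\pi^*$, which is exactly how the paper uses it (the action $(J,L)\mapsto\tilde{\pi}^*J\otimes L$ in Theorem \ref{theo:SO(2n+1)_strat}), and the connectedness of $\ker(\Nm_\pi)$ only becomes an issue for unramified covers, which the paper treats separately via the line bundle $I$.
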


\begin{coro}\label{coro:duality_rank1} Let $a_2 \in H^0(X,M^2)$, such that the spectral curve is irreducible and reduced. The Hitchin fibres $\H^{-1}_{\PGL(2,\C)}(a_2)$ and $\H^{-1}_{\SL(2,\C)}(a_2)$ are related as follows:
\begin{itemize} 
\item[i)]  The abelian part of the spectral data are torsors over dual abelian varieties.
\item[ii)] The complex spaces of Hecke parameters are isomorphic.
\end{itemize}
\end{coro}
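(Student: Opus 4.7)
The plan is to assemble the corollary directly from the stratification theorem (Theorem \ref{theo:SO(2n+1)_strat}) together with the Prym duality of Proposition \ref{prop:dual:prym} and the corresponding $\SL(2,\C)$-stratification from \cite{Ho1}. No new geometric input is needed: the work was already done in constructing the stratifications so that the abelian and non-abelian spectral data are formally parallel on the two sides.

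For part (i), I would compare the abelian parts stratum by stratum. On the $\SL(2,\C)$ side, \cite{Ho1} shows that the abelian part of each stratum $\S_D$ is the torsor $\Prym_{K^{-1}(D)}(\tilde{\pi})$ over the Prym variety $\Prym(\tilde{\pi})$ (respectively $\Prym_{IK^{-1}(D)}(\tilde\pi)$ in the even-order case). On the $\PGL(2,\C)$ side, Theorem \ref{theo:SO(2n+1)_strat} identifies the abelian part of the corresponding stratum with $\bigl(\Prym_{K^{-1}(D)}(\tilde{\pi}) \sqcup \Prym_{NK^{-1}(D)}(\tilde{\pi})\bigr)/\Jac(X)[2]$ (with the analogous $I$-twist in the even-order case). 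Since the second component is a $\Jac(X)[2]$-translate of the first inside $\Pic(\tilde{\Sigma})$, the quotient is a torsor over $\Prym(\tilde{\pi})/\Jac(X)[2]$. Applying Proposition \ref{prop:dual:prym} with $s=2$, this latter group is exactly $\Prym(\tilde\pi)^\vee$, the dual abelian variety. This yields the desired torsor-over-dual-abelian-variety statement.

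For part (ii), the isomorphism of Hecke parameter spaces is encoded in the last sentence of Theorem \ref{theo:SO(2n+1)_strat}: a local trivialisation of $\S_D \subset \H^{-1}_{\PGL(2,\C)}(a_2)$ induces one of $\S_D \subset \H^{-1}_{\SL(2,\C)}(a_2)$ and conversely. This is a direct consequence of the fact that, after choosing a frame $j$ of $J \in \Jac(X)[2]$ at $Z(a_2)$, the $u$-coordinate recording the Hecke parameter at $\tilde\pi^{-1}Z(a_2)$ is independent of $j$ by \cite{Ho1} Proposition 5.8. Hence the fibre $(\C^*)^r \times \C^s$ of $\S_D$ is the same complex space on the two sides, and the compactifications given in Examples \ref{exam:sp(2n,C)_first_deg} and \ref{exam:so(2n+1,C)_first_deg} match up stratum by stratum.

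The one genuinely delicate point is ensuring that the identification of the quotient $\Prym(\tilde\pi)/\Jac(X)[2]$ with $\Prym(\tilde\pi)^\vee$ from Proposition \ref{prop:dual:prym} is compatible with the torsor structure carrying the $\PGL(2,\C)$-spectral data; that is, one should check that the $\Jac(X)[2]$-action induced from acting on $(E,\Phi)$ by two-torsion line bundles coincides, via pullback to $\tilde\Sigma$, with the action defining the quotient in Proposition \ref{prop:dual:prym}. This compatibility was already verified in the course of proving Theorem \ref{theo:SO(2n+1)_strat}, where the $\Jac(X)[2]$-action on the eigen line bundle was spelled out, so the corollary then follows without further calculation.
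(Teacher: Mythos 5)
Your proposal is correct and follows essentially the same route as the paper: part (i) is read off from Theorem \ref{theo:SO(2n+1)_strat} together with Proposition \ref{prop:dual:prym}, and part (ii) is exactly the last assertion of Theorem \ref{theo:SO(2n+1)_strat} about corresponding trivialisations of the bundles of Hecke parameters. One small inaccuracy in your justification of (i): $\Prym_{NK^{-1}(D)}(\tilde{\pi})$ is not a $\Jac(X)[2]$-translate of $\Prym_{K^{-1}(D)}(\tilde{\pi})$, since the action $L \mapsto \tilde{\pi}^*J \otimes L$ satisfies $\Nm(\tilde{\pi}^*J)=J^{\otimes 2}=\O_X$ and therefore preserves each component; consequently the quotient is a disjoint union of two torsors over $\Prym(\tilde{\pi})/\Jac(X)[2] \cong \Prym(\tilde{\pi})^\vee$ rather than a single one, which is consistent with the plural ``torsors'' in the statement and does not affect the conclusion.
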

\begin{proof}
Assertion i) is immediate from the previous theorems and proposition. We showed in Theorem \ref{theo:SO(2n+1)_strat} that a trivialisation of the bundle of Hecke parameters of $\H^{-1}_{\SL(2,\C)}(a_2)$ induces a trivialisation of the bundle of Hecke parameters of $\H^{-1}_{\PGL(2,\C)}(a_2)$. The identity with respect to corresponding trivialisation induces an isomorphism between the complex spaces of Hecke parameters. 
\end{proof}

By Theorem \ref{theo:so:biholo}, these results carry over to higher rank. 

\begin{theo}\label{theo:so(2n+1):fibreing} Let $\underline{a} \in B_{2n}(X,K_X)$ be of $\sl(2)$-type with irreducible and reduced $\Sp(2n,\C)$-spectral curve. Fix $N \in \Pic(\Sigma/\sigma)$ of degree $1$. All the results from the previous section carry over to the $\SO(2n+1,\C)$-case. 

In explicit, there is a stratification 
\[ \H_{\SO(2n+1,\C)}^{-1}(\underline{a}) = \bigsqcup_{D} \S_D
\] by fibre bundles over disjoint unions of abelian torsors  indicated by Higgs divisors as described in Theorem \ref{theo:SO(2n+1)_strat}. If $a_{2n} \in H^0(X,K_X^{2n})$ has at least one zero of odd order, the disjoint union of abelian torsors is given by
\[ \left( \Prym_{\pi^*_nK_X^{-1}(D)}(\tilde{\pi}) \sqcup \Prym_{N\pi^*_nK_X^{-1}(D)}(\tilde{\pi}) \right) /\Jac(X)[2] 
\] If all zeros of $a_{2n}$ are of even order, it is
\[ \left( \Prym_{I\pi^*_nK_X^{-1}(D)}(\tilde{\pi}) \sqcup \Prym_{IN\pi^*_nK_X^{-1}(D)}(\tilde{\pi}) \right) /\Jac(X)[2],
\] where $I \in \Jac(\Sigma/\sigma)$ is the unique non-trivial line bundle, such $\tilde{\pi}_2^*I=\O_{\tilde{\Sigma}}$.

When $a_{2n}$ has only zeros of odd order, we obtain a global fibreing of the $\SO(2n+1,\C)$-Hitchin fibre over this union of abelian torsors as described in Theorem \ref{theo:SO(2n+1)_global_fibreing}.
Replacing the union of abelian torsors by the above, Example \ref{exam:so(2n+1,C)_first_deg} describes the first degenerations of singular $\sl(2)$-type Hitchin fibres for $\SO(2n+1,\C)$ up to normalisation. 
\end{theo}
\begin{proof}
This is immediate from the identification of $\sl(2)$-type Hitchin fibres for $\SO(2n+1,\C)$ with fibres of the $\pi_n^*K_X$-twisted $\SO(3,\C)$-Hitchin system on $\Sigma/\sigma$ in Theorem \ref{theo:so:biholo}.
\end{proof}

\begin{rema} It follows from Theorem \ref{theo:fibrebdle_trivial} and the last assertion in Theorem \ref{theo:SO(2n+1)_strat}, that all these fibre bundles are smoothly trivial. 
\end{rema}

\begin{coro} Let $\underline{a} \in B_{2n}(X,K_X)$ be of $\sl(2)$-type with irreducible and reduced $\Sp(2n,\C)$-spectral curve. Then $\H_{\SO(2n+1,\C)}^{-1}(\underline{a})$ has two connected components. If $a_{2n}\in H^0(X,K_X^{2n})$ has at least one zero of odd order, these two connected components are irreducible. If all zeros of $a_{2n}$ have even order, then each connected component has two irreducible components.
\end{coro}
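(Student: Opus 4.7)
The plan is to transport the fibre through the biholomorphism of Theorem \ref{theo:so:biholo} and count connected components of the appropriate base. Fix a line bundle $N' \in \Pic(\Sigma/\sigma)$ of degree $1$. By the presentation (\ref{equ:MPGL(n,C)}),
\[
\H_{\SO(2n+1,\C)}^{-1}(\underline{a}) \;\cong\; \bigl(\H_{\SL(2,\C)}^{\O}(b_2) \sqcup \H_{\SL(2,\C)}^{N'}(b_2)\bigr)\big/\Jac(\Sigma/\sigma)[2].
\]
Since $L^{\otimes 2}=\O$ for every $L\in\Jac[2]$, the action preserves determinant, so the two summands descend to open and closed pieces of the quotient. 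Each $\H_{\SL(2,\C)}^{\det}(b_2)$ is connected: for $\det=\O$ this is Corollary \ref{coro:irred_comp} via Theorem \ref{Theo:isom:Hitchinfibres}, and for $\det=N'$ the same proof of \cite{Ho1} applies since it is insensitive to the degree of the determinant. Quotients of connected spaces remain connected, so the Hitchin fibre has exactly two connected components.

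For the irreducible components, by Theorem \ref{theo:so(2n+1):fibreing} the open dense stratum $\S_0$ is a $(\C^*)^{r_0}$-bundle over
\[
B_0 \;=\; \bigl(\Prym_A(\tilde\pi_2) \sqcup \Prym_{AN'}(\tilde\pi_2)\bigr)\big/\Jac(\Sigma/\sigma)[2],
\]
with $A=\pi_n^*K^{-1}$ in the odd case and $A=I\,\pi_n^*K^{-1}$ in the even case; every lower stratum lies in its closure by the degeneration analysis of \cite{Ho1}. The irreducible components of the Hitchin fibre are thus in bijection with the connected components of $B_0$. In the odd case $\tilde\pi_2$ is ramified, so each Prym torsor is connected, the $\Jac[2]$-action preserves each $\Nm^{-1}(F)$-piece, and $B_0$ has two connected components; one per connected component of the Hitchin fibre, which is therefore irreducible.

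In the even case $\tilde\pi_2$ is étale, so each $\Nm^{-1}(F)$ has two connected components and the disjoint union has four. Matching with Corollary \ref{coro:irred_comp} via Theorem \ref{Theo:isom:Hitchinfibres} forces the underlying $\SL(2,\C)$-Hitchin fibre on $\Sigma/\sigma$ to have four irreducible components, realised as these four pieces. The plan is then to analyse how $\Jac(\Sigma/\sigma)[2]$ permutes them: the subgroup $\langle I\rangle$, with $\tilde\pi_2^*I=\O_{\tilde\Sigma}$, acts trivially on the eigen line bundle data but non-trivially on the Higgs bundle and encodes precisely the $2{:}1$ covering appearing in Theorem \ref{theo:Sp(2n):strat}, thereby identifying pairs of irreducible components within each determinant class; the complementary action of $\Jac[2]/\langle I\rangle$, tracked through the pullback $\tilde\pi_2^*:\Jac(\Sigma/\sigma)[2]\to\Ker(\Nm)[2]$, is responsible for the remaining identifications. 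Showing that the net effect leaves exactly two irreducible components in each determinant class — giving four in total, distributed as two per connected component — is the decisive step. The main obstacle is this orbit analysis: it would be handled by combining Proposition \ref{prop:dual:prym} (the Hausel--Thaddeus identification $\Prym^\vee\cong\Prym/\Jac[2]$), a Mumford-style description of $\pi_0(\Ker\Nm)$ for étale double covers, and the local Hecke parameter calculus of \cite{Ho1} to separate the action of $\langle I\rangle$ on the $2{:}1$-covering sheets from the translation action on $\pi_0(\Nm^{-1}(F))$.
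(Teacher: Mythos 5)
Your first two steps---the count of connected components and the irreducibility in the odd case---follow essentially the paper's route (reduction to the $\PGL(2,\C)$-fibre on $\Sigma/\sigma$, the two determinant classes modulo $\Jac(\Sigma/\sigma)[2]$, and the connectedness/irreducibility results of \cite{Ho1} Corollary 8.6 and Theorem 8.8), and that part is fine. The problem is the even case, which is where the actual content of the corollary sits, and there you do not give a proof: you explicitly label the analysis of how $\Jac(\Sigma/\sigma)[2]$ permutes the four connected components of the union of the two Prym torsors as ``the decisive step'' and ``the main obstacle'', and only list tools that ``would'' handle it. Moreover, the deferred step is not a formality. For the \'etale double cover $\tilde{\pi}_2$ the subgroup $\tilde{\pi}_2^*\Jac(\Sigma/\sigma)[2]\subseteq\Ker(\Nm_{\tilde{\pi}_2})$ has order $2^{2g(\Sigma/\sigma)-1}$, which strictly exceeds the order $2^{2g(\Sigma/\sigma)-2}$ of the $2$-torsion of the identity component $\Prym(\tilde{\pi}_2)$; hence some $\tilde{\pi}_2^*J$ lies in the non-identity component of $\Ker(\Nm_{\tilde{\pi}_2})$, and translation by it exchanges the two connected components of each torsor $\Nm^{-1}(F)$. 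So the naive orbit count on the eigen-line-bundle side does not by itself return ``two irreducible components per determinant class'', and your proposal never explains what rescues the count on the Higgs-bundle side. (Your intermediate sentence also conflates two different ``four''s: the four irreducible components in Corollary \ref{coro:irred_comp} arise from the two components of a single Prym torsor times the two sheets of the $2{:}1$-covering in Theorem \ref{theo:Sp(2n):strat}, not from the four components of the two determinant classes.)

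The paper closes the even case by a different and much shorter observation: the only source of the $2{:}1$ identification in the $\SL(2,\C)$-strata is that $(E,\Phi)$ and $(E\otimes I,\Phi)$ pull back to the same Higgs bundle on $\tilde{\Sigma}$; for $\PGL(2,\C)$ the pullback $\tilde{\pi}_2^*$ is injective (\cite{Ho1} Proposition 3.12), so the strata of Theorem \ref{theo:so(2n+1):fibreing} are honest fibre bundles over their bases and the two connected components of $\Prym(\tilde{\pi}_2)$ directly produce two irreducible components in each determinant class. To repair your argument you must either import this injectivity statement or actually carry out the orbit computation you postpone; as written, the even case---the only non-routine assertion of the corollary---is unproved.
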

\begin{proof}
For $\PGL(2,\C)$, the Hitchin fibres in
\[ \left( \M_{\SL(2,\C)}^{\O_X}(X,\pi_{n}^*K_X) \sqcup \M_{\SL(2,\C)}^{N}(X,\pi_{n}^*K_X) \right) / \Jac(X)[2] \] have two connected components by \cite{Ho1} Corollary 8.6 and Theorem 8.8. These results also prove that the components are irreducible in the first case. When all zeros of $a_{2n}$ have even order, each connected component has two irreducible components stemming form the two connected components of $\Prym(\tilde{\pi}_2)$. In the difference to the $\SL(2,\C)$-case, the pullback of Higgs bundles along $\tilde{\pi}_2$ is injective for $\PGL(2,\C)$ (cf. Proposition 3.12 \cite{Ho1}). Now, the general result follows from Theorem \ref{theo:so:biholo}. 
\end{proof}

In particular, Corollary \ref{coro:duality_rank1} generalizes verbatim to higher rank:
\begin{coro}\label{coro:duality} Let $\underline{a} \in B_{2n}(X,K_X)$ be of $\sl(2)$-type with  the spectral curve is irreducible and reduced. The Hitchin fibres $\H^{-1}_{\SO(2n+1,\C)}(\underline{a})$ and $\H^{-1}_{\Sp(2n,\C)}(\underline{a})$ are related as follows:
\begin{itemize} 
\item[i)]  The abelian part of the spectral data is a disjoint union of torsors over dual abelian varieties.
\item[ii)] The complex spaces of Hecke parameters are isomorphic.
\end{itemize}
\end{coro}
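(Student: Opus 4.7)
The plan is to reduce the higher rank statement to the rank one case of Corollary \ref{coro:duality_rank1} via the fibre identifications already proved. By Theorem \ref{Theo:isom:Hitchinfibres} the symplectic fibre $\H^{-1}_{\Sp(2n,\C)}(\underline{a})$ is biholomorphic to the $\SL(2,\C)$-Hitchin fibre $\H^{-1}_{\SL(2,\C)}(b_2)$ of the $\pi_n^*K$-twisted Hitchin system on the smooth curve $\Sigma/\sigma$, where $b_2\in H^0(\Sigma/\sigma,\pi_n^*K^2)$ is the induced section. By Theorem \ref{theo:so:biholo} the odd orthogonal fibre $\H^{-1}_{\SO(2n+1,\C)}(\underline{a})$ is biholomorphic to the $\SO(3,\C)$-Hitchin fibre $\H^{-1}_{\SO(3,\C)}(b_2)$ on the same curve $\Sigma/\sigma$ for the same section $b_2$, which under the exceptional isomorphism $\SO(3,\C)\cong\PGL(2,\C)$ becomes $\H^{-1}_{\PGL(2,\C)}(b_2)$. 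Thus both Hitchin fibres in question are simultaneously identified with the pair of Langlands dual rank one Hitchin fibres on $\Sigma/\sigma$.

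Next I would apply Corollary \ref{coro:duality_rank1} verbatim on the Riemann surface $\Sigma/\sigma$ (with twisting line bundle $\pi_n^*K$ and quadratic differential $b_2$). Its irreducibility-and-reducedness hypothesis corresponds to the assumption on $\Sigma$ in the present statement, because the spectral curve of $b_2$ on $\Sigma/\sigma$ is precisely the normalised spectral curve $\tilde{\Sigma}$ together with its covering $\tilde{\pi}_2:\tilde{\Sigma}\to\Sigma/\sigma$, and $\tilde{\Sigma}$ is irreducible exactly when $\Sigma$ is. Part (i) of Corollary \ref{coro:duality_rank1} then tells us that the abelian parts of the spectral data of $\H^{-1}_{\SL(2,\C)}(b_2)$ and $\H^{-1}_{\PGL(2,\C)}(b_2)$ are torsors over dual abelian varieties, and part (ii) that the corresponding parameter spaces of Hecke transformations are isomorphic.

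To conclude assertion (i) of the corollary it remains to match the rank one abelian data with the twisted Prym varieties as listed in Theorem \ref{theo:Sp(2n):strat} and Theorem \ref{theo:so(2n+1):fibreing}: for $\Sp(2n,\C)$ the eigen line bundle on $\tilde{\Sigma}$ lies in $\Prym_{\pi_n^*K^{-1}(D)}(\tilde{\pi}_2)$ (or its twist by $I$), while for $\SO(2n+1,\C)$ we obtain the disjoint union of the two $\Jac(\Sigma/\sigma)[2]$-quotients of the corresponding $\Prym_{\pi_n^*K^{-1}(D)}(\tilde{\pi}_2)$ and $\Prym_{\pi_n^*NK^{-1}(D)}(\tilde{\pi}_2)$. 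Proposition \ref{prop:dual:prym} applied to $\tilde{\pi}_2$ identifies these quotients with the dual abelian variety of the Prym, which is exactly the duality claimed. Assertion (ii) follows because the biholomorphisms of Theorems \ref{Theo:isom:Hitchinfibres} and \ref{theo:so:biholo} are local at the singularities of $\Sigma$ — they identify the local eigen structure on both sides with the local Higgs data on $\Sigma/\sigma$ — so the Hecke parameter spaces (classified stratum-by-stratum by the same Higgs divisor $D$) transport identically to the same rank one problem on $\Sigma/\sigma$, and Corollary \ref{coro:duality_rank1} (ii) finishes the argument.

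The main obstacle I anticipate is purely bookkeeping: keeping track of the twisting line bundles (the $\pi_n^*K^{-1}(D)$ shifts, the factor $N$ distinguishing the two topological types of $\PGL(2,\C)$-bundles, and the line bundle $I$ that appears when all zeros of $a_{2n}$ have even order) in such a way that the $\Jac(\Sigma/\sigma)[2]$-quotient on the orthogonal side lines up with the dualisation of the Prym on the symplectic side. Once Proposition \ref{prop:dual:prym} is applied on $\Sigma/\sigma$ rather than on $X$, this bookkeeping is dictated entirely by the two theorems giving the stratifications, so no new ideas beyond the rank one corollary are needed.
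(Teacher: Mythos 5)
Your proposal is correct and follows essentially the same route as the paper, which states this corollary as an immediate consequence of Corollary \ref{coro:duality_rank1} transported to higher rank via the biholomorphisms of Theorems \ref{Theo:isom:Hitchinfibres} and \ref{theo:so:biholo}, with the abelian parts matched through Theorem \ref{theo:Sp(2n):strat}, Theorem \ref{theo:so(2n+1):fibreing} and Proposition \ref{prop:dual:prym}. The only minor imprecision is that the $\SL(2,\C)$-spectral curve of $b_2$ over $\Sigma/\sigma$ is $\Sigma$ itself (possibly singular) rather than its normalisation $\tilde{\Sigma}$, but this does not affect the matching of the irreducibility hypotheses or the rest of the argument.
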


\section[Decoupled Hitchin equation]{Solution to the decoupled Hitchin equation through semi-abelian spectral data}\label{sec:lim} 
In this last section, we will show how to use semi-abelian spectral data for $\sl(2)$-type Hitchin fibres to produce solutions to the decoupled Hitchin equation. In a series of works of Fredrickson, Mazzeo, Swoboda, Weiss and Witt \cite{MSWW14,MSWW16,Fr18b} and independently Mochizuki \cite{Mo16}, such singular Hermitian metrics were established as limits of sequences of actual solutions to the Hitchin equation under scaling the Higgs field to infinity. We conjecture this to be true for the solutions to the decoupled Hitchin equation that we will construct. In the $\SL(2,\C)$-case, this is a theorem by \cite{Mo16}.

Let $(P,\Psi) \in \M_{G}(X,M)$. A reduction of structure group $h: X \rightarrow P/K_G$ to a maximal compact subgroup $K_G \subset G$ is called solution to the decoupled Hitchin equation, if  the Chern connection is flat and the Higgs field $\Psi$ is normal. By definition, the Higgs field $\Psi$ is normal, if 
\[ 0=[ \Psi \otimes \tau^h(\Psi) ] \in H^0\left(X,(P \times_{\Ad} \g) \otimes M^2\right),
\] where $\tau^h$ denotes the induced Cartan involution on $P \times_{\Ad} \g$. For $M=K_X$ this is equivalent to
\[ F_h=0, \quad 0 =[\Psi \wedge \tau^h(\Psi)] \in H^{(1,1)}(X,P \times_{\Ad} \g).
\] In most cases, there is no smooth solution to this equation. For $\SL(2,\C)$ it is easy to check by a local computations similar to \cite{MSWW14} Section 3.2, that $h$ is singular at all zeros of the determinant of odd order (cf. Corollary \ref{lim:rema:even_zeros_lowest_stratu}). Global solutions to the decoupled Hitchin equation can be constructed through the pushforward of a Hermitian-Einstein metric on the eigen line bundle $L \in \Prym_{\pi_n^*K_X^{-1}(D)}(\tilde{\pi})$. This method was applied for regular Hitchin fibers in \cite{MSWW14,Fr18b}.

\begin{rema} Usually the solutions of the decoupled Hitchin equation are not unique. They can be modified by applying a Hecke modification - a meromorphic gauge - at a singularity of the Hermitian metric. However, in the cases we consider, there are natural choices indicated by the construction and the known approximation results of \cite{Mo16} and \cite{Fr18b}.  
\end{rema}

\subsection{\texorpdfstring{$\Sp(2n,\C)$}{Sp(2n,C)}}

 
Before going to higher rank, we reproduce a result of Mochizuki for the $\SL(2,\C)$-case (\cite{Mo16} Section 4.3.2.) using the description of singular Hitchin fibers by abelian parameters and Hecke parameters developed in \cite{Ho1}. The resulting Hermitian metrics agree by the uniqueness statement \cite{Mo16} Lemma 4.8.

\begin{theo}[cf. \cite{Mo16} Section 4.3.2]\label{lim:theo:sol_dec_hit}
 Let $(E,\Phi) \in \M_{\SL(2,\C)}(X,M)$ with irreducible and reduced spectral curve. Let $a_2=\det(\Phi)$, $D$ its Higgs divisor and for $x \in Z(a_2)$ let $n_x:= \ord_x{a_2}-2D_x \in \N_0$. Then there exists a Hermitian metric $h_{dc}=h_{dc}(E,\Phi)$ on $E\rest_{X\setminus Z(a_2)}$ solving the decoupled Hitchin equation and inducing a non-singular Hermitian metric on $\det(E)$.
For all $x \in Z(a_2)$ there exists a coordinate $(U,z)$ centred at $x$ and a local frame of $E\rest_U$, such that the Higgs field is given by 
\[ \Phi= \begin{pmatrix} 0 & z^{D_x} \\ z^{\ord_x{a_2}-D_x} & 0 \end{pmatrix} \d z
\] and the Hermitian metric for $\ord_x(a_2) \equiv 1 \mod 2$ is given by
\[ h_{dc}=  \begin{pmatrix} g_1 \vert z \vert^{\frac{n_x}2} & g_2 z^{\frac{1-n_x}{2}} \vert z \vert^{\frac{n_x}2} \\ \bar{g_2}  \bar{z}^{\frac{1-n_x}{2}} \vert z \vert^{\frac{n_x}2} & g_1 \vert z \vert^{-\frac{n_x}2} \end{pmatrix},
\] with $g_1$ a real positive smooth function and $g_2$ a complex smooth function, such that $g_1^2-\vert g_2 \vert ^2 \vert z \vert =1$. For $\ord_x(a_2) \equiv 0 \mod 2$ with respect to such frame, the Hermitian metric is given by
\[ h_{dc}=  \begin{pmatrix} g_1 \vert z \vert^{\frac{n_x}2} & g_2 z^{\frac{-n_x}{2}} \vert z \vert^{\frac{n_x}2} \\ \bar{g_2}  \bar{z}^{\frac{-n_x}{2}} \vert z \vert^{\frac{n_x}2} & g_1 \vert z \vert^{-\frac{n_x}2} \end{pmatrix},
\] with $g_1,g_2$ real positive smooth functions, such that $g_1^2- g_2^2 =1$. In both cases, the smooth functions $g_1,g_2 \in \A_U$ are determined through the $u$-coordinate of $(E,\Phi)$ at $x$.

\end{theo}

\begin{proof}
Let $(E,\Phi) \in \H^{-1}_{\SL(2,\C)}(a_2)$ with Higgs divisor $D$. Recall the description of $(E,\Phi)$ via the semi-abelian spectral data developed in \cite{Ho1} Section 5. Let $\lambda \in H^0( \tilde{\Sigma}, \tpst M)$ the section solving the spectral equation and $\Lambda=\div(\lambda)$ its divisor. The abelian part of the spectral data is a line bundle $L \in \Prym_{M^{-1}(D)}(\tilde{\pi})$ defined by 
\[ \O(L)=\ker(\tpst \Phi -\lambda \id_{\O(\tpst E)}).
\] We recover $\tpst E$ as a Hecke transformation of 
\[ (E_L,\Phi_L):=(L \oplus \sigma^*L, \diag(\lambda,-\lambda)).
\] Up to choices of frames of $L$ at $Z(\tpst a_2)$ these Hecke transformations are para\-met\-rized by a so-called $u$-coordinate. These $u$-coordinates are the non-abelian parameters of the spectral data. 

For constructing the solution to the decoupled Hitchin equation let us fix an auxiliary parabolic structure on $L$ by introducing weights $\alpha_p:= \frac12 (\Lambda-\tpst D)_p$ for all $p \in Z(\tpst a_2)$. Then the parabolic degree $\mathrm{pdeg}(L,\alpha)=0$. Hence, there exists a Hermitian metric $h_L$ adapted to the parabolic structure that satisfies the Hermitian-Einstein equation 
\[ F_{h_L}=0
\] unique up to rescaling by a constant (see \cite{Bi96,Si90}). This induces a flat Hermitian metric $h_L+ \sigma^*h_L$ on $E_L$, such that the Higgs field $\Phi_L$ is normal. Applying the Hecke transformation to $(E_L,\Phi_L,h_L+ \sigma^*h_L)$ we obtain a Hermitian metric on $\tpst E \rest_ {X \setminus Z(a_2)}$ solving the decoupled Hitchin equation. This descends to the desired metric $h_{dc}$. 

To show that it induces a non-degenerate Hermitian metric on $\det(E)=O_X$, we compute its local shape at $Z(a_2)$. Let $x \in Z(a_2)$ be a zero of odd order and $p \in \tilde{\Sigma}$ its pre-image. By \cite{Fr18b} Proposition 3.5, we can choose a frame $s$ of $L$ around $p$, such that $h_L=\vert z \vert^{2\alpha_p}$. Such frame is unique up to multiplying with $c \in \mathsf{U}(1)$ and therefore defines a unique $u$-coordinate for $(E,\Phi)$ at $p$ (see \cite{Ho1} Proposition 5.8). We want to change the frame of $L$, such that the Higgs bundle $(E,\Phi)$ corresponds to $u=0$ respective the new frame. This guarantees the desired local shape of $\Phi$. The transformation rule for $u$-coordinates was given in \cite{Ho1} Proposition 5.8. Choosing the frame $s'= \sqrt{\frac{1+u}{1-u}} s$ the $u$-coordinate for $(E,\Phi)$ is $u'=0$. The Hermitian metric $h_L$ with respect to the frame $s'$ is given by 
\[ h_L=f \vert z \vert ^{2\alpha_p} \quad \text{with } f=\left\vert \frac{1-u}{1+u} \right\vert.
\] Applying the Hecke transformation the induced Hermitian metric on $\tpst E$ at $p$ is given by 
\[ \begin{pmatrix} (f+ \sigma^*f) \vert z \vert ^{2\alpha_p} & (f- \sigma^*f) \left(\frac{\vert z \vert}{z}\right)^{2\alpha_p} \\ (f- \sigma^*f) \left(\frac{\vert z \vert}{\bar{z}}\right)^{2\alpha_p} & (f+ \sigma^*f)\vert z \vert ^{-2\alpha_p}
\end{pmatrix}.
\] There exists $g_1,g_2 \in \A_U$, such that 
\[ \tpst g_1= f+ \sigma^*f \quad \text{and} \quad \tpst g_2= z^{-1}(f- \sigma^*f)
\] Hence, we obtain the desired local form of $h_{dc}$ at $x \in Z(a_2)$.

Using the description of the Hecke parameters at even zeros in terms of $u$-coordinates \cite{Ho1} Proposition 8.1, one can adapt this argument to the zeros of $a_2$ of even order. 

In the case of irreducible, locally reducible spectral curve $\Sigma$, the proof works in the same way as
\[ \Prym_{ IM^{-1}(D)}(\tilde{\pi}_2) \subset \{ L \in \Pic(\tilde{\Sigma} \mid L \otimes \sigma^*L= M^{-1}(D) \},
\] where $I$ is the unique non-trivial line bundle on $X$, such that $\tpst I= \O_{\tilde{\Sigma}}$.  
\end{proof}

\begin{rema} For the regular fibres of $\M_{\SL(2,\C)}(X,K_X)$, this resembles the construction of limiting metrics in \cite{Fr18b}. In difference to Fredrickson, we work with positive weights instead of negatives. This is due to the fact that Fredrickson's construction uses the line bundle $L'$ with the property $\pi_*L'=E$ to reconstruct the Higgs bundle. In terms of $L\in \Prym_{\pi_n^*K_X^{-1}}(\tilde{\pi}_2)$ it is given by $L'=:L \otimes \pi^*K_X$. To every solution $h_L$ of the Hermitian-Einstein equation on $(L,\alpha)$, as defined in the previous theorem, one obtains a solution of the Hermitian-Einstein equation on $(L',-\alpha)$ in a canonical way by $h':=h_L \vert \lambda \vert ^{-2}$. 
\end{rema}

\begin{rema}\label{rem:fixdetcase} Similar to \cite{Fr18b} Proposition 3.3, one can obtain solutions to the decoupled Hitchin equation in the fixed determinant case. One first fixes a Hermitian-Einstein metric $h_{\det(E)}$ on $\det(E)$. In this case, the eigen line bundle $L$ will be an element of 
\[ \{ L \in \Pic( \tilde{\Sigma}) \mid L \otimes \sigma^*L = \tilde{\pi}^*\det(E)M^{-1}(D) \}
\] and one can choose the Hermitian-Einstein metric $h_L$, such that 
\[ h_L \otimes \sigma^* h_L = \tpst h_{\det(E)} \left\vert \frac{\tpst a_2}{\tpst s_{D}^2} \right\vert,
\] where $s_D \in H^0(X,\O_X(D))$ is a canonical section. Then the induced solution to the decoupled Hitchin equation satisfies $\det(h_{dc})=h_{\det}$. 
\end{rema}

\begin{coro}\label{lim:rema:even_zeros_lowest_stratu} Let $(E,\Phi) \in \M_{\SL(2,\C)}(X,K_X)$, such that $\det(\Phi) \in H^0(X,K_X^2)$ has no global square root and $\Phi$ is everywhere locally diagonalizable. Then the Hermitian metric $h_{dc}$ defined in Theorem \ref{lim:theo:sol_dec_hit} is a smooth solution to the Hitchin equation on $(E,\Phi)$. 
\end{coro}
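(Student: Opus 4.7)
The plan is to show that the two hypotheses force $(E,\Phi)$ into the closed stratum where $n_x := \ord_x(a_2)-2D_x$ vanishes at every $x\in Z(a_2)$, and that in this stratum the local formulas of Theorem \ref{lim:theo:sol_dec_hit} patch together to a smooth solution of the full Hitchin equation.

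First I would unpack the hypotheses. Since $\tr\Phi=0$, the eigenvalues of $\Phi$ are $\pm\lambda$ with $\lambda^{2}=-a_{2}$. At a zero $x\in Z(a_{2})$ the local normal form of Lemma \ref{lemm:local_form+Higgs_divisor} gives, in suitable coordinates and frame,
\[
\Phi = \begin{pmatrix} 0 & z^{l_{x}}\\ z^{m_{x}-l_{x}} & 0\end{pmatrix}dz,
\qquad m_{x}=\ord_{x}(a_{2}),\quad 0\le l_{x}\le m_{x}/2.
\]
A direct eigenvector computation shows that the eigenvectors $(1,\pm z^{(m_x/2)-l_x})^{T}$ span a holomorphic frame diagonalising $\Phi$ as a germ if and only if $l_{x}=m_{x}/2$ (otherwise the change-of-basis matrix has determinant $\propto z^{(m_{x}/2)-l_{x}}$ which vanishes at $z=0$). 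In particular, local diagonalisability everywhere forces $m_{x}$ to be even for every $x\in Z(a_{2})$ and the Higgs divisor to take its maximal value $D_{x}=m_{x}/2$, so $n_{x}=0$. The condition that $a_{2}$ has no global square root is, in turn, equivalent to the spectral curve $\Sigma$ being irreducible (though locally reducible at every zero of $a_2$), so we sit in the $\Prym_{I\pi_{n}^{*}K^{-1}(D)}(\tilde{\pi}_{2})$ branch of Theorem \ref{lim:theo:sol_dec_hit}.

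Second, I would substitute $n_{x}=0$ into the local formulas of Theorem \ref{lim:theo:sol_dec_hit}. In the frame above, $\Phi= z^{m_x/2}\bigl(\begin{smallmatrix}0&1\\1&0\end{smallmatrix}\bigr)\,dz$ is smooth and holomorphically diagonalisable, while the metric reads
\[
h_{dc} = \begin{pmatrix} g_{1} & g_{2}\\ \bar g_{2} & g_{1}\end{pmatrix},
\qquad g_{1}^{2}-g_{2}^{2}=1,\quad g_{1},g_{2}\in\A_{U}.
\]
This is manifestly a smooth positive-definite hermitian form across $z=0$; performing the argument at each $x\in Z(a_{2})$ shows that $h_{dc}$, a priori only defined on $X\setminus Z(a_{2})$, extends to a smooth hermitian metric on the whole of $E$.

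Finally, by Theorem \ref{lim:theo:sol_dec_hit} the extended $h_{dc}$ satisfies $F_{h_{dc}}=0$ and $[\Phi\wedge\Phi^{*h_{dc}}]=0$ on $X\setminus Z(a_{2})$. Both are smooth $(1,1)$-forms on all of $X$ vanishing on an open dense set, hence vanishing identically; adding them yields the full Hitchin equation $F_{h_{dc}}+[\Phi\wedge\Phi^{*h_{dc}}]=0$, so $h_{dc}$ is a smooth Hitchin metric on $(E,\Phi)$. The only real obstacle is the first step, namely the linear-algebra verification that holomorphic local diagonalisability pins down the Higgs divisor to $D_{x}=m_{x}/2$; once this is in place, the rest of the proof is reading off the $n_{x}=0$ specialisation of Theorem \ref{lim:theo:sol_dec_hit}.
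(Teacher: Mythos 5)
Your proposal is correct and follows exactly the route the paper intends (the corollary is left without an explicit proof, but the surrounding text points to precisely this local computation): local holomorphic diagonalisability forces every zero of $\det(\Phi)$ to have even order with $D_x=\tfrac12\ord_x(a_2)$, hence $n_x=0$, and the absence of a global square root keeps the spectral curve irreducible so that Theorem \ref{lim:theo:sol_dec_hit} applies; specialising its even-order formula to $n_x=0$ gives a smooth positive-definite extension of $h_{dc}$ across $Z(a_2)$, and the two decoupled equations, holding on a dense open set for smooth data, hold everywhere and sum to the Hitchin equation.
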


\begin{rema}\label{rema:locally_diag} Let $(E,\Phi)$ as in the previous corollary. $(E,\Phi)$ is stable by the irreducibility of the spectral curve. Hence, the rescaled Hitchin equation 
\[ F_h+t^2[\Phi\wedge\Phi^*h]=0, \quad t \in \C^*,
\] decouples and the solutions is independently of $t$ given by the Hermitian metric $h_{dc}$. Hence, this Hermitian metric is the limit of a constant sequence of solutions to the Hitchin equation along a ray to the ends of the moduli space. 
\end{rema}

\begin{proof}[Proof of Corollary \ref{lim:rema:even_zeros_lowest_stratu}]
We have $\tpst D_p= \frac12 \ord_p(\tpst a_2)= \ord_p(\lambda)$ for all $p \in Z(\tpst a_2)$. Hence all weights $\alpha_p$ of the auxiliary parabolic structure are zero. In particular, the Hermitian-Einstein metric $h_L$ is smooth and so is $h_L + \sigma^*h_L$. This Hermitian metric descends to $h_{dc}$. By assumption, $(E,\Phi)$ is in the lowest-dimensional stratum, i.e.\ there are no Hecke parameters for this stratum. Hence, the descend does not include a Hecke transformation and the smoothness is preserved.
\end{proof}

\begin{theo}[\cite{Mo16} Corollary 5.4]\label{theo:approx:Sl(2)} Let $(E,\Phi) \in \M_{\SL(2,\C)}(X,K_X)$ with irreducible and reduced spectral curve, then the solution to the decoupled Hitchin equation $h_{dc}$ is a limiting metric. In explicit, let $h_t$ be the solution to the rescaled Hitchin equation
\[ F_{h_t}+t^2[\Phi\wedge\Phi^{*{h_t}}]=0, \quad t \in \R_+,
\] then $h_t$ converges to $h_\infty$ in $C^\infty$ on any compact subset of $X\setminus Z(\det(\Phi))$ for $t \rightarrow \infty$. 
\end{theo}
\begin{proof}
For $\SL(2,\C)$-Hitchin fibres with irreducible and reduced spectral curve the auxiliary parabolic structure is uniquely determined by the condition that the singular Hermitian metric $h_\infty$ induces a non-singular Hermitian metric on $\det(E)$ (see \cite{Mo16} Lemma 4.8). Hence, $h_\infty$ coincides with the limiting Hermitian metric constructed by Mochizuki and the approximation result follows from \cite{Mo16} Corollary 5.4.
\end{proof}

Applying the biholomorphism of Theorem \ref{Theo:isom:Hitchinfibres}, we can use Theorem \ref{lim:theo:sol_dec_hit} to construct solutions to the decoupled Hitchin equation for $\Sp(2n,\C)$-Higgs bundles of $\sl(2)$-type. 

\begin{theo}\label{theo:limit:sl2type}
Let $(E,\Phi,\omega) \in \M_{\Sp(2n,\C)}(X,K_X)$ with irreducible and reduced spectral curve of $\sl(2)$-type. Let $(E_2,\Phi_2)\in \M_{\SL(2,\C)}(\Sigma/\sigma,\pi^*_n K_X)$ the corresponding $\SL(2,\C)$-Higgs bundle under the biholomorphism of Theorem \ref{Theo:isom:Hitchinfibres}. The solution to the decoupled Hitchin equation on $(E_2,\Phi_2) \in \M_{\SL(2,\C)}(\Sigma/\sigma,\pi^*_n K_X)$ induces a Hermitian metric $h_{dc}$ on $(E,\Phi,\omega)\rest_{X\setminus Z(\disc(E,\Phi,\omega))}$ solving the decoupled Hitchin equation. 
\end{theo}
\begin{proof}
Let $h_2$ be the solution to the decoupled Hitchin equation on $(E_2,\Phi_2)$ defined in Theorem \ref{lim:theo:sol_dec_hit}. Then $h':=h_2 \vert \del \pi_n \vert^{-1}$ defines a Hermitian metric on $E_2 \otimes \pi_n^*K_X^{n-1}$ singular on $Z(\det(\Phi_2)) \cup \supp R$, where $R=\div(\pi_n)$ is the ramification divisor. Recall from Theorem \ref{Theo:isom:Hitchinfibres}, that $\pi_{n*}(E_2 \otimes \pi_n^*K_X^{n-1})=E$. Hence, $\pi_{n*}h'$ defines a flat Hermitian metric on $E$ singular on 
\[ Z(a_{2n}) \cup \pi_n(\supp R)=Z(\disc(E,\Phi,\omega))
\] compatible with the symplectic form, such that $[ \pi_{n*}\Phi \wedge \pi_{n*}\Phi^{*_{\pi_{n*}h'}} ]=0$.

\end{proof}


The local building blocks for these solutions to the decoupled Hitchin equation at its singularities were already considered before. Non-zero eigenvalues of the Higgs field of higher multiplicity correspond to smooth ramification points of $\pi: \Sigma \rightarrow X$. Here the $\Sp(2n,\C)$-Higgs bundle locally looks like a Higgs bundle in a regular $\SL(2n,\C)$-Hitchin fibre. Hence, the local approximation problem is covered by \cite{Fr18b} Section 4.1. The singular points of $\Sigma$ lie on the zero section of $K_X$ and are locally given by an equation of the form 
\[ \lambda^2-z^k=0.
\] These are exactly the singularities of $\SL(2,\C)$-spectral curves. In this case, the local approximation result was proven in \cite{Mo16} Section 3. This leads to the following conjecture.

\begin{conj}\label{conj:limit:config} Let $(E,\Phi,\omega) \in \M_{\Sp(2n,\C)}(X,K_X)$ with irreducible spectral curve of $\sl(2)$-type. Then the solution $h_{dc}(E,\Phi,\omega)$ to the decoupled Hitchin equation is a limiting metric, i. e. let $h_t$ be the solution to the rescaled Hitchin equation
\[F_{h_t}+t^2[\Phi\wedge\Phi^{*{h_t}}]=0, \quad t \in \R_+,
\] then $h_t$ converges to $h_\infty$ in $C^\infty$ on any compact subset of $X\setminus Z(\disc_\sp(E,\Phi,\omega))$ for $t \rightarrow \infty$. 
\end{conj}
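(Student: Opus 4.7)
The plan is to reduce the convergence assertion on the $\Sp(2n,\C)$-side to Mochizuki's Theorem \ref{theo:approx:Sl(2)} via the biholomorphism of Theorem \ref{Theo:isom:Hitchinfibres} and the pushforward construction used in Theorem \ref{theo:limit:sl2type}. First I would verify that the rescaling is compatible with the biholomorphism: rescaling $\Phi \mapsto t\Phi$ sends $a_{2i} \mapsto t^{2i}a_{2i}$ and $b_2 \mapsto t^{2}b_2$, so the reduced spectral curve $\Sigma/\sigma$ is, as an abstract Riemann surface, unchanged, and the generalised eigen sheaf construction in Proposition \ref{prop:pullback:toHit} identifies $(E,t\Phi,\omega)$ with the rescaled $\SL(2,\C)$-Higgs bundle $(E_2,t\Phi_2)$ on $\Sigma/\sigma$.

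Next I would show that solutions to the rescaled Hitchin equation correspond under pushforward. Let $h_t^{(2)}$ denote the unique unitary-gauge solution to
\[
 F_{h_t^{(2)}} + t^{2}\,[\Phi_2 \wedge \Phi_2^{*h_t^{(2)}}] = 0
\]
on the stable bundle $(E_2,t\Phi_2)$ over $\Sigma/\sigma$, which exists by Hitchin--Simpson since $(E_2,\Phi_2)$ is stable (the $\Sp(2n,\C)$-spectral curve, and hence $\Sigma/\sigma$'s spectral equation $\lambda^2+b_2=0$, is irreducible). Following the proof of Theorem \ref{theo:limit:sl2type}, the pushforward $h_t := \pi_{n*}\bigl(h_t^{(2)}|\partial\pi_n|^{-1}\bigr)$ defines a Hermitian metric on $E$ compatible with $\omega$. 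Away from $\pi_n(\mathrm{supp}\,R)$ the cover $\pi_n$ is unramified, so the Hitchin equation for $(E,t\Phi,\omega)$ holds sheet-by-sheet. At ramification points of $\pi_n$ lying over smooth sheet-meeting points of $\pi$, the local normal form is of regular $\SL(2n,\C)$-type and the equation extends by \cite{Fr18b} Section 4.1. Thus $h_t$ solves the rescaled Hitchin equation on $(E,t\Phi,\omega)$, and by uniqueness it is the canonical solution.

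With this correspondence established, Theorem \ref{theo:approx:Sl(2)} on $\Sigma/\sigma$ yields $h_t^{(2)} \to h_\infty^{(2)}$ in $C^\infty$ on compact subsets of $\Sigma/\sigma \setminus Z(\det\Phi_2)$, where $h_\infty^{(2)} = h_{dc}(E_2,\Phi_2)$. Since $\pi_n^{-1}$ of any compact subset of $X \setminus Z(\mathrm{disc}_\sp(E,\Phi,\omega)) = X \setminus (Z(a_{2n})\cup\pi_n(\mathrm{supp}\,R))$ is a compact subset of $\Sigma/\sigma \setminus Z(\det\Phi_2)$, and the pushforward and multiplication by $|\partial\pi_n|^{-1}$ are continuous in the $C^\infty_{\mathrm{loc}}$-topology on this locus, one concludes $h_t \to h_\infty := \pi_{n*}(h_\infty^{(2)}|\partial\pi_n|^{-1}) = h_{dc}(E,\Phi,\omega)$, which is precisely the conjectured convergence.

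The main obstacle is the rigorous justification of Step~2, namely that the pushforward of the Hitchin-equation solution is the Hitchin-equation solution on $X$. The delicate point is the behaviour of the Chern curvature at ramification points of $\pi_n$: while the decoupled equation (flatness and normality) is local and descends cleanly through a Hecke-modified $\Z_k$-invariant bundle, the coupled equation mixes the curvature term with $[\Phi\wedge\Phi^{*h}]$ in a way that is sensitive to the gluing. If this obstacle proves stubborn, a more direct route (bypassing the biholomorphism) would be to run the gluing-and-bootstrap scheme of \cite{Mo16,Fr18b} directly on $X$: combine Mochizuki's local models at $\sl(2)$-type singularities, where $\Sigma$ is locally $\{\lambda^2=z^k\}$, with Fredrickson's local models at smooth higher-order ramification points of $\pi$, and perturb the resulting approximate solutions via a fixed-point argument in a weighted Sobolev or Hölder space to a genuine solution converging to $h_\infty$.
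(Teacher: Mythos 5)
The statement you are attempting is stated in the paper as Conjecture \ref{conj:limit:config}: the author does not prove it, and offers only the observation that the local models at the singularities of $h_{dc}$ are of the types analysed by Mochizuki and Fredrickson. Your proposal does not close the gap, and the fatal point is your Step 2. The pushforward $\pi_{n*}\bigl(h_t^{(2)}\,\vert\partial\pi_n\vert^{-1}\bigr)$ is \emph{singular} along the branch locus $\pi_n(\supp R)$ — the paper says so explicitly in the proof of Theorem \ref{theo:limit:sl2type} — whereas the genuine solution $h_t$ of the rescaled Hitchin equation on the stable bundle $(E,t\Phi,\omega)$ is smooth on all of $X$ for every finite $t$ by Hitchin--Simpson theory. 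The two metrics therefore cannot coincide, and ``by uniqueness it is the canonical solution'' fails: uniqueness holds among smooth metrics, and a singular metric satisfying the equation off a divisor is not automatically the canonical one. The appeal to \cite{Fr18b} Section 4.1 does not repair this: that reference builds local \emph{approximate} (fiducial) solutions near branch points and then glues and perturbs precisely because the pushforward from the spectral curve is not an exact solution there; if it were, the analytic content of \cite{MSWW16,Fr18b,Mo16} would be vacuous. This is exactly why the decoupled (limiting) equation descends cleanly through the covering while the coupled equation at finite $t$ does not.

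There is a second gap in Step 3: the paper's Theorem \ref{theo:approx:Sl(2)} is stated for $K$-twisted $\SL(2,\C)$-Higgs bundles on a surface, while $(E_2,\Phi_2)$ lies in $\M_{\SL(2,\C)}(\Sigma/\sigma,\pi_n^*K)$ with $\pi_n^*K\neq K_{\Sigma/\sigma}$, so you would need a twisted version of Mochizuki's convergence theorem, which you do not supply. Your fallback — running the gluing-and-bootstrap scheme of Mochizuki and Fredrickson directly on $X$, combining the local models at the $\{\lambda^2=z^k\}$ singularities with those at the smooth higher-order ramification points — is indeed the route the paper envisions, but as written it is a restatement of the open problem rather than a proof: the substance lies in constructing the approximate solutions, establishing the weighted a priori estimates, and closing the fixed-point argument, none of which is carried out.
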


\subsection{\texorpdfstring{$\SO(2n+1,\C)$}{SO(2n+1,C)}}

\begin{theo}\label{theo:SO(3)_lim_config} Let $(E,\Phi,\omega) \in \M_{\SO(3,\C)}(X,K_X)$ and $a_2=\det(\Phi)$, such that the associated $\SL(2,\C)$-spectral curve is irreducible and reduced. Then there exists a metric on $(E,\Phi,\omega)\rest_{X\setminus Z(a_2)}$ solving the decoupled Hitchin equation.
\end{theo}
\begin{proof}
The adjoint representation $\Ad: \GL(2,\C) \rightarrow \SO(3,\C)$ induces a commutative diagram 
\[ \begin{tikzcd}
0 \ar[r] & \mathsf{U}(1) \ar[r]\ar[d]& \C^* \ar[r]\ar[d]& \R^+ \ar[r]\ar[d]& 0\\
0 \ar[r] & \mathsf{U}(2) \ar[r]\ar[d]& \GL(2,\C) \ar[r]\ar[d,"\Ad"]& \GL(2,\C)/\mathsf{U}(2) \ar[r]\ar[d]& 0 \\
0 \ar[r] & \SO(3) \ar[r]&  \SO(3,\C) \ar[r]& \SO(3,\C)/\SO(3) \ar[r]& 0
\end{tikzcd}
\]
A metric on the $\SO(3,\C)$-Higgs bundle $(E,\Phi,\omega)$ is a reduction of structure group to $\SO(3)$. Denoting by $P$ the $\SO(3,\C)$-frame bundle associated to $E$, it corresponds to a section of $P \times_{\SO(3,\C)} \SO(3,\C)/\SO(3)$. Let $(E',\Phi') \in \Hit^{-1}_{\GL(n,\C)}(0,a_2)$, such that its image under the map of Higgs bundle moduli spaces 
\[ \M_{\GL(n,\C)}(X,K_X) \rightarrow \M_{\SO(3,\C)}(X,K_X)
\] induced by the adjoint representation is $(E,\Phi,\omega)$. By the above diagram any Hermitian metric on $(E',\Phi')$ induces a metric on $(E,\Phi,\omega)$. Let $h_{\det}$ denote the Hermitian-Yang-Mills metric on $\det(E')$. By Remark \ref{rem:fixdetcase}, there exists a solution to the decoupled Hitchin equation $h_{dc}$ on $(E',\Phi')$, such that $\det(h_{dc})=h_{\det}$ unique up to scaling. This induces a solution to the decoupled Hitchin equation on $(E,\Phi)$ by the above diagram. Furthermore, if we choose another representative $(E'\otimes L,\Phi')$ with $L \in \Pic(X)$. Then $h_{dc}(E'\otimes L,\Phi')=h_{dc}(E',\Phi')h_L$, where $h_L$ is the Hermitian-Einstein metric on $L$. We see from the commutative diagram that the resulting metric on the $\SO(3,\C)$-bundle $E$ does not depend on this choice. 
\end{proof}

\begin{exam}\label{exam:lim_config_so(3)_sing}
Fix a Higgs divisor $D$ associated to $a_2 \in H^0(X,K_X^2)$. Let 
\[ (E,\Phi,\omega) \in \S_D \subset \Hit_{\SO(3,\C)}^{-1}(a_2)
\] be the Higgs bundle corresponding to the $u$-coordinate $0$ respective the frames fixed in the proof of Theorem \ref{lim:theo:sol_dec_hit}. For $x \in Z(a_2)$ there exists a coordinate $(U,z)$ centred at $x$ and a local frame of $E\rest_U$, such that the Higgs field is given by
\[ \Phi= \begin{pmatrix} 0 & i \sqrt{2} z^{D_x} & 0 \\ -i \sqrt{2}z^{\ord_x(a_2)-D_x} & 0 & -i \sqrt{2}z^{D_x} \\ 0 & i \sqrt{2}z^{\ord_x(a_2)-D_x} & 0 \end{pmatrix} \d z,
\] the orthogonal structure by
\[ \omega=\begin{pmatrix} && 1 \\ & 1 & \\ 1 && \end{pmatrix}
\] and the solution to the decoupled Hitchin equation is given by
\[ h_{dc}= \begin{pmatrix} \vert z \vert^{\ord_x(a_2)-D_x} & 0 & 0 \\ 0 & 1 & 0 \\ 0 & 0 & \vert z \vert ^{D_x-\ord_x(a_2)}
\end{pmatrix} .
\] 
\end{exam}

Applying Theorem \ref{theo:so:biholo}, we can use this result to obtain solutions to the decoupled Hitchin equation for $\SO(2n+1,\C)$-Higgs bundles of $\sl(2)$-type.

\begin{theo}\label{theo:hdc_SO(2n+1)} Let $(E,\Phi,\omega) \in \M_{\SO(2n+1,\C)}(X,K_X)$ be of $\sl(2)$-type with irreducible and reduced spectral curve. Then the pushforward along $\pi_{n*}$ defines a solution to the decoupled Hitchin equation $h_{dc}$. 
\end{theo}
\begin{proof}
This proof is similar to the proof of Theorem \ref{theo:limit:sl2type}.  Let $(E_3,\Phi_3,\omega_3)$ be the $\pi_n^*K_X$-twisted $\SO(3,\C)$-Higgs bundle on $\Sigma/\sigma$ corresponding to $(E,\Phi,\omega)$ under the isomorphism of Theorem \ref{theo:so:biholo}. Recall, that we recover $(E,\Phi,\omega)$ by a unique Hecke modification of 
\[ (\hat{E},\hat{\Phi}):=\left( \ker(\Phi) \oplus \pi_{n*} \left(\Ker (\Phi_3)^{\perp} \otimes \pi^*_n K_X^{n-1}\right), 0 \oplus \Phi \rest_{\Ker(\Phi_3)^{\perp}} \right),
\] where the perpendicular is taken with respect to the orthogonal structure $\omega_3$. In Theorem \ref{theo:SO(3)_lim_config}, we constructed a solution to the decoupled Hitchin equation $h_3$ on $(E_3,\Phi_3,\omega_3)$. $h_3$ induces a singular Hermitian metric on $\Ker (\Phi_3)^\perp$, which descends to a Hermitian metric $\pi_{n*}(h_3 \vert \partial \pi_n \vert^{-1})$ on $\pi_{n*} (\Ker (\Phi_3)^{\perp} \otimes \pi^*_n K_X^{n-1})$ singular at $Z(a_{2n}) \cup \supp B$. Here $B$ denotes the branch divisor of $\pi_n: \Sigma/\sigma \rightarrow X$. There is a canonical singular flat metric on $\Ker(\Phi)= K_X^{-n}(D)$ given by $\vert \frac{a_{2n}}{s_D} \vert$ singular at $Z(a_{2n})$. This defines a singular flat Hermitian metric 
\[ \vert\frac{a_{2n}}{s_D} \vert \oplus \pi_{n*}(h_3 \vert \partial \pi_n \vert^{-1})
\]  on $(\hat{E},\hat{\Phi})$, such that the Higgs field is normal and which is compatible with the singular orthogonal structure. The Hecke modification at $Z(a_{2n})$ desingularizes the orthogonal structure. The induced Hermitian metric on $(E,\Phi,\omega)$ solving the decoupled Hitchin equation is singular at $Z(a_{2n}) \cup \supp (B) = Z(\disc_{\sp}(a_2, \dots, a_{2n} ))$. 

\end{proof}

\begin{rema} In the previous proof, we used Lemma \ref{lemm:so:kernel_Hecketrafo} stating that we can reconstruct a $\SO(2n+1,\C)$-Higgs bundle in a unique way from 
\[ (\hat{E},\hat{\Phi})=\left( \Ker( \Phi ) \oplus \Ker(\Phi)^\perp, 0 \oplus \Phi \rest_{\Ker(\Phi)^\perp}\right)
\] through Hecke modification. For a $\SO(3,\C)$-Higgs bundle this gives another way to construct a solution to the decoupled Hitchin equation. An easy, but tedious computation using the local models of Theorem \ref{lim:theo:sol_dec_hit} shows that $h_{dc}$ as constructed in Theorem \ref{theo:SO(3)_lim_config} is equal to the solution of the decoupled Hitchin equation obtain from the singular flat Hermitian metric 
\[ \vert\frac{a_{2}}{s_D} \vert \oplus h_{dc} \rest_{\Ker(\Phi)^{\perp}}
\] on $(\hat{E},\hat{\Phi})$ in this way. This shows that the construction of solutions to the decoupled Hitchin equation in the previous proof is consistent.
\end{rema}

Similar to the symplectic case, the approximation of the local models of these solutions of the decoupled $\SO(2n+1,\C)$-Hitchin equation follows from the work of Mochizuki \cite{Mo16} and Fredrickson \cite{Fr18b}. Applying the same argument as in the proof of Theorem \ref{theo:SO(3)_lim_config} to the solutions of the rescaled Hitchin equation in Theorem \ref{theo:approx:Sl(2)}, one shows that the solutions to the decoupled $\SO(3,\C)$-Hitchin equation are limiting metrics. In particular, the local models of the solution to the decoupled Hitchin equation for $\SO(2n+1,\C)$ are approximated. At the branch points of $\pi_n: \Sigma/\sigma \rightarrow X$ the Higgs bundle looks like to copies of a $\SL(n,\C)$-Higgs bundle interchanged by $\omega$ plus the kernel. So the local models are described and approximated by the work of \cite{Fr18b}. This leads us to analogue of Conjecture \ref{conj:limit:config} for $\SO(2n+1,\C)$.

\begin{conj} The solutions to the decoupled Hitchin equation for $\SO(2n+1,\C)$ constructed in Theorem \ref{theo:hdc_SO(2n+1)} are limiting metrics. 
\end{conj}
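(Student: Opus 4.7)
The plan is to reduce the conjecture, via Theorem \ref{theo:so:biholo}, to a limiting-metric statement for the corresponding $\SO(3,\C)$-Higgs bundle $(E_3,\Phi_3,\omega_3)$ on $\Sigma/\sigma$, and then to promote that statement to $\SL(2,\C)$ through the adjoint representation used in the proof of Theorem \ref{theo:SO(3)_lim_config}. Concretely, $(E_3,\Phi_3,\omega_3)$ lifts to a $\pi_n^*K$-twisted $\GL(2,\C)$-Higgs bundle $(E',\Phi')$ on $\Sigma/\sigma$, and Mochizuki's Theorem \ref{theo:approx:Sl(2)} (applied in the fixed-determinant form of Remark \ref{rem:fixdetcase}) gives $C^\infty$-convergence on compact subsets of $(\Sigma/\sigma) \setminus Z(b_2)$ of the rescaled $\SL(2,\C)$-solutions $h_{2,t}$ to the hermitian metric constructed in Theorem \ref{lim:theo:sol_dec_hit}. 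Since the construction of an $\SO(3,\C)$-reduction from a $\GL(2,\C)$-metric via the adjoint representation is smooth and equivariant, this descends to an analogous $C^\infty$-approximation of the $\SO(3,\C)$-solution $h_3$ of Theorem \ref{theo:SO(3)_lim_config}.

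Second, I would transport this convergence on $\Sigma/\sigma$ to $X$ through the pushforward-plus-Hecke-modification that produces $h_{dc}$ in the proof of Theorem \ref{theo:hdc_SO(2n+1)}. Away from the branch locus of $\pi_n$ and from $Z(a_{2n})$ the pushforward is a local product and the Hecke modification is trivial, so $C^\infty$-convergence passes over immediately. The non-trivial local models are of two kinds. At a branch point of $\pi_n$ lying above $X \setminus Z(a_{2n})$, the Higgs bundle locally splits as two copies of a smooth $\SL(n,\C)$-spectral picture interchanged by $\omega$, together with the kernel line summand; here the local approximation is covered by \cite{Fr18b} Section 4.1 applied to each factor. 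At a point $x \in Z(a_{2n})$, after the reduction to $\SO(3,\C)$ on $\Sigma/\sigma$ the spectral curve is locally of the form $\lambda^2 - z^k = 0$, which is precisely the $\SL(2,\C)$-type singularity analysed in \cite{Mo16} Section 3.

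The hard part is combining these local approximations into a global convergence statement for the honest $\SO(2n+1,\C)$-Hitchin equation on $X$. The strategy I would follow is the one pioneered in \cite{MSWW16} and adapted in \cite{Fr18b}: construct an approximate solution $h_t^{\mathrm{app}}$ by gluing the non-singular global model $h_{dc}$ on $X \setminus Z(\disc_{\so}(\underline{a}))$ to the explicit local approximations of Fredrickson at the smooth branch points and of Mochizuki at the zeros of $a_{2n}$ via smooth cut-off functions; show that $h_t^{\mathrm{app}}$ satisfies the rescaled Hitchin equation up to an error that decays as $t \to \infty$ in suitable weighted Sobolev norms; and then run a contraction-mapping argument to perturb $h_t^{\mathrm{app}}$ to a genuine solution $h_t$ with $h_t \to h_{dc}$ in $C^\infty$ on compact subsets of $X \setminus Z(\disc_{\so}(\underline{a}))$. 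The principal technical obstacle, and presumably the reason the statement is only conjectural in the paper, will be the simultaneous presence of two qualitatively different singular loci---smooth branch points of $\pi_n$ and zeros of $a_{2n}$ on the zero section---which forces one to build a single weighted functional-analytic framework in which the linearised Hitchin operator is uniformly invertible as $t \to \infty$; this step pushes beyond what is written down in either \cite{Fr18b} or \cite{Mo16}.
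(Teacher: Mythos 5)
The statement you are addressing is a \emph{conjecture} in the paper: no proof is given there, only the motivating discussion immediately preceding it (reduction to the $\SO(3,\C)$ case on $\Sigma/\sigma$ via Theorem \ref{theo:so:biholo} and the adjoint representation, local models at the zeros of $a_{2n}$ covered by Mochizuki's analysis of $\lambda^2-z^k=0$ singularities, and local models at the smooth branch points of $\pi_n$ covered by Fredrickson's Section 4.1). Your first two paragraphs reproduce exactly this evidence, so to that extent you are aligned with the paper. The rank-one input you invoke --- that the decoupled $\SO(3,\C)$-metric of Theorem \ref{theo:SO(3)_lim_config} is a limiting metric, obtained by pushing Mochizuki's Theorem \ref{theo:approx:Sl(2)} through the adjoint representation in its fixed-determinant form --- is also the argument the paper sketches, and is sound.

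The genuine gap is the one you yourself flag in your final paragraph, and it is not closed by your proposal. The limiting-metric property does not transport through $\pi_{n*}$: if $h_{3,t}$ solves the rescaled Hitchin equation for $(E_3,\Phi_3,\omega_3)$ on $\Sigma/\sigma$, the induced metric $\pi_{n*}(h_{3,t}\vert\partial\pi_n\vert^{-1})$ (corrected by the Hecke modification) does \emph{not} solve the rescaled Hitchin equation for $(E,\Phi,\omega)$ on $X$, because the curvature term and the commutator term do not commute with pushforward across the ramification divisor, where the conformal factor $\vert\partial\pi_n\vert^{-1}$ is singular. Only the decoupled equation (flatness plus normality away from a divisor) survives the pushforward, which is precisely what Theorem \ref{theo:hdc_SO(2n+1)} establishes. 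Consequently one cannot borrow the global convergence statement from the base curve $\Sigma/\sigma$; one must construct approximate solutions directly on $X$, glue the two qualitatively different local models (Mochizuki-type at $Z(a_{2n})$, Fredrickson-type at $\pi_n(\supp B)$) to the flat model on the complement, and prove uniform invertibility of the linearised operator in a single weighted framework as $t\to\infty$. That analysis exists in neither \cite{Mo16} nor \cite{Fr18b} and is not supplied in your proposal, so what you have written is a plausible programme rather than a proof --- which is consistent with the paper leaving the statement as a conjecture.
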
 

\begin{rema} In the light of the Langlands duality of Hitchin systems, it would be interesting to compute, whether the induced $L^2$-metrics on $\Hit_{\Sp(2n,\C)}^{-1}(B^{\reg})$ and $\Hit_{\SO(2n+1,\C)}^{-1}(B^{\reg})$ define the semi-flat hyperkähler metrics associated to the Langlands dual algebraically completely integrable systems.  We hope to come back to this question in the future.
\end{rema}

\subsection{Smooth trivialisation of semi-abelian spectral data}

In Section \ref{sec:sp:semi-abel} and \ref{sec:langlands}, we stratified the $\sl(2)$-type Hitchin fibres by fibre bundles over abelian torsors. Using the solutions to the Hermitian-Einstein equation discussed above, we can prove that all these fibre bundles are smoothly trivial.
 
\begin{proof}[Proof of Theorem \ref{theo:fibrebdle_trivial}] We only need to show the triviality in the $\SL(2,\C)$-case. Then it follows in all other cases by the identification of $\sl(2)$-type Hitchin fibres with fibres of an $\SL(2,\C)$- resp. $\PGL(2,\C)$-Hitchin map.
In the proof of Theorem \ref{lim:theo:sol_dec_hit}, we saw that a solution to the Hermitian-Einstein equation $h_L$ on the eigen line bundle $L \in \Prym_{K_X^{-1}(D)}(\tilde{\pi})$ with respect to some auxiliary parabolic structure induces local frames $s$ at $p \in \tilde{\pi}^{-1}Z(a_2)$, such that $h_L=\vert z \vert^{2\alpha_p}$. These frames are unique up to multiplying by a constant and therefore define unique $u$-coordinates at all $p \in Z(\tilde{\pi}^*a_2)$ (see \cite{Ho1} Proposition 5.8). $h_L$ depends smoothly on $L \in \Prym_{K_X^{-1}(D)}(\tilde{\pi})$ (see \cite{MSWW19} Proposition 3.3). Furthermore, the choice of $s$ depends smoothly on $h_L$ by the explicit argument in \cite{Fr18b} Proposition 3.5.  Hence, this defines a smooth trivialisation in the $\SL(2,\C)$-case and hence in all other cases. 
\end{proof}

\printbibliography

\end{document}